\newtheorem*{proposition}{Proposition}
\newtheorem*{remark}{Remark}
\newtheorem*{proof}{Proof}
\begin{document}
\title{Tulczyjew's Triplet for Lie Groups II: Dynamics}

\author{O\u{g}ul Esen and Hasan G\"{u}mral}
\maketitle
\begin{center}
Department of Mathematics, Yeditepe University

34755 Kay\i \c{s}da\u{g}\i , \.{I}stanbul, Turkey

oesen@yeditepe.edu.tr \ \ \ \ hgumral@yeditepe.edu.tr

\bigskip
\end{center}

\begin{abstract}
Taking configuration space as a Lie group, the trivialized
Euler-Lagrange and Hamilton's equations are obtained and presented
as Lagrangian submanifolds of the trivialized Tulczyjew's symplectic
space. Euler-Poincar\'{e} and Lie-Poisson equations are presented as
Lagrangian submanifolds of the reduced Tulczyjew's symplectic space.
Tulczyjew's generalized Legendre transformations for trivialized and
reduced dynamics are constructed.\\
\textbf{Key words} Trivialized Euler-Lagrange equations, trivialized
Hamilton's equations, Euler-Poincar\'{e} equations, Lie-Poisson
equations, Morse families, Tulczyjew's triplet, Legendre
transformation, Lagrangian
submanifold, diffeomorphisms group.\\ MSC2000; Primary: 70H03, 70H05, 22E60; Secondary: 22E70, 37K65.
\end{abstract}

\section{Introduction}

Let $\mathcal{Q}$ be the configuration space of a mechanical system.
The two wings of the Tulczyjew triplet
\begin{equation}
\xymatrix{T^{\ast }T\mathcal{Q}
\ar[dr]^{\pi_{T\mathcal{Q}}}&&TT^{\ast
}\mathcal{Q}\ar[dl]^{T\pi_{\mathcal{Q}}}
\ar[rr]^{\Omega_{T^{\ast}\mathcal{Q}}^{\flat}}
\ar[dr]_{\tau_{T^{\ast }\mathcal{Q}}}
\ar[ll]_{\alpha_{\mathcal{Q}}}&&T^{\ast }T^{\ast
}\mathcal{Q}\ar@<1ex>[dl]^{\pi _{T^{\ast}\mathcal{Q}}}
\\&T\mathcal{Q}\ar@<1ex>[ul]^{dL}
&&T^{\ast}\mathcal{Q}\ar[ur]^{-dH}} \label{CTT}
\end{equation}%
defines two different special symplectic structures for Tulczyjew's
symplectic space $TT^{\ast }\mathcal{Q}$. A Lagrangian $L$ on
$T\mathcal{Q}$ (or a Hamiltonian $H$ on $T^{\ast }\mathcal{Q}$)
generates a Lagrangian
submanifold $\mathcal{S}_{TT^{\ast }\mathcal{Q}}$ of $TT^{\ast }\mathcal{Q}$%
. Legendre transformation is, then, a transformation between
realizations of the same Lagrangian submanifold with two different
functions, whose Hessians may be degenerate, and special symplectic
structures.

In \cite{EsGu14a}, the right and left global trivializations of
Tulczyjew's
triplet (\ref{CTT}) were adapted for Lie groups%

\begin{equation}
\xymatrix{^{1}T^{\ast }TG\ar[dr]_{^{1}\pi _{G\circledS
\mathfrak{g}}}&&^{1}TT^{\ast
}G\ar[ll]_{^{1}\bar{\sigma}_{G}}\ar[rr]^{^{1}\Omega _{G\circledS
\mathfrak{g}^{\ast }}^{\flat }}\ar[dl]^{^{1}T\pi
_{G}}\ar[dr]_{^{1}\tau _{G\circledS \mathfrak{g}^{\ast
}}}&&^{1}T^{\ast }T^{\ast }G\ar[dl]^{^{1}\pi _{G\circledS
\mathfrak{g}}} \\ &G\circledS \mathfrak{g}&&G\circledS
\mathfrak{g}^{\ast }}  \label{TTT}
\end{equation}
where $\mathfrak{g}$ is Lie algebra of the group $G$,
$\mathfrak{g}^{\ast }$ is the dual of $\mathfrak{g}$, the
superscript $1$ denotes the global trivialization of the first kind
that lifts the Lie group action to iterated bundles.

In this work, we shall study the Lagrangian dynamics on the global
trivialization $TG\simeq G\circledS \mathfrak{g}$ and the
Hamiltonian dynamics on $T^{\ast }G\simeq G\circledS
\mathfrak{g}^{\ast }$. We shall present trivialized Euler-Lagrange
and trivialized Hamilton's equations as Lagrangian submanifolds of
the trivialized Tulczyjew's symplectic space $\ ^{1}TT^{\ast }G$. We
shall then obtain Legendre and inverse Legendre transformations and,
arrive at Morse families on $G\circledS \left(
\mathfrak{g}\times \mathfrak{g}^{\ast }\right) $ with fibrations over $%
\mathfrak{g}^{\ast }$ and $\mathfrak{g}$, respectively.

For right invariant Lagrangian and Hamiltonian dynamics, we shall
present Euler-Poincar\'{e} and Lie-Poisson dynamics as Lagrangian
submanifolds of
the reduced Tulczyjew's symplectic space $\mathfrak{z}_{d}=\mathcal{O}%
_{\lambda }\times \mathfrak{g}^{\ast }\times \mathfrak{g}$ of the
reduced trivialized triplet
\begin{equation}
\xymatrix{\mathcal{O}_{\lambda }\times \mathfrak{g}\times
\mathfrak{g}^{\ast }\ar[dr]_{^{1}\pi _{G\circledS
\mathfrak{g}}^{G\backslash}} &&\mathcal{O}_{\lambda }\times
\mathfrak{g}^{\ast }\times
\mathfrak{g}\ar[ll]_{^{1}\bar{\sigma}_{G}^{G\backslash }}
\ar[rr]^{^{1}\Omega _{G\circledS \mathfrak{g}}^{G\backslash }}
\ar[dl]^{^{1}T\pi _{G}^{G\backslash }}\ar[dr]_{^{1}\tau _{G\circledS
\mathfrak{g}^{\ast }}^{G\backslash }}&&\mathcal{O}_{\lambda }\times
\mathfrak{g}^{\ast }\times \mathfrak{g}\ar[dl]^{^{1}\pi _{G\circledS
\mathfrak{g}^{\ast }}^{G\backslash
}}\\&\mathfrak{g}&&\mathfrak{g}^{\ast}&&}  \label{ReTT}
\end{equation}%
which will be achieved by means of the Lagrange-Dirac derivative $\mathfrak{d%
}l:\mathfrak{g\rightarrow z}_{d}$ and the Hamilton-Dirac derivative $%
\mathfrak{d}h:\mathfrak{g\rightarrow z}_{d}$ on the reduced
Lagrangian and
Hamiltonian functions, respectively. The peculiarity of the diagram (\ref%
{ReTT}) is that the right and left wings are not special symplectic
structures in the usual classical sense. We shall replace
$\mathfrak{z}_{l}$
with $T^{\ast }\mathfrak{g}$ and $\mathfrak{z}_{h}$ with $T^{\ast }\mathfrak{%
g}^{\ast }\ $to solve this problem. It turns out that, in the
reduced
diagram (\ref{ReTT}), the Morse families \ are defined on $\mathfrak{g}%
\times \mathfrak{g}^{\ast }$.

In the next section, we shall briefly review the Legendre
transformation in the sense of Tulczyjew, and recall the definitions
of special symplectic structures and Morse families. In section $3$,
we shall derive trivialized Euler-Lagrange, trivialized Hamilton's,
Euler-Poincar\'{e} and Lie-Poisson equations. In section $4$,
geometry of the trivialized Tulczyjew's triplet in diagram
(\ref{TTT}) will be summarized. We shall represent trivialized
Euler-Lagrange and trivialized Hamilton's equations as Lagrangian
submanifolds of $\ ^{1}TT^{\ast }G$. Legendre transformations of
trivialized dynamics will be established. In section $5$, we shall
start with the
reduced Tulczyjew's triplet in diagram (\ref{ReTT}) and present Euler-Poincar%
\'{e} and Lie-Poisson dynamics as Lagrangian submanifolds of $\mathfrak{z}%
_{d}$. We shall then establish the Legendre transformations of
reduced dynamics. In the last section, we shall present the example
where $G$ is the group of diffeomorphisms on a manifold.
\newpage
\section{Tulczyjew's Construction of the Legendre Transformation}

\subsection{Special Symplectic Structures}

Let $\mathcal{P}$ be a symplectic manifold carrying an exact
symplectic two form $\Omega _{\mathcal{P}}=d\vartheta
_{\mathcal{P}}$. A special symplectic
structure is a quintuple $(\mathcal{P},\pi _{\mathcal{M}}^{\mathcal{P}},%
\mathcal{M},\vartheta _{\mathcal{P}},\chi )$ where $\pi _{\mathcal{M}}^{%
\mathcal{P}}:\mathcal{P}\rightarrow \mathcal{M}$ is a fibre bundle
and $\chi :\mathcal{P}\rightarrow T^{\ast }\mathcal{M}$ is a fiber
preserving
symplectic diffeomorphism such that $\chi ^{\ast }\theta _{T^{\ast }\mathcal{%
M}}=\vartheta _{\mathcal{P}}$ for $\theta _{T^{\ast }\mathcal{M}}$
being the canonical one-form on $T^{\ast }\mathcal{M}$. $\chi $ can
be characterized uniquely by the condition
\begin{equation*}
\left\langle \chi (p),X^{\mathcal{M}}(x)\right\rangle =\left\langle
\vartheta _{\mathcal{P}}(p),X^{\mathcal{P}}(p)\right\rangle
\end{equation*}%
for each $p\in \mathcal{P}$, $\pi _{\mathcal{M}}^{\mathcal{P}}(p)=x$
and for vector fields $X^{\mathcal{M}}$ and $X^{\mathcal{P}}$
satisfying $\left( \pi _{\mathcal{M}}^{\mathcal{P}}\right) _{\ast
}X^{\mathcal{P}}=X^{\mathcal{M}}$ \cite{LaSnTu75, SnTu73, Tu80}. A
real valued function $F$ on the base manifold $\mathcal{M}\mathbb{\
}$defines a Lagrangian submanifold
\begin{equation}
\mathcal{S}_{\mathcal{P}}=\left\{ p\in \mathcal{P}:d\left( F\circ \pi _{%
\mathcal{M}}^{\mathcal{P}}\right) (p)=\vartheta _{\mathcal{P}}\left(
p\right) \right\}  \label{LagSubSSS}
\end{equation}%
of the underlying symplectic manifold $(\mathcal{P},\Omega _{\mathcal{P}%
}=d\vartheta _{\mathcal{P}})$. The function $F$ together with a
special
symplectic structure $(\mathcal{P},\pi _{\mathcal{M}}^{\mathcal{P}},\mathcal{%
M},\vartheta _{\mathcal{P}},\chi )$ are called a generating family
for the Lagrangian submanifold $\mathcal{S}_{\mathcal{P}}$. Since
$\chi $ is a symplectic diffeomorphism, it maps
$\mathcal{S}_{\mathcal{P}}$ to the image space $im\left( dF\right) $
of the exterior derivative of $F$, which is a Lagrangian submanifold
of $T^{\ast }\mathcal{M}$.

\subsection{Morse Families}

Let $\left( \mathcal{P},\pi
_{\mathcal{M}}^{\mathcal{P}},\mathcal{M}\right) $ be a fibre bundle.
The vertical bundle $V\mathcal{P}$ over $\mathcal{P}$ is
the space of vertical vectors $U\in T\mathcal{P}$ satisfying $T\pi _{%
\mathcal{M}}^{\mathcal{P}}\left( U\right) =0$. The conormal bundle
of $VP$ is defined by
\begin{equation*}
V^{0}\mathcal{P}=\left\{ \alpha \in T^{\ast
}\mathcal{P}:\left\langle \alpha ,U\right\rangle =0,\forall U\in
V\mathcal{P}\right\} .
\end{equation*}%
Let $E$ be a real-valued function on $\mathcal{P}$, then the image
$im\left( dE\right) $ of its exterior derivative is a subspace of
$T^{\ast }\mathcal{P} $. We say that $E$ is a Morse family (or an
energy function) if
\begin{equation}
T_{z}im\left( dE\right) +T_{z}V^{0}\mathcal{P}=TT^{\ast
}\mathcal{P}, \label{MorseReq}
\end{equation}%
for all $z\in im\left( dE\right) \cap V^{0}\mathcal{P}$, \cite{Be10,
LiMa, Tu80, TuUr96, TuUr99, We77}. In local coordinates $\left(
x^{a},r^{i}\right) $ on the total space $\mathcal{P}$ induced from
the coordinates $\left( x^{a}\right) $ on $\mathcal{M}$, the
requirement in Eq.(\ref{MorseReq})
reduces to the condition that the rank of the matrix%
\begin{equation*}
\left( \frac{\partial ^{2}E}{\partial x^{a}\partial x^{b}}\text{ \ \ }\frac{%
\partial ^{2}E}{\partial x^{a}\partial r^{i}}\right)
\end{equation*}%
be maximal. A Morse family $E$ on the smooth bundle $\left(
\mathcal{P},\pi _{\mathcal{M}}^{\mathcal{P}},\mathcal{M}\right) $
generates an immersed Lagrangian submanifold
\begin{equation}
\mathcal{S}_{T^{\ast }\mathcal{M}}=\left\{ \lambda _{M}\in T^{\ast }\mathcal{%
M}:T^{\ast }\pi _{\mathcal{M}}^{\mathcal{P}}(\lambda _{\mathcal{M}%
})=dE\left( p\right) \right\}  \label{LagSub}
\end{equation}%
of $\left( T^{\ast }\mathcal{M},\Omega _{T^{\ast
}\mathcal{M}}\right) $. Note that, in the definition of
$\mathcal{S}_{T^{\ast }\mathcal{M}}$, there is an intrinsic
requirement that $\pi _{\mathcal{M}}^{\mathcal{P}}\left( p\right)
=\pi _{T^{\ast }\mathcal{M}}\left( \lambda _{\mathcal{M}}\right) $.

\subsection{The Legendre Transformation}

Let $\left( \mathcal{P},\Omega _{\mathcal{P}}=d\vartheta _{\mathcal{P}%
}\right) $ be an exact symplectic manifold, and $(\mathcal{P},\pi _{\mathcal{%
M}}^{\mathcal{P}},\mathcal{M},\vartheta _{\mathcal{P}},\chi )$ be a
special symplectic structure. A function $F$ on $\mathcal{M}$
defines a Lagrangian submanifold $\mathcal{S}_{\mathcal{P}}\subset
\mathcal{P}$ as described in Eq.(\ref{LagSubSSS}). If
$\mathcal{S}_{\mathcal{P}}=im\left( \Upsilon
\right) $ is the image of a section $\Upsilon $ of $\left( \mathcal{P},\pi _{%
\mathcal{M}}^{\mathcal{P}},\mathcal{M}\right) $ then we have $\chi
\circ
\Upsilon =dF$. Assume that $(\mathcal{P},\pi _{\mathcal{M}^{\prime }}^{%
\mathcal{P}},\mathcal{M}^{\prime },\vartheta _{\mathcal{P}}^{\prime
},\chi ^{\prime })$ is another special symplectic structure
associated to the underlying symplectic space $\left(
\mathcal{P},\Omega _{\mathcal{P}}\right) $. Then, from the diagram

\begin{equation}
\xymatrix{T^{\ast }\mathcal{M} \ar[dr]^{\pi_{\mathcal{M}}}
&&\mathcal{P} \ar[dl]^{\pi^{\mathcal{P}}_{\mathcal{M}}}
\ar[rr]^{\chi^{\prime }}
\ar[dr]^{\pi^{\mathcal{P}}_{\mathcal{M}^{\prime }}}
\ar[ll]_{\chi}&&T^{\ast }\mathcal{M}^{\prime }
\ar@<1ex>[dl]^{\pi_{\mathcal{M}^{\prime }}}
\\&\mathcal{M}
\ar@<1ex>[ul]^{dF} \ar@<1ex>[ur]^{\Upsilon} &&\mathcal{M}^{\prime}
\ar[ur]^{dF^{\prime }} \ar@<1ex>[ul]^{\Upsilon^{\prime}} }
\label{TTG}
\end{equation}%
it follows that the difference $\vartheta _{\mathcal{P}}-\vartheta _{%
\mathcal{P}}^{\prime }$ of one-forms must be closed in order to satisfy $%
\Omega _{\mathcal{P}}=d\vartheta _{\mathcal{P}}=d\vartheta _{\mathcal{P}%
}^{\prime }$. When the difference is exact, there exists a function
$\Delta $
on $P$ satisfying $d\Delta =\vartheta _{\mathcal{P}}-\vartheta _{\mathcal{P}%
}^{\prime }$. If $\mathcal{S}_{\mathcal{P}}$ is the image of a section $%
\Upsilon ^{\prime }$ of the fibration $\left( \mathcal{P},\pi _{\mathcal{M}%
^{\prime }}^{\mathcal{P}},\mathcal{M}^{\prime }\right) $, then the
function
\begin{equation}
F^{\prime }=\left( F\circ \pi _{\mathcal{M}}^{\mathcal{P}}+\Delta
\right) \circ \Upsilon ^{\prime }
\end{equation}%
generates the Lagrangian submanifold $\mathcal{S}_{\mathcal{P}}$
\cite{Tu77, TuUr96, TuUr99}. This is the Legendre transformation.
If, finding a global section $\Upsilon ^{\prime }$ of $\pi
_{\mathcal{M}^{\prime }}^{\mathcal{P}}$ satisfying $im(\Upsilon
^{\prime })=\mathcal{S}_{\mathcal{P}}$ is not possible, the Legendre
transformation is not immediate. In this case, define the Morse
family
\begin{equation}
E=F\circ \pi _{\mathcal{M}}^{\mathcal{P}}+\Delta  \label{energy}
\end{equation}%
on a smooth subbundle of $\left( \mathcal{P},\pi _{\mathcal{M}^{\prime }}^{%
\mathcal{P}},\mathcal{M}^{\prime }\right) $, where $E$ satisfies the
requirement (\ref{MorseReq}) of being a Morse family. Then, $E$
generates a
Lagrangian submanifold $\mathcal{S}_{T^{\ast }\mathcal{M}^{\prime }}$ on $%
T^{\ast }\mathcal{M}^{\prime }$ as described in Eq.(\ref{LagSub}).
The inverse of $\chi ^{\prime }$ maps $\mathcal{S}_{T^{\ast
}\mathcal{M}^{\prime
}}$ to $\mathcal{S}_{\mathcal{P}}$ bijectively, that is $\mathcal{S}_{%
\mathcal{P}}=\left( \chi ^{\prime }\right) ^{-1}\left( \mathcal{S}_{T^{\ast }%
\mathcal{M}^{\prime }}\right) $.

\subsection{The Classical Tulczyjew's Triplet}

In this section, we will choose the symplectic manifold $\left( \mathcal{P}%
,\Omega _{\mathcal{P}}=d\vartheta _{\mathcal{P}}\right) $, in the diagram (%
\ref{TTG}), to be the Tulczyjew's symplectic space $\left( TT^{\ast }%
\mathcal{Q},\Omega _{TT^{\ast }\mathcal{Q}}\right) $. Here, $\Omega
_{TT^{\ast }\mathcal{Q}}$ is the symplectic two-form with two
potential one-forms $\vartheta _{1}$ and $\vartheta _{2}$ obtained
by derivations of canonical one-form $\theta _{T^{\ast
}\mathcal{Q}}$ and the symplectic-two-form $\Omega _{T^{\ast
}\mathcal{Q}}$ on $T^{\ast }\mathcal{Q} $, respectively. The
resulting special symplectic structures
\begin{equation}
\xymatrix{T^{\ast }T\mathcal{Q}
\ar[dr]^{\pi_{T\mathcal{Q}}}&&TT^{\ast
}\mathcal{Q}\ar[dl]^{T\pi_{\mathcal{Q}}}
\ar[rr]^{\Omega_{T^{\ast}\mathcal{Q}}^{\flat}}
\ar[dr]_{\tau_{T^{\ast }\mathcal{Q}}}
\ar[ll]_{\alpha_{\mathcal{Q}}}&&T^{\ast }T^{\ast
}\mathcal{Q}\ar@<1ex>[dl]^{\pi _{T^{\ast}\mathcal{Q}}}
\\&T\mathcal{Q}\ar@<1ex>[ul]^{dL}
&&T^{\ast}\mathcal{Q}\ar[ur]^{-dH}} \label{T}
\end{equation}%
where, the musical isomorphism $\Omega _{T^{\ast
}\mathcal{Q}}^{\flat }$ is induced from $\Omega _{T^{\ast
}\mathcal{Q}},$ and $\alpha _{\mathcal{Q}}$ is a diffeomorphism
constructed as a \textit{dual} of canonical involution of
$TT\mathcal{Q}$. They satisfy
\begin{equation}
\left( \Omega _{T^{\ast }\mathcal{Q}}^{\flat }\right) ^{\ast }\theta
_{T^{\ast }T^{\ast }\mathcal{Q}}=\vartheta _{1},\text{ \ \ }\alpha _{%
\mathcal{Q}}^{\ast }\theta _{T^{\ast }T\mathcal{Q}}=\vartheta _{2},
\label{pb2}
\end{equation}%
where $\theta _{T^{\ast }T^{\ast }\mathcal{Q}}$ and $\theta _{T^{\ast }T%
\mathcal{Q}}$ canonical one-forms on the cotangent bundles $T^{\ast
}T^{\ast }\mathcal{Q}$ and $T^{\ast }T\mathcal{Q}$, respectively.

The generalized Legendre transformation of Lagrangian dynamics on
the tangent bundle $T\mathcal{Q}$ can now be constructed as
follows:\ First, present the dynamics as the Lagrangian submanifold
of the Tulczyjew's symplectic space $TT^{\ast }\mathcal{Q}$. Take
the image $im\left( dL\right) $ of exterior derivative $dL$ which is
a Lagrangian submanifold of $T^{\ast
}T\mathcal{Q}$. Map $im\left( dL\right) $ by the symplectic diffeomorphism $%
\alpha _{\mathcal{Q}}$ to a Lagrangian submanifold $\mathcal{S}_{TT^{\ast }%
\mathcal{Q}}$ of $TT^{\ast }\mathcal{Q}$. Alternatively, use the
equality
\begin{equation}
\left( T\pi _{\mathcal{Q}}\right) ^{\ast }dL=\vartheta _{2}
\label{LSD}
\end{equation}%
to obtain $\mathcal{S}_{TT^{\ast }\mathcal{Q}}$. Next, generate the
same Lagrangian submanifold $\mathcal{S}_{TT^{\ast }\mathcal{Q}}$
from the right wing (the Hamiltonian side) of the triplet (\ref{T}).
To achieve this, use a Morse family $E^{L\rightarrow H}$ defined on
the Pontryagin bundle
\begin{equation*}
P\mathcal{Q}=T\mathcal{Q}\times _{\mathcal{Q}}T^{\ast }\mathcal{Q}
\end{equation*}%
over $T^{\ast }\mathcal{Q}$. The Morse family $E^{L\rightarrow H}$
defines a
Lagrangian submanifold $\mathcal{S}_{T^{\ast }T^{\ast }\mathcal{Q}}$ of $%
T^{\ast }T^{\ast }\mathcal{Q}$. The symplectic diffeomorphism
$\Omega
_{T^{\ast }\mathcal{Q}}^{\flat }$ maps $\mathcal{S}_{T^{\ast }T^{\ast }%
\mathcal{Q}}$ to the Lagrangian submanifold $\mathcal{S}_{TT^{\ast }\mathcal{%
Q}}$ obtained by means of the Lagrangian function $L$. This
completes the construction of the Legendre transformation.

For a non-degenerate Lagrangian, the Morse family $E^{L\rightarrow H}$ on $P%
\mathcal{Q}$ can be reduced to a Hamiltonian function $H$ on $T^{\ast }%
\mathcal{Q}$. For degenerate cases, a reduction of the total space $P%
\mathcal{Q}$ to a subbundle larger than $T^{\ast }\mathcal{Q}$ is
possible depending on degeneracy level of Lagrangian function
\cite{Be10}.

The inverse Legendre transformation, that is to find a Lagrangian
formulation of a Hamiltonian system, can be done pursuing the same
understanding. The musical isomorphism $\Omega _{T^{\ast }\mathcal{Q}%
}^{\flat }$ maps the image $-im\left( dH\right) $ of exterior
derivative of
a Hamiltonian $H$ on $T^{\ast }\mathcal{Q}$ to a Lagrangian submanifold $%
\mathcal{S}_{TT^{\ast }\mathcal{Q}}^{\prime }$ of the Tulczyjew's
symplectic space $TT^{\ast }\mathcal{Q}$. $\mathcal{S}_{TT^{\ast
}\mathcal{Q}}^{\prime } $ can either be defined by the equality
\begin{equation}
-\left( \tau _{T^{\ast }\mathcal{Q}}\right) ^{\ast }dH=\vartheta
_{1}, \label{HSD}
\end{equation}%
or as the image of Hamiltonian vector field $-X_{H}$. The inverse
Legendre
transformation of the dynamics is meant to generate $\mathcal{S}_{TT^{\ast }%
\mathcal{Q}}^{\prime }$ by a generating family over the tangent bundle $T%
\mathcal{Q}$. This can be done with a Morse family $E^{H\rightarrow
L}$ on the Pontryagin
bundle $P\mathcal{Q}$ with fibration over $T\mathcal{Q}$. The Morse family $%
E^{H\rightarrow L}$ defines a Lagrangian submanifold $\mathcal{S}_{T^{\ast }T%
\mathcal{Q}}^{\prime }$ of $T^{\ast }T\mathcal{Q}$ and, the
symplectic
diffeomorphism $\alpha _{\mathcal{Q}}$ maps $\mathcal{S}_{T^{\ast }T\mathcal{%
Q}}^{\prime }$ to $\mathcal{S}_{TT^{\ast }\mathcal{Q}}^{\prime }$.

In finite dimensions, introducing the coordinates $\left( \mathbf{q,p;\dot{q}%
,\dot{p}}\right) $ on $TT^{\ast }\mathcal{Q}$ induced from Darboux'
coordinates $\left( \mathbf{q,p}\right) $ on $T^{\ast }\mathcal{Q}$,
one finds the symplectomorphisms
\begin{equation*}
\alpha _{\mathcal{Q}}\left( \mathbf{q,p;\dot{q},\dot{p}}\right)
=\left(
\mathbf{q,\dot{q};\dot{p},p}\right) \text{, \ \ }\Omega _{T^{\ast }\mathcal{Q%
}}^{\flat }\left( \mathbf{q,p;\dot{q},\dot{p}}\right) =\left( \mathbf{q,p,-%
\dot{p},\dot{q}}\right) ,
\end{equation*}%
and the potential one-forms
\begin{equation}
\vartheta _{1}=\mathbf{\dot{p}}\cdot d\mathbf{q}-\mathbf{\dot{q}}\cdot d%
\mathbf{p}\text{, \ \ }\vartheta _{2}=\mathbf{\dot{p}}\cdot d\mathbf{q+p}%
\cdot d\mathbf{\dot{q},}  \label{thets}
\end{equation}%
where the difference $\vartheta _{2}-\vartheta _{1}$ is the exact one-form $%
d\left( \mathbf{p}\cdot \mathbf{\dot{q}}\right) $ on $TT^{\ast }\mathcal{Q}$%
. The Lagrangian submanifold $S_{TT^{\ast }\mathcal{Q}}$, defined in Eq.(\ref%
{LSD}), is
\begin{equation*}
\nabla _{q}L(\mathbf{q,\dot{q}})=\mathbf{\dot{p}}\text{, \ \ }\nabla _{\dot{q%
}}L(\mathbf{q,\dot{q}})=\mathbf{p},
\end{equation*}%
which can be written as a second order Euler-Lagrange equation
$d\left( \nabla _{\dot{q}}L\right) /dt=\nabla _{q}L$. The energy
function
\begin{equation*}
E^{L\rightarrow H}\left( \mathbf{q,p,\dot{q}}\right)
=\mathbf{p}\cdot \mathbf{\dot{q}}+L\left( \mathbf{q,\dot{q}}\right)
,
\end{equation*}%
satisfies the requirements, given in Eq.(\ref{MorseReq}), of being a
Morse family on the Pontryagin bundle $T\mathcal{Q}\times T^{\ast
}\mathcal{Q}$ over the cotangent bundle $T^{\ast }\mathcal{Q}$.
Hence, $E^{L\rightarrow H}$
generates a Lagrangian submanifold $\mathcal{S}_{T^{\ast }T^{\ast }\mathcal{Q%
}}$ of $T^{\ast }T^{\ast }\mathcal{Q}$ as defined in
Eq.(\ref{LagSub}). In coordinates $\left(
\mathbf{q,p,P}_{q}\mathbf{,P}_{p}\right) $ of $T^{\ast }T^{\ast
}\mathcal{Q}$, $\mathcal{S}_{T^{\ast }T^{\ast }\mathcal{Q}}$ is
given by
\begin{equation*}
\mathbf{P}_{q}=\nabla _{q}E^{L\rightarrow H}=\nabla _{q}L\text{, \ \ }%
\mathbf{P}_{p}=\nabla _{p}E^{L\rightarrow H}=\mathbf{\dot{q}}\text{, \ \ }%
\mathbf{0}=\nabla _{\dot{q}}E^{L\rightarrow H}=\mathbf{p}-\nabla _{\dot{q}}L%
\mathbf{.}
\end{equation*}%
The inverse musical isomorphism $\Omega _{T^{\ast
}\mathcal{Q}}^{\sharp }$ maps $\mathcal{S}_{T^{\ast }T^{\ast
}\mathcal{Q}}$ to $\mathcal{S}_{TT^{\ast }\mathcal{Q}}$. When the
Lagrangian function is non-degenerate, then the
Morse family reduces to the Hamiltonian function%
\begin{equation*}
H\left( \mathbf{q,p}\right) =\mathbf{p}\cdot \mathbf{\dot{q}}\left( \mathbf{%
q,p}\right) +L\left( \mathbf{q,\dot{q}}\left( \mathbf{q,p}\right)
\right)
\end{equation*}%
on $T^{\ast }\mathcal{Q}$. In coordinates $\left( \mathbf{q,p;\dot{q},\dot{p}%
}\right) $ on $TT^{\ast }\mathcal{Q}$, the Lagrangian submanifold $%
S_{TT^{\ast }\mathcal{Q}}^{\prime }$, defined in Eq.(\ref{HSD}), is
the
Hamilton's equations%
\begin{equation}
\mathbf{\dot{q}}=\nabla
_{p}{H}(\mathbf{q,p}),\;\;\;\mathbf{\dot{p}}=-\nabla
_{q}{H}(\mathbf{q,p}){.}
\end{equation}%
The Morse family
\begin{equation*}
E^{H\rightarrow L}\left( \mathbf{q,p,\dot{q}}\right)
=-\mathbf{p}\cdot \mathbf{\dot{q}}+H\left( \mathbf{q,p}\right)
\end{equation*}%
on the Pontryagin bundle $T\mathcal{Q}\times T^{\ast }\mathcal{Q}$ over $T%
\mathcal{Q}$ defines the Lagrangian submanifold $S_{T^{\ast }T\mathcal{Q}%
}^{\prime }$ of $T^{\ast }T\mathcal{Q}$. In coordinates $\left( \mathbf{q,%
\dot{q},P}_{q}\mathbf{,P}_{\dot{q}}\right) $ of $T^{\ast }T\mathcal{Q}$, $%
\mathcal{S}_{T^{\ast }T\mathcal{Q}}^{\prime }$ is given by
\begin{equation*}
\mathbf{P}_{q}=\nabla _{q}E^{H\rightarrow L}=\nabla _{q}H,\text{ \ \ }%
\mathbf{P}_{\dot{q}}=\nabla _{\dot{q}}E^{H\rightarrow
L}=\mathbf{-p},\text{ \ \ }\mathbf{0}=\nabla _{p}E^{H\rightarrow
L}=\nabla _{p}H-\mathbf{\dot{q}.}
\end{equation*}%
The inverse $\alpha _{\mathcal{Q}}^{-1}$ of the isomorphism $\alpha _{%
\mathcal{Q}}$ maps $S_{T^{\ast }T\mathcal{Q}}^{\prime }$ to $S_{TT^{\ast }%
\mathcal{Q}}^{\prime }$. When the Hamiltonian function is
non-degenerate, then the Morse family reduces to the non-degenerate
Lagrangian
\begin{equation*}
L\left( \mathbf{q,\dot{q}}\right) =-\mathbf{p}\left( \mathbf{q,\dot{q}}%
\right) \cdot \mathbf{\dot{q}}+H\left( \mathbf{q,p}\left( \mathbf{q,\dot{q}}%
\right) \right)
\end{equation*}%
on $T\mathcal{Q}$.
\newpage

\section{Dynamics on Lie Groups}
\subsection{Notations}

$G$ is a Lie group. Its Lie algebra $\mathfrak{g}\simeq T_{e}G$ is
assumed to be reflexive. The dual of $\mathfrak{g}$ is
$\mathfrak{g}^{\ast }=Lie^{\ast }\left( G\right) \simeq T_{e}^{\ast
}G$. Throughout the work, we
shall designate%
\begin{equation}
g,h\in G,\text{ \ \ }\xi ,\eta ,\zeta \in \mathfrak{g},\text{ \ \
}\mu ,\nu ,\lambda \in \mathfrak{g}^{\ast }.  \label{G}
\end{equation}%
For a tensor field which is either right or left invariant, we shall use $%
V_{g}\in T_{g}G$ or $\alpha _{g}\in T_{g}^{\ast }G$ etc... For an
arbitrary manifold $\mathcal{M}$, we shall use
\begin{equation}
u,v\in \mathcal{M}\text{, \ \ }V_{u},U_{u}\in T_{u}\mathcal{M}\text{, \ \ }%
\alpha _{u},\beta _{u},\gamma _{u}\in T_{u}^{\ast }\mathcal{M}
\label{M}
\end{equation}%
to denote vectors and one-forms over specific points. We shall
denote left and right multiplications on $G$ by $L_{g}$ and $R_{g}$,
respectively. The right inner automorphism
\begin{equation}
I_{g}=L_{g^{-1}}\circ R_{g}  \label{InnerR}
\end{equation}%
will be a right action of $G$ on $G$ satisfying $I_{g}\circ
I_{h}=I_{hg}$. The \textit{right} adjoint action $Ad_{g}=T_{e}I_{g}$
of $G$ on $\mathfrak{g} $ is defined as the tangent map of $I_{g}$
at the identity $e\in G$. The
infinitesimal \textit{right} adjoint representation $ad_{\xi }\eta $ is $%
\left[ \xi ,\eta \right] _{\mathfrak{g}}$ and it is defined as the
derivative of $Ad_{g}$ at the identity. A right invariant vector field $%
X_{\xi }^{G}$ on $G$ can be obtained by right translation
\begin{equation}
X_{\xi }^{G}\left( g\right) =T_{e}R_{g}\xi  \label{riG}
\end{equation}%
of $\xi \in \mathfrak{g}$ for each $g\in G$. The identity
\begin{equation}
\left[ \xi ,\eta \right] =\left[ X_{\xi }^{G},X_{\eta }^{G}\right]
_{JL} \label{rivf}
\end{equation}%
gives the isomorphism between $\mathfrak{g}$ and the space $\mathfrak{X}%
^{R}\left( G\right) $ of right invariant vector fields endowed with
the Jacobi-Lie bracket. The coadjoint action $Ad_{g}^{\ast }$ of $G$
on the dual $\mathfrak{g}^{\ast }$ of the Lie algebra $\mathfrak{g}$
is a right representation and is the linear algebraic dual of
$Ad_{g^{-1}}$, namely,
\begin{equation}
\left\langle Ad_{g}^{\ast }\mu ,\xi \right\rangle =\left\langle \mu
,Ad_{g^{-1}}\xi \right\rangle  \label{dist*}
\end{equation}%
holds for all $\xi \in \mathfrak{g}$ and $\mu \in \mathfrak{g}^{\ast
}$. The
infinitesimal coadjoint action $ad_{\xi }^{\ast }$ of $\mathfrak{g}$ on $%
\mathfrak{g}^{\ast }$ is the linear algebraic dual of $ad_{\xi }$.
Note that, the infinitesimal generator of the coadjoint action
$Ad_{g}^{\ast }$ is minus the infinitesimal coadjoint action
$ad_{\xi }^{\ast }$, that is, if $g^{t}\subset G$ is a curve passing
through the identity in the direction of
$\xi \in \mathfrak{g},$ then%
\begin{equation}
\left. \frac{d}{dt}\right\vert _{t=0}Ad_{g^{t}}^{\ast }\mu =-ad_{\xi
}^{\ast }\mu .  \label{Adtoad}
\end{equation}%
The\ \textit{right} trivialization maps on $TG$ and $T^{\ast }G$ are
defined to be
\begin{eqnarray}
tr_{TG}^{R} &:&TG\rightarrow G\circledS
\mathfrak{g}:U_{g}\rightarrow \left(
g,T_{g}R_{g^{-1}}U_{g}\right) , \\
tr_{T^{\ast }G}^{R} &:&T^{\ast }G\rightarrow G\circledS
\mathfrak{g}^{\ast }:\alpha _{g}\rightarrow \left( g,T_{e}^{\ast
}R_{g}\alpha _{g}\right) .
\end{eqnarray}%
We refer to \cite{EsGu14a} for further details about the right
actions and representations.

\subsection{Lagrangian dynamics}

For a Lagrangian density $L:TG\rightarrow\mathbb{R}$, define the
unique function $\bar{L}$ on $G\circledS \mathfrak{g}$ by
\begin{equation}
\bar{L}\left( g,\xi \right) =\bar{L}\circ tr_{TG}^{R}\left(
V_{g}\right) =L\left( V_{g}\right) ,  \label{Lag}
\end{equation}%
where $\xi =T_{g}R_{g^{-1}}V_{g}$. The variation of the fiber (Lie
algebra)
variable $\xi $ can be done by the reduced variational principle \cite%
{BlKrMaRa96, CeHoMaRa98, EsGu14a, HoMaRa97, HoScSt09, MaRa99,
MaRaRa91}
\begin{equation}
\delta \xi =\dot{\eta}+\left[ \xi ,\eta \right] .  \label{rvp}
\end{equation}

\begin{proposition}
A Lagrangian density $\bar{L}$ on $G\circledS \mathfrak{g}$ defines
the trivialized Euler-Lagrange dynamics
\begin{equation}
\frac{d}{dt}\frac{\delta \bar{L}}{\delta \xi }=T_{e}^{\ast }R_{g}\frac{%
\delta \bar{L}}{\delta g}+ad_{\xi }^{\ast }\frac{\delta \bar{L}}{\delta \xi }%
.  \label{preeulerlagrange}
\end{equation}

\begin{proof}
Using reduced variational principle, one computes
\begin{eqnarray*}
&&\delta \int_{b}^{a}\bar{L}\left( g,\xi \right)
dt=\int_{b}^{a}\left( \left\langle \frac{\delta \bar{L}}{\delta
g},\delta g\right\rangle _{g}+\left\langle \frac{\delta
\bar{L}}{\delta \xi },\delta \xi
\right\rangle _{e}\right) dt \\
&=&\int_{b}^{a}\left( \left\langle \frac{\delta \bar{L}}{\delta
g},\delta
g\right\rangle _{g}+\left\langle \frac{\delta \bar{L}}{\delta \xi },\dot{\eta%
}+\left[ \xi ,\eta \right] \right\rangle _{e}\right) dt \\
&=&-\left. \left\langle \frac{\delta \bar{L}}{\delta \xi },\eta
\right\rangle _{e}\right\vert _{b}^{a}+\text{ }\int_{b}^{a}\left(
\left\langle \frac{\delta \bar{L}}{\delta g},\delta g\right\rangle
_{g}+\left\langle -\frac{d}{dt}\frac{\delta \bar{L}}{\delta \xi
}+ad_{\xi }^{\ast }\frac{\delta \bar{L}}{\delta \xi },\eta
\right\rangle _{e}\right) dt
\\
&=&-\left. \left\langle \frac{\delta \bar{L}}{\delta \xi }%
,T_{g}R_{g^{-1}}\delta g\right\rangle _{e}\right\vert
_{b}^{a}+\int_{b}^{a}\left\langle \frac{\delta \bar{L}}{\delta
g},\delta
g\right\rangle _{g}+\left\langle ad_{\xi }^{\ast }\frac{\delta \bar{L}}{%
\delta \xi }-\frac{d}{dt}\frac{\delta \bar{L}}{\delta \xi }%
,T_{g}R_{g^{-1}}\delta g\right\rangle _{e}dt \\
&=&-\left. \left\langle T_{g}^{\ast }R_{g^{-1}}\frac{\delta
\bar{L}}{\delta \xi },\delta g\right\rangle _{g}\right\vert
_{a}^{b}+\int_{b}^{a}\left\langle \frac{\delta \bar{L}}{\delta g}%
+T_{g}^{\ast }R_{g^{-1}}\left( ad_{\xi }^{\ast }\frac{\delta
\bar{L}}{\delta \xi }-\frac{d}{dt}\frac{\delta \bar{L}}{\delta \xi
}\right) ,\delta g\right\rangle _{g}dt.
\end{eqnarray*}%
and the conclusion follows if $\delta g$ vanishes at boundaries.
\end{proof}
\end{proposition}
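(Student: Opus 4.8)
The plan is to obtain (\ref{preeulerlagrange}) from Hamilton's principle applied to the reduced action $\int_b^a \bar{L}(g,\xi)\,dt$, where the admissible variations are governed by the reduced variational constraint (\ref{rvp}). First I would write the first variation as a sum of a base contribution and a fibre contribution,
\begin{equation*}
\delta \int_b^a \bar{L}\,dt = \int_b^a \left( \left\langle \frac{\delta \bar{L}}{\delta g}, \delta g \right\rangle_g + \left\langle \frac{\delta \bar{L}}{\delta \xi}, \delta \xi \right\rangle_e \right) dt,
\end{equation*}
in which $\delta \bar{L}/\delta g \in T_g^\ast G$ is paired with $\delta g \in T_g G$ at $g$, while $\delta \bar{L}/\delta \xi \in \mathfrak{g}^\ast$ is paired with $\delta \xi \in \mathfrak{g}$ at the identity. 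The essential point is that $\delta \xi$ is not free but is tied to the variation of the curve through $\delta \xi = \dot{\eta} + [\xi,\eta]$.

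Next I would eliminate both the velocity and the bracket appearing in $\delta \xi$. Integrating the $\dot{\eta}$ term by parts produces an endpoint contribution together with the interior term $-\langle \tfrac{d}{dt}(\delta \bar{L}/\delta \xi),\eta\rangle_e$, and the bracket term is handled by the defining duality of the infinitesimal coadjoint action, namely $\langle \delta \bar{L}/\delta \xi,[\xi,\eta]\rangle_e = \langle ad_\xi^\ast(\delta \bar{L}/\delta \xi),\eta\rangle_e$ with the convention $ad_\xi\eta=[\xi,\eta]$ fixed in (\ref{rivf}). The fibre contribution thereby collapses into $\langle ad_\xi^\ast \tfrac{\delta \bar{L}}{\delta \xi} - \tfrac{d}{dt}\tfrac{\delta \bar{L}}{\delta \xi},\eta\rangle_e$.

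The step I expect to be the crux is the trivialization bookkeeping that brings the $\mathfrak{g}^\ast$-valued fibre term (paired against $\eta$ at $e$) and the $T_g^\ast G$-valued base term (paired against $\delta g$ at $g$) onto a common footing. Because $\xi = T_g R_{g^{-1}}V_g$ arises from right trivialization, the corresponding variation obeys $\eta = T_g R_{g^{-1}}\delta g$; inserting this and transposing through the dual translation $T_g^\ast R_{g^{-1}}\colon \mathfrak{g}^\ast \to T_g^\ast G$ recasts the fibre term as a pairing with $\delta g$ over $T_g^\ast G$. Keeping the directions of these translation maps and their transposes straight is the delicate part, and in the infinite-dimensional example of interest (the diffeomorphism group) it is the reflexivity assumption on $\mathfrak{g}$ that makes the functional derivatives land in $\mathfrak{g}^\ast$ and $ad_\xi^\ast$ well defined on all of $\mathfrak{g}^\ast$. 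One must also check that the endpoint term vanishes, which it does once $\delta g$ is required to vanish at $a$ and $b$.

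With these substitutions the integrand becomes $\langle \frac{\delta \bar{L}}{\delta g} + T_g^\ast R_{g^{-1}}(ad_\xi^\ast \frac{\delta \bar{L}}{\delta \xi} - \frac{d}{dt}\frac{\delta \bar{L}}{\delta \xi}),\delta g\rangle_g$. Invoking the fundamental lemma of the calculus of variations, the arbitrariness of $\delta g$ forces the enclosed covector to vanish in $T_g^\ast G$. Finally, applying $T_e^\ast R_g$, which is inverse to $T_g^\ast R_{g^{-1}}$ on $\mathfrak{g}^\ast$, and rearranging terms yields the trivialized Euler--Lagrange equation (\ref{preeulerlagrange}) as an identity in $\mathfrak{g}^\ast$.
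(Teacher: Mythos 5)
Your proposal is correct and follows essentially the same route as the paper: the variation split into base and fibre contributions, substitution of the constrained variation $\delta \xi =\dot{\eta}+[\xi ,\eta ]$, integration by parts plus the duality $\langle \cdot ,[\xi ,\eta ]\rangle =\langle ad_{\xi }^{\ast }\cdot ,\eta \rangle$, the identification $\eta =T_{g}R_{g^{-1}}\delta g$ with transposition through $T_{g}^{\ast }R_{g^{-1}}$, and finally the fundamental lemma followed by application of $T_{e}^{\ast }R_{g}$. The only additions are your (correct) remarks on reflexivity and on the vanishing of the endpoint term, both of which are consistent with the paper's hypotheses.
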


The trivialized Euler-Lagrange equation (\ref{preeulerlagrange}) is
defined over the identity $e\in G$. Because $\delta \bar{L}/\delta
g\in T_{g}^{\ast
}G$ and the functor $T_{e}^{\ast }R_{g}$ takes this to the dual space $%
\mathfrak{g}^{\ast }=T_{e}^{\ast }G$. Eq.(\ref{preeulerlagrange})
appeared in \cite{En00} with the missing operator $T_{e}^{\ast
}R_{g}$. It also appeared in some recent works \cite{CoDi11, CoDi13}
on the higher order Lagrangian and Hamiltonian dynamics on
trivialized iterated bundles of Lie groups. See also \cite{BoMa07}.

When the Lagrangian $\bar{L}$ is independent of $g,$ that is,
$\bar{L}\left( g,\xi \right) =l\left( \xi \right) $\ and $L$ on $TG$
is right invariant,
then Eq.(\ref{preeulerlagrange}) reduces to the Euler-Poincar\'{e} equation%
\begin{equation}
ad_{\xi }^{\ast }\frac{\delta l}{\delta \xi }-\frac{d}{dt}\frac{\delta l}{%
\delta \xi }=0.  \label{EPEq}
\end{equation}

\subsection{Hamiltonian dynamics}

By pushing forward the canonical one-form $\theta _{T^{\ast }G}$ and
symplectic two-form $\Omega _{T^{\ast }G}$ on $T^{\ast }G$ with the
trivialization map $tr_{T^{\ast }G}^{R}$, $G\circledS
\mathfrak{g}^{\ast }$ can be endowed with an exact symplectic
two-form $\Omega _{G\circledS \mathfrak{g}^{\ast }}=d\theta
_{G\circledS \mathfrak{g}^{\ast }}$. If
\begin{equation*}
X_{\left( \xi ,\nu \right) }^{G\circledS \mathfrak{g}^{\ast }}\left(
g,\mu \right) =\left( T_{e}R_{g}\xi ,\nu +ad_{\xi }^{R\ast }\mu
\right) ,
\end{equation*}%
is a right invariant vector field at the point $\left( g,\mu \right)
\in
G\circledS \mathfrak{g}^{\ast }$ generated by the Lie algebra element $%
\left( \xi ,\nu \right) \in Lie\left( G\circledS \mathfrak{g}^{\ast
}\right) \simeq \mathfrak{g}\circledS \mathfrak{g}^{\ast }$ then,
the values of canonical one-forms $\theta _{G\circledS
\mathfrak{g}^{\ast }}$ and $\Omega _{G\circledS \mathfrak{g}^{\ast
}}$ on $X_{\left( \xi ,\nu \right) }^{G\circledS \mathfrak{g}^{\ast
}}\left( g,\mu \right) $ are \cite{AbMa78, EsGu14a}
\begin{eqnarray}
\left\langle \theta _{G\circledS \mathfrak{g}^{\ast }},X_{\left( \xi
,\nu \right) }^{G\circledS \mathfrak{g}^{\ast }}\right\rangle \left(
g,\mu
\right) &=&\left\langle \mu ,\xi \right\rangle  \label{OhmT*G} \\
\left\langle \Omega _{G\circledS \mathfrak{g}^{\ast }};\left(
X_{\left( \xi ,\nu \right) }^{G\circledS _{R}\mathfrak{g}^{\ast
}},X_{\left( \eta ,\lambda \right) }^{G\circledS \mathfrak{g}^{\ast
}}\right) \right\rangle \left( g,\mu \right) &=&\left\langle \nu
,\eta \right\rangle -\left\langle \lambda
,\xi \right\rangle +\left\langle \mu ,\left[ \xi ,\eta \right] _{\mathfrak{g}%
}\right\rangle  \label{Ohm2T*G}
\end{eqnarray}%
which are considered for linearizations of Hamiltonian systems in \cite%
{MaRaRa91} and, for higher order dynamics in \cite{CoDi11}. Let $H$
be a
function on $T^{\ast }G$ and define $\bar{H}:G\circledS \mathfrak{g}^{\ast }%
\mathfrak{\rightarrow
\mathbb{R}
}$ by $\bar{H}\circ tr_{T^{\ast }G}^{R}=H$, that is, for $\alpha
_{g}=T_{g}^{\ast }R_{g^{-1}}\mu $, we have $\bar{H}\left( g,\mu
\right) =H\left( \alpha _{g}\right) $ and Hamilton's equations on
$\left( G\circledS
\mathfrak{g}^{\ast },\Omega _{G\circledS \mathfrak{g}^{\ast }}\right) $ are%
\begin{equation}
i_{X_{\bar{H}}^{G\circledS \mathfrak{g}^{\ast }}}\Omega _{G\circledS
\mathfrak{g}^{\ast }}=-d\bar{H}.  \label{HamEq}
\end{equation}

\begin{proposition}
The Hamiltonian vector field $X_{\bar{H}}^{G\circledS
_{R}\mathfrak{g}^{\ast }}$, defined in Eq.(\ref{HamEq}), is
generated by the element
\begin{equation*}
(\frac{\delta \bar{H}}{\delta \mu },-T_{e}^{\ast }R_{g}\left(
\frac{\delta \bar{H}}{\delta g}\right) )
\end{equation*}%
of the Lie algebra $\mathfrak{g}\circledS \mathfrak{g}^{\ast }$ of $%
G\circledS \mathfrak{g}^{\ast }$ and components of
$X_{\bar{H}}^{G\circledS
_{R}\mathfrak{g}^{\ast }}$ are given by the trivialized Hamilton's equations%
\begin{equation}
\frac{dg}{dt}=T_{e}R_{g}\left( \frac{\delta \bar{H}}{\delta \mu
}\right) ,\ \text{\ }\ \frac{d\mu }{dt}=ad_{\frac{\delta
\bar{H}}{\delta \mu }}^{\ast }\mu -T_{e}^{\ast }R_{g}\frac{\delta
\bar{H}}{\delta g}.  \label{ULP}
\end{equation}
\end{proposition}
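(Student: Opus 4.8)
The plan is to exploit the fact that $G\circledS \mathfrak{g}^{\ast }$ is itself a (trivialized) Lie group, so that the value at a point $(g,\mu )$ of any vector field---in particular of the Hamiltonian field $X_{\bar{H}}^{G\circledS \mathfrak{g}^{\ast }}$ defined by (\ref{HamEq})---coincides with the value at $(g,\mu )$ of a right invariant field $X_{(\zeta ,\rho )}^{G\circledS \mathfrak{g}^{\ast }}$ generated by some (a priori $(g,\mu )$-dependent) element $(\zeta ,\rho )\in \mathfrak{g}\circledS \mathfrak{g}^{\ast }$. The whole problem then reduces to determining $(\zeta ,\rho )$ and reading off its two components from the explicit formula $X_{(\zeta ,\rho )}^{G\circledS \mathfrak{g}^{\ast }}(g,\mu )=(T_{e}R_{g}\zeta ,\rho +ad_{\zeta }^{\ast }\mu )$.

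First I would insert this ansatz into the defining relation $i_{X_{\bar{H}}}\Omega _{G\circledS \mathfrak{g}^{\ast }}=-d\bar{H}$ and pair both sides with an arbitrary right invariant test field $X_{(\eta ,\lambda )}^{G\circledS \mathfrak{g}^{\ast }}$. The left-hand side is read off directly from (\ref{Ohm2T*G}) with $(\xi ,\nu )=(\zeta ,\rho )$, giving $\langle \rho ,\eta \rangle -\langle \lambda ,\zeta \rangle +\langle \mu ,[\zeta ,\eta ]\rangle $. For the right-hand side I would split $d\bar{H}$ into its functional derivatives $\delta \bar{H}/\delta g\in T_{g}^{\ast }G$ and $\delta \bar{H}/\delta \mu \in \mathfrak{g}$ and evaluate them on the components $(T_{e}R_{g}\eta ,\lambda +ad_{\eta }^{\ast }\mu )$ of the test field, using the adjoint relation $\langle \delta \bar{H}/\delta g,T_{e}R_{g}\eta \rangle =\langle T_{e}^{\ast }R_{g}(\delta \bar{H}/\delta g),\eta \rangle $ and the coadjoint identity $\langle ad_{\eta }^{\ast }\mu ,\delta \bar{H}/\delta \mu \rangle =\langle \mu ,[\eta ,\delta \bar{H}/\delta \mu ]\rangle $, so that $-\langle d\bar{H},X_{(\eta ,\lambda )}\rangle =-\langle T_{e}^{\ast }R_{g}(\delta \bar{H}/\delta g),\eta \rangle -\langle \lambda ,\delta \bar{H}/\delta \mu \rangle -\langle \mu ,[\eta ,\delta \bar{H}/\delta \mu ]\rangle $.

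Comparing coefficients of the independent parameters $\lambda \in \mathfrak{g}^{\ast }$ and $\eta \in \mathfrak{g}$ then pins down $(\zeta ,\rho )$. The $\lambda $-terms immediately force $\zeta =\delta \bar{H}/\delta \mu $, the first claimed component. The step I expect to be the crux is the $\eta $-comparison: once $\zeta =\delta \bar{H}/\delta \mu $ is fixed, the bracket term $\langle \mu ,[\zeta ,\eta ]\rangle $ coming from the symplectic form must agree with the coadjoint term $\langle \mu ,[\eta ,\delta \bar{H}/\delta \mu ]\rangle $ coming from $d\bar{H}$, and indeed they do by antisymmetry of the Lie bracket. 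This matching is exactly what makes the system consistent rather than overdetermined (equivalently, it is where non-degeneracy of $\Omega _{G\circledS \mathfrak{g}^{\ast }}$, i.e. the existence and uniqueness of $X_{\bar{H}}$, is felt), and it is the one place requiring careful sign bookkeeping in the right-convention adjoint action where $ad^{R\ast }$ of the vector-field formula and $ad^{\ast }$ of (\ref{ULP}) are identified. What survives is $\langle \rho ,\eta \rangle =-\langle T_{e}^{\ast }R_{g}(\delta \bar{H}/\delta g),\eta \rangle $ for all $\eta $, hence $\rho =-T_{e}^{\ast }R_{g}(\delta \bar{H}/\delta g)$, the second claimed component.

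Finally I would substitute $(\zeta ,\rho )=(\delta \bar{H}/\delta \mu ,\,-T_{e}^{\ast }R_{g}(\delta \bar{H}/\delta g))$ into $X_{(\zeta ,\rho )}^{G\circledS \mathfrak{g}^{\ast }}(g,\mu )=(T_{e}R_{g}\zeta ,\rho +ad_{\zeta }^{\ast }\mu )$ and read off the two components $dg/dt=T_{e}R_{g}(\delta \bar{H}/\delta \mu )$ and $d\mu /dt=ad_{\delta \bar{H}/\delta \mu }^{\ast }\mu -T_{e}^{\ast }R_{g}(\delta \bar{H}/\delta g)$, which are precisely the trivialized Hamilton's equations (\ref{ULP}). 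Apart from the adjoint-sign bookkeeping flagged above, every remaining manipulation is routine linear pairing, so I anticipate no further obstacle.
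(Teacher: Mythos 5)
Your proof is correct. Note that the paper actually states this proposition \emph{without} any proof, so there is nothing to compare against; your computation --- writing $X_{\bar H}^{G\circledS\mathfrak{g}^{\ast}}$ pointwise as a right invariant field $X_{(\zeta,\rho)}^{G\circledS\mathfrak{g}^{\ast}}$, pairing the defining relation $i_{X_{\bar H}}\Omega_{G\circledS\mathfrak{g}^{\ast}}=-d\bar H$ against arbitrary right invariant test fields via Eq.~(\ref{Ohm2T*G}), and isolating the $\lambda$- and $\eta$-coefficients to get $\zeta=\delta\bar H/\delta\mu$ and $\rho=-T_{e}^{\ast}R_{g}(\delta\bar H/\delta g)$ --- is exactly the standard argument the authors implicitly rely on, and it is consistent with their subsequent use of the generator $\bigl(\delta\bar F/\delta\mu,\,-T^{\ast}R_{g}(\delta\bar F/\delta g)\bigr)$ in the canonical Poisson bracket computation. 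Your observation that the bracket terms cancel by antisymmetry, rather than overdetermining the system, is the right consistency check.
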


Note that the second term on the right hand side of the second
equation in
Eq.(\ref{ULP}) is a consequence of the semidirect product structure on $%
G\circledS \mathfrak{g}^{\ast }$. Accordingly, if we let $\bar{H}$
to be independent of the group element, that is, $\bar{H}\left(
g,\mu \right) =h\left( \mu \right) $ and $H$ on $T^{\ast }G$ is
right invariant, then the trivialized Hamilton's equations
(\ref{ULP}) reduce to
\begin{equation}
\frac{dg}{dt}=T_{e}R_{g}\left( \frac{\delta h}{\delta \mu }\right)
\text{, \ \ }\frac{d\mu }{dt}=ad_{\frac{\delta h}{\delta \mu
}}^{\ast }\mu . \label{LPA}
\end{equation}

The canonical Poisson bracket on $G\circledS \mathfrak{g}^{\ast }$
is
\begin{eqnarray*}
&&\left\{ \bar{F},\bar{K}\right\} _{G\circledS \mathfrak{g}^{\ast
}}\left(
g,\mu \right) =\Omega _{G\circledS \mathfrak{g}^{\ast }}\left( X_{\bar{F}%
}^{G\circledS \mathfrak{g}^{\ast }},X_{\bar{K}}^{G\circledS \mathfrak{g}%
^{\ast }}\right) \left( g,\mu \right) \\
&=&\Omega _{G\circledS \mathfrak{g}^{\ast }}\left( X_{\left(
\frac{\delta \bar{F}}{\delta \mu },-T^{\ast }R_{g}\frac{\delta
\bar{F}}{\delta g}\right) }^{G\circledS \mathfrak{g}^{\ast
}},X_{\left( \frac{\delta \bar{K}}{\delta \mu },-T^{\ast
}R_{g}\frac{\delta \bar{K}}{\delta g}\right) }^{G\circledS
\mathfrak{g}^{\ast }}\right) \left( g,\mu \right) \\
&=&\left\langle T_{e}^{\ast }R_{g}\frac{\delta \bar{K}}{\delta g},\frac{%
\delta \bar{F}}{\delta \mu }\right\rangle -\left\langle T_{e}^{\ast }R_{g}%
\frac{\delta \bar{F}}{\delta g},\frac{\delta \bar{K}}{\delta \mu }%
\right\rangle +\left\langle \mu ,\left[ \frac{\delta \bar{F}}{\delta \mu },%
\frac{\delta \bar{K}}{\delta \mu }\right]
_{\mathfrak{g}}\right\rangle ,
\end{eqnarray*}%
for two function(al)s $\bar{F}$ and $\bar{K}$ defined on $G\circledS
\mathfrak{g}^{\ast }$. The Poisson bracket $\left\{ \text{ },\text{ }%
\right\} _{G\circledS \mathfrak{g}^{\ast }}$ is non-degenerate. When
$\bar{F}
$ and $\bar{K}$ are independent of the group variable $g\in G$, that is, $%
\bar{F}=f\left( \mu \right) $ and $\bar{K}=k\left( \mu \right) $, we
have
the Lie-Poisson bracket%
\begin{equation}
\left\{ f,k\right\} _{\mathfrak{g}^{\ast }}\left( \mu \right)
=\left\langle \mu ,\left[ \frac{\delta f}{\delta \mu },\frac{\delta
k}{\delta \mu }\right] _{\mathfrak{g}}\right\rangle
\label{LPbracket}
\end{equation}%
on the dual space $\mathfrak{g}^{\ast }$ \cite{ArKh98, HoScSt09,
MaRa99, Kr04}. This is a manifestation of the fact that the
projection $G\circledS \mathfrak{g}^{\ast }\rightarrow
\mathfrak{g}^{\ast }$ is the momentum map
for the cotangent lifted left action of $G$ on $G\circledS \mathfrak{g}%
^{\ast }$. In this case, the dynamics is driven by the Hamiltonian
vector field $X_{h}^{\mathfrak{g}^{\ast }}$ satisfying
\begin{equation*}
\left\{ f,h\right\} _{\mathfrak{g}^{\ast }}=-\left\langle df,X_{h}^{%
\mathfrak{g}^{\ast }}\right\rangle
\end{equation*}%
for a given Hamiltonian function(al) $h$ on $\mathfrak{g}^{\ast }$.
More explicitly, the value of Hamiltonian vector field
$X_{h}^{\mathfrak{g}^{\ast }}$ at $\mu \in \mathfrak{g}^{\ast }$ is
defined by the Lie-Poisson equations
\begin{equation}
\frac{d\mu }{dt}=ad_{\frac{\delta h}{\delta \mu }}^{\ast }\mu
\text{.} \label{LP}
\end{equation}%
We shall refer to both of the equations in (\ref{LPA}) and
(\ref{LP}) as Lie-Poisson equations \cite{MaRaRa91}.
\newpage
\section{The Trivialized Dynamics}

\subsection{Trivialization of the Tulczyjew's Triplet}

The tangent and cotangent lifts of the group structure on $G$ define
group
structures on $TG$ and $T^{\ast }G$, respectively. The\ trivialization maps $%
tr_{TG}^{R}$ and $tr_{T^{\ast }G}^{R}$ on $TG$ and $T^{\ast }G$ are
defined in such a way that they are not only diffeomorphisms but
also group
isomorphisms \cite{Hi06, KoMiSl93, MaRaWe84, MaRaWe84b, MaRaRa91, Mi08, Ra80}%
. For iterated bundles, with the same understanding of
trivializations we obtained \cite{EsGu14a}
\begin{eqnarray}
tr_{T^{\ast }\left( G\circledS \mathfrak{g}\right) }^{1} &:&T^{\ast
}\left(
G\circledS \mathfrak{g}\right) \rightarrow \left( G\circledS \mathfrak{g}%
\right) \circledS \left( \mathfrak{g}^{\ast }\times
\mathfrak{g}^{\ast
}\right) =\text{ }^{1}T^{\ast }TG  \label{trT*TG} \\
&:&\left( \alpha _{g},\alpha _{\xi }\right) \rightarrow \left( g,\xi
,T_{e}^{\ast }R_{g}\left( \alpha _{g}\right) +ad_{\xi }^{\ast
}\alpha _{\xi
},\alpha _{\xi }\right) ,  \notag \\
tr_{T^{\ast }\left( G\circledS \mathfrak{g}^{\ast }\right) }^{1}
&:&T^{\ast }T^{\ast }G\rightarrow \left( G\circledS
\mathfrak{g}^{\ast }\right) \circledS \left( \mathfrak{g}^{\ast
}\times \mathfrak{g}\right) =\
^{1}T^{\ast }T^{\ast }G  \label{trT*T*G} \\
&:&\left( \alpha _{g},\alpha _{\mu }\right) \rightarrow \left( g,\mu
,T_{e}^{\ast }R_{g}\left( \alpha _{g}\right) -ad_{\alpha _{\mu
}}^{\ast }\mu
,\alpha _{\mu }\right) ,  \notag \\
tr_{T\left( G\circledS \mathfrak{g}^{\ast }\right) }^{1} &:&TT^{\ast
}G\rightarrow \left( G\circledS \mathfrak{g}^{\ast }\right)
\circledS \left( \mathfrak{g}\circledS \mathfrak{g}^{\ast }\right)
=\ ^{1}TT^{\ast }G
\label{trTT*G} \\
&:&\left( V_{g},V_{\mu }\right) \rightarrow \left( g,\mu
,TR_{g^{-1}}V_{g},V_{\mu }-ad_{TR_{g^{-1}}V_{g}}^{\ast }\mu \right)
.  \notag
\end{eqnarray}%
Although, not unique, this way of trivializing iterated bundles
enables us to perform reductions of Tulczyjew's triplet.

The symplectic two-forms on the trivialized bundles $^{1}T^{\ast }TG$, $%
^{1}T^{\ast }T^{\ast }G$ and $^{1}TT^{\ast }G$ have been constructed
based on the fact that the trivialization maps $tr_{T^{\ast }\left(
G\circledS
\mathfrak{g}\right) }^{1}$, $tr_{T^{\ast }\left( G\circledS \mathfrak{g}%
^{\ast }\right) }^{1}$ and $tr_{T\left( G\circledS
\mathfrak{g}^{\ast }\right) }^{1}$ are symplectic diffeomorphisms.
The trivialized Tulczyjew's triplet
\begin{equation}
\xymatrix{^{1}T^{\ast }TG\ar[dr]_{^{1}\pi _{G\circledS
\mathfrak{g}}}&&^{1}TT^{\ast
}G\ar[ll]_{^{1}\bar{\sigma}_{G}}\ar[rr]^{^{1}\Omega _{G\circledS
\mathfrak{g}^{\ast }}^{\flat }}\ar[dl]^{^{1}T\pi
_{G}}\ar[dr]_{^{1}\tau _{G\circledS \mathfrak{g}^{\ast
}}}&&^{1}T^{\ast }T^{\ast }G\ar[dl]^{^{1}\pi _{G\circledS
\mathfrak{g}}} \\ &G\circledS \mathfrak{g}&&G\circledS
\mathfrak{g}^{\ast }}  \label{TrTT}
\end{equation}
consists of trivialized symplectic diffeomorphisms
$^{1}\bar{\sigma}_{G}$ and $^{1}\Omega _{G\circledS
\mathfrak{g}^{\ast }}^{\flat }$, and projections whose local
expressions are
\begin{eqnarray}
^{1}\bar{\sigma}_{G} &:&\text{ }^{1}TT^{\ast }G\rightarrow \text{ }%
^{1}T^{\ast }TG:\left( g,\mu ,\xi ,\nu \right) \rightarrow \left(
g,\xi ,\nu
+ad_{\xi }^{\ast }\mu ,\mu \right) ,  \label{sig1} \\
\text{ }\ ^{1}\Omega _{G\circledS \mathfrak{g}^{\ast }}^{\flat } &:&\text{ }%
^{1}TT^{\ast }G\rightarrow \text{ }^{1}T^{\ast }T^{\ast }G:\left(
g,\mu ,\xi ,\nu \right) \rightarrow \left( g,\mu ,\nu +ad_{\xi
}^{\ast }\mu ,-\xi
\right) ,  \label{ohm1} \\
\ ^{1}T\pi _{G} &:&\ ^{1}TT^{\ast }G\rightarrow G\circledS \mathfrak{g}%
:\left( g,\mu ,\xi ,\nu \right) \rightarrow \left( g,\xi \right) ,
\label{1T} \\
\text{ }^{1}\pi _{G\circledS \mathfrak{g}^{\ast }} &:&\ ^{1}T^{\ast
}T^{\ast }G\rightarrow G\circledS \mathfrak{g}^{\ast }:\left( g,\mu
,\nu ,\xi \right)
\rightarrow \left( g,\mu \right) , \\
\text{ }^{1}\pi _{G\circledS \mathfrak{g}} &:&\ ^{1}T^{\ast
}TG\rightarrow G\circledS \mathfrak{g}:\left( g,\xi ,\mu ,\nu
\right) \rightarrow \left( g,\xi \right) .
\end{eqnarray}

\subsection{Trivialized Tulczyjew's Symplectic Space}

Lie algebra of the group$\ ^{1}TT^{\ast }G$ is the semi-direct product $%
\left( \mathfrak{g}\circledS \mathfrak{g}^{\ast }\right) \circledS
\left(
\mathfrak{g}\circledS \mathfrak{g}^{\ast }\right) $. A Lie algebra element%
\begin{equation*}
\left( \xi _{2},\nu _{2},\xi _{3},\nu _{3}\right) \in \left( \mathfrak{g}%
\circledS \mathfrak{g}^{\ast }\right) \circledS \left(
\mathfrak{g}\circledS \mathfrak{g}^{\ast }\right)
\end{equation*}%
defines a right invariant vector field on $\ ^{1}TT^{\ast }G$ by the
tangent lift of right translation in $\ ^{1}TT^{\ast }G$. At a point
$\left( g,\mu
,\xi ,\nu \right) $, a right invariant vector field takes the value%
\begin{equation}
X_{\left( \xi _{2},\nu _{2},\xi _{3},\nu _{3}\right) }^{\
^{1}TT^{\ast }G}=\left( TR_{g}\xi _{2},\nu _{2}+ad_{\xi _{2}}^{\ast
}\mu ,\xi _{3}+\left[ \xi ,\xi _{2}\right] _{\mathfrak{g}},\nu
_{3}+ad_{\xi _{2}}^{\ast }\nu -ad_{\xi }^{\ast }\nu _{2}\right) .
\label{RITT*G}
\end{equation}%
By requiring the trivialization $tr_{TT^{\ast }G}^{1}$ be a
symplectic mapping, we obtain an exact symplectic structure $\Omega
_{^{1}TT^{\ast }G}$ with two potential one-forms $\theta _{1}$ and
$\theta _{2}$ on $\ ^{1}TT^{\ast }G$. At a point $\left( g,\mu ,\xi
,\nu \right) \in \
^{1}TT^{\ast }G$, the values of the potential one-forms $\theta _{1}$ and $%
\theta _{2}$ on the right invariant vector field of the form of Eq.(\ref%
{RITT*G}) are%
\begin{eqnarray}
\left\langle \theta _{1},X_{\left( \xi _{2},\nu _{2},\xi _{3},\nu
_{3}\right) }^{\ ^{1}TT^{\ast }G}\right\rangle &=&\left\langle \nu
,\xi _{2}\right\rangle -\left\langle \nu _{2},\xi \right\rangle
+\left\langle \mu
,\left[ \xi ,\eta \right] _{\mathfrak{g}}\right\rangle ,  \label{1} \\
\left\langle \theta _{2},X_{\left( \xi _{2},\nu _{2},\xi _{3},\nu
_{3}\right) }^{\ ^{1}TT^{\ast }G}\right\rangle &=&\left\langle \mu
,\xi _{3}\right\rangle +\left\langle \nu ,\xi _{2}\right\rangle
+\left\langle \mu ,\left[ \xi ,\xi _{2}\right]
_{\mathfrak{g}}\right\rangle ,  \label{2}
\end{eqnarray}%
respectively. At the same point, the value of symplectic two-form
$\Omega _{\ ^{1}TT^{\ast }G}$ on two right invariant vector fields
is
\begin{eqnarray*}
&&\left\langle \Omega _{\ ^{1}TT^{\ast }G};\left( X_{\left( \xi
_{2},\nu
_{2},\xi _{3},\nu _{3}\right) }^{\ ^{1}TT^{\ast }G},X_{\left( \bar{\xi}_{2},%
\bar{\nu}_{2},\bar{\xi}_{3},\bar{\nu}_{3}\right) }^{\ ^{1}TT^{\ast
}G}\right) \right\rangle =\left\langle \nu
_{3},\bar{\xi}_{2}\right\rangle
+\left\langle \nu _{2},\bar{\xi}_{3}\right\rangle -\left\langle \bar{\nu}%
_{2},\xi _{3}\right\rangle -\left\langle \bar{\nu}_{3},\xi
_{2}\right\rangle
\\
&&+\left\langle \nu ,\left[ \xi _{2},\bar{\xi}_{2}\right] _{\mathfrak{g}%
}\right\rangle +\left\langle \mu ,\left[ \xi _{3},\bar{\xi}_{2}\right] _{%
\mathfrak{g}}+\left[ \xi _{2},\bar{\xi}_{3}\right]
_{\mathfrak{g}}+\left[ \xi ,\left[ \xi _{2},\bar{\xi}_{2}\right]
_{\mathfrak{g}}\right] \right\rangle .
\end{eqnarray*}%
Existence of potential one-forms in Eqs.(\ref{1}) and (\ref{2})
leads us to define two special symplectic structures
\begin{eqnarray}
&&\left( \ ^{1}TT^{\ast }G,\ ^{1}\tau _{G\circledS \mathfrak{g}^{\ast }}%
\text{,}\ ^{1}T^{\ast }T^{\ast }G,\ \theta _{1},\ ^{1}\Omega
_{G\circledS
\mathfrak{g}^{\ast }}^{\flat }\right)  \label{SS1} \\
&&\left( \ ^{1}TT^{\ast }G,\ ^{1}T\pi _{G}\text{,}\ ^{1}T^{\ast
}TG,\ \theta _{2},\ ^{1}\bar{\sigma}_{G}\right) ,  \label{SS2}
\end{eqnarray}%
on the trivialized Tulczyjew's symplectic manifold $\left(
^{1}TT^{\ast }G,\Omega _{\ ^{1}TT^{\ast }G}\right) $. The structures
in Eqs.(\ref{SS1}) and (\ref{SS2}) are the right and left wings of
the trivialized Tulczyjew's triplet (\ref{TrTT}), respectively. We
refer to \cite{EsGu14a} for details.

\subsection{Trivialized Lagrangian Dynamics as a Lagrangian Submanifold}

\begin{proposition}
Let $\bar{L}$ be a Lagrangian on $G\circledS \mathfrak{g}$, then the
Lagrangian submanifold $S_{\ ^{1}TT^{\ast }G}$ defined by the
equation
\begin{equation}
\left( \ ^{1}T\pi _{G}\right) ^{\ast }d\bar{L}=\theta _{2},
\label{UnEPLag}
\end{equation}%
gives the trivialized Euler-Lagrange equations
(\ref{preeulerlagrange}). Here, the projection$\ ^{1}T\pi _{G}$ is
given by Eq.(\ref{1T}) and $\theta _{2}$ is the one-form in
Eq.(\ref{2}).
\end{proposition}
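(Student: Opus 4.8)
The plan is to evaluate the defining identity (\ref{UnEPLag}) on an arbitrary right invariant vector field $X_{(\xi_2,\nu_2,\xi_3,\nu_3)}^{\ ^1TT^*G}$ of the form (\ref{RITT*G}) based at $(g,\mu,\xi,\nu)$, and then to read off the pointwise conditions that cut out $S_{\ ^1TT^*G}$. First I would treat the left-hand side. Since $^1T\pi_G$ sends $(g,\mu,\xi,\nu)$ to $(g,\xi)$ by (\ref{1T}), its tangent map kills the $\mu$- and $\nu$-slots and returns the tangent vector to $G\circledS\mathfrak g$ whose $g$-component is $T_eR_g\xi_2$ and whose $\xi$-component is $\xi_3+[\xi,\xi_2]_{\mathfrak g}$. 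Pairing this with $d\bar L=(\delta\bar L/\delta g)\,dg+(\delta\bar L/\delta\xi)\,d\xi$ and transferring the right translation and the bracket onto the covector slot through $\langle \delta\bar L/\delta g,T_eR_g\xi_2\rangle=\langle T_e^*R_g(\delta\bar L/\delta g),\xi_2\rangle$ and $\langle \delta\bar L/\delta\xi,[\xi,\xi_2]_{\mathfrak g}\rangle=\langle ad_\xi^*(\delta\bar L/\delta\xi),\xi_2\rangle$, I obtain
\[
\langle (\ ^1T\pi_G)^*d\bar L,X\rangle=\Big\langle \tfrac{\delta\bar L}{\delta\xi},\xi_3\Big\rangle+\Big\langle T_e^*R_g\tfrac{\delta\bar L}{\delta g}+ad_\xi^*\tfrac{\delta\bar L}{\delta\xi},\xi_2\Big\rangle.
\]

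Next I would rewrite the right-hand side. Using (\ref{2}) together with $\langle\mu,[\xi,\xi_2]_{\mathfrak g}\rangle=\langle ad_\xi^*\mu,\xi_2\rangle$, the value of $\theta_2$ on the same vector is $\langle\mu,\xi_3\rangle+\langle \nu+ad_\xi^*\mu,\xi_2\rangle$. Because $\xi_2,\xi_3\in\mathfrak g$ range freely while the slots $\nu_2,\nu_3$ do not appear on either side, equating (\ref{UnEPLag}) coefficient-by-coefficient produces the two conditions describing $S_{\ ^1TT^*G}$: from the $\xi_3$-slot, $\mu=\delta\bar L/\delta\xi$, and from the $\xi_2$-slot, after the $ad_\xi^*$ terms cancel by virtue of the first condition, $\nu=T_e^*R_g(\delta\bar L/\delta g)$.

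Finally I would supply the dynamical reading. A point $(g,\mu,\xi,\nu)\in\ ^1TT^*G$ is, by the trivialization (\ref{trTT*G}), the image of a tangent vector $(V_g,V_\mu)$ to a curve $(g(t),\mu(t))$ in $G\circledS\mathfrak g^*$, so that $\xi=TR_{g^{-1}}\dot g$ and $\nu=\dot\mu-ad_\xi^*\mu$, equivalently $\dot g=T_eR_g\xi$ and $\dot\mu=\nu+ad_\xi^*\mu$. Substituting the submanifold conditions $\mu=\delta\bar L/\delta\xi$ and $\nu=T_e^*R_g(\delta\bar L/\delta g)$ into $\dot\mu=\nu+ad_\xi^*\mu$ then gives
\[
\frac{d}{dt}\frac{\delta\bar L}{\delta\xi}=T_e^*R_g\frac{\delta\bar L}{\delta g}+ad_\xi^*\frac{\delta\bar L}{\delta\xi},
\]
which is precisely (\ref{preeulerlagrange}). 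I expect the one-form matching to be entirely routine; the genuine step is this last dynamical interpretation, namely recognizing through (\ref{trTT*G}) that the coordinate $\nu$ encodes the trivialized velocity $\dot\mu-ad_\xi^*\mu$, and that the algebraic constraint $\mu=\delta\bar L/\delta\xi$ promotes $\dot\mu$ to $\tfrac{d}{dt}(\delta\bar L/\delta\xi)$, thereby converting the static description of $S_{\ ^1TT^*G}$ into the evolution equation.
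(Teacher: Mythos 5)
Your proposal is correct and follows essentially the same route as the paper: derive the coordinate description $\mu=\delta\bar L/\delta\xi$, $\nu=T_e^{\ast}R_g(\delta\bar L/\delta g)$ of $S_{\ ^{1}TT^{\ast}G}$ from (\ref{UnEPLag}), then use the inverse trivialization (the paper's reconstruction map (\ref{recons})) to read $\nu=\dot\mu-ad_{\xi}^{\ast}\mu$ and obtain (\ref{preeulerlagrange}). The only difference is that you make explicit the pairing computation against right invariant vector fields that the paper leaves implicit when asserting Eq.~(\ref{Lag1}).
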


\begin{proof}
Under the global trivialization $\ ^{1}TT^{\ast }G$ of $T\left(
G\circledS \mathfrak{g}^{\ast }\right) $, given in
Eq.(\ref{trTT*G}), the Lagrangian submanifold described by
Eq.(\ref{UnEPLag}) becomes
\begin{equation}
S_{\ ^{1}TT^{\ast }G}=\left\{ \left( g,\frac{\delta \bar{L}}{\delta \xi }%
,\xi ,T^{\ast }R_{g}\frac{\delta \bar{L}}{\delta g}\right) \in \
^{1}TT^{\ast }G:\left( g,\xi \right) \in G\circledS
\mathfrak{g}\right\} . \label{Lag1}
\end{equation}%
To relate this to the trivialized Euler-Lagrange equations (\ref%
{preeulerlagrange}), we recall the reconstruction mapping%
\begin{equation}
\left( tr_{T\left( G\circledS \mathfrak{g}^{\ast }\right)
}^{1}\right) ^{-1}:\ ^{1}TT^{\ast }G\rightarrow T\left( G\circledS
\mathfrak{g}^{\ast }\right) :\left( g,\mu ,\xi ,\nu \right)
\rightarrow \left( g,\mu ,TR_{g}\xi ,\nu +ad_{\xi }^{\ast }\mu
\right) ,  \label{recons}
\end{equation}%
computed from Eq.(\ref{trTT*G}). $\left( tr_{T\left( G\circledS \mathfrak{g}%
^{\ast }\right) }^{1}\right) ^{-1}$ maps $S_{\ ^{1}TT^{\ast }G}$ to
the Lagrangian submanifold
\begin{equation}
S_{T\left( G\circledS \mathfrak{g}^{\ast }\right) }=\left\{ \left( g,\frac{%
\delta \bar{L}}{\delta \xi };TR_{g}\xi ,T^{\ast }R_{g}\frac{\delta \bar{L}}{%
\delta g}+ad_{\xi }^{\ast }\frac{\delta \bar{L}}{\delta \xi }\right)
\in
S_{\ ^{1}TT^{\ast }G}:\left( g,\xi \right) \in G\circledS \mathfrak{g}%
\right\}  \label{STT*G}
\end{equation}%
of $T\left( G\circledS \mathfrak{g}^{\ast }\right) $ and this
determines the trivialized Euler-Lagrange equations
(\ref{preeulerlagrange}).
\end{proof}

As mentioned in the context of general theory, an alternative way to obtain $%
S_{\ ^{1}TT^{\ast }G}$ is to consider a function $\bar{L}$ on
$G\circledS \mathfrak{g}$ together with the special symplectic
structure (\ref{SS2}).
This time using the trivialization $tr_{T^{\ast }\left( G\circledS \mathfrak{%
g}\right) }^{1}$ in Eq.(\ref{trT*TG}), we obtain the trivialization
of
exterior derivative $^{1}d\bar{L}:=tr_{T^{\ast }\left( G\circledS \mathfrak{g%
}\right) }^{R}\circ d\bar{L}$ which defines, through $im\left( ^{1}d\bar{L}%
\right) $, the Lagrangian submanifold
\begin{equation}
S_{\ ^{1}T^{\ast }TG}=\left\{ \left( g,\xi ,T_{e}^{\ast
}R_{g}\frac{\delta
\bar{L}}{\delta g}+ad_{\xi }^{\ast }\frac{\delta \bar{L}}{\delta \xi },\frac{%
\delta \bar{L}}{\delta \xi }\right) \in \ ^{1}T^{\ast }TG:\left(
g,\xi \right) \in G\circledS \mathfrak{g}\right\}  \label{S1T*TG}
\end{equation}%
of $\left( \ ^{1}T^{\ast }TG,\ ^{1}\Omega _{T^{\ast }\left(
G\circledS \mathfrak{g}\right) }\right) $. The inverse
$^{1}\bar{\sigma}_{G}^{-1}$ of the diffeomorphism
$^{1}\bar{\sigma}_{G}$ in Eq.(\ref{sig1}) takes the
Lagrangian submanifold $im\left( \ ^{1}d\bar{L}\right) $ in Eq.(\ref{S1T*TG}%
) to the Lagrangian submanifold $S_{\ ^{1}TT^{\ast }G}$ in
Eq.(\ref{Lag1}).

\subsection{Trivialized Hamiltonian Dynamics as a Lagrangian Submanifold}

\begin{proposition}
The Lagrangian submanifold defined by the equation
\begin{equation}
-\left( \ ^{1}\tau _{G\circledS \mathfrak{g}^{\ast }}\right) ^{\ast }d\bar{H}%
=\theta _{1}  \label{TrHamLag}
\end{equation}%
determines the trivialized Hamilton's equations (\ref{ULP}). Here,
$^{1}\tau
_{G\circledS \mathfrak{g}^{\ast }}$ is the tangent bundle projection and $%
\theta _{1}$ is the one-form in Eq.(\ref{1}).
\end{proposition}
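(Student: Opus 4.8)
The plan is to mirror exactly the structure of the Lagrangian-side Proposition, but now working from the right wing (the Hamiltonian side) of the trivialized triplet. First I would compute, in the trivialized coordinates $\left(g,\mu,\xi,\nu\right)$ on $\ ^{1}TT^{\ast}G$, what the defining equation $-\left(\ ^{1}\tau_{G\circledS\mathfrak{g}^{\ast}}\right)^{\ast}d\bar{H}=\theta_{1}$ actually says. Since $^{1}\tau_{G\circledS\mathfrak{g}^{\ast}}$ is the tangent bundle projection sending $\left(g,\mu,\xi,\nu\right)$ to $\left(g,\mu\right)$, the pullback $-\left(\ ^{1}\tau_{G\circledS\mathfrak{g}^{\ast}}\right)^{\ast}d\bar{H}$ is built purely from the derivatives $\delta\bar{H}/\delta g$ and $\delta\bar{H}/\delta\mu$. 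I would pair both sides against the generic right-invariant vector field $X_{\left(\xi_{2},\nu_{2},\xi_{3},\nu_{3}\right)}^{\ ^{1}TT^{\ast}G}$ of Eq.(\ref{RITT*G}), using the explicit formula (\ref{1}) for $\left\langle\theta_{1},X\right\rangle$ on the right and the Lie-algebra expression for $\left\langle d\bar{H},T\left(\ ^{1}\tau_{G\circledS\mathfrak{g}^{\ast}}\right)X\right\rangle$ on the left.

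Next, matching the coefficients of the independent Lie-algebra components $\xi_{2}$ and $\nu_{2}$ in that pairing should force the fiber coordinates $\xi$ and $\nu$ of a point of the submanifold to equal prescribed expressions in the Hamiltonian derivatives. By analogy with the Lagrangian case (\ref{Lag1}), I expect this to pin down $\xi=\delta\bar{H}/\delta\mu$ and $\nu$ to be the combination of $-T_{e}^{\ast}R_{g}\left(\delta\bar{H}/\delta g\right)$ with an $ad^{\ast}$-correction, so that the Lagrangian submanifold takes the explicit form
\begin{equation*}
S_{\ ^{1}TT^{\ast}G}^{\prime}=\left\{\left(g,\mu,\frac{\delta\bar{H}}{\delta\mu},-T_{e}^{\ast}R_{g}\frac{\delta\bar{H}}{\delta g}-ad_{\frac{\delta\bar{H}}{\delta\mu}}^{\ast}\mu\right):\left(g,\mu\right)\in G\circledS\mathfrak{g}^{\ast}\right\}.
\end{equation*}

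Then I would apply the reconstruction map $\left(tr_{T\left(G\circledS\mathfrak{g}^{\ast}\right)}^{1}\right)^{-1}$ of Eq.(\ref{recons}), exactly as in the proof of the previous Proposition, to push $S_{\ ^{1}TT^{\ast}G}^{\prime}$ into $T\left(G\circledS\mathfrak{g}^{\ast}\right)$. The last slot $\nu+ad_{\xi}^{\ast}\mu$ of that map should absorb the $ad^{\ast}$-correction, leaving precisely $\dot{\mu}=ad_{\delta\bar{H}/\delta\mu}^{\ast}\mu-T_{e}^{\ast}R_{g}\,\delta\bar{H}/\delta g$, while the third slot $TR_{g}\xi$ yields $\dot{g}=T_{e}R_{g}\left(\delta\bar{H}/\delta\mu\right)$; together these are the trivialized Hamilton's equations (\ref{ULP}). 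This step reproduces the content of the earlier Proposition on $X_{\bar{H}}^{G\circledS\mathfrak{g}^{\ast}}$, so I would simply identify the image with the graph of that Hamiltonian vector field rather than rederive it.

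The main obstacle I anticipate is bookkeeping the semidirect-product correction term correctly: the one-form $\theta_{1}$ carries the bracket term $\left\langle\mu,[\xi,\eta]\right\rangle$ (note the appearance of $\eta$ in Eq.(\ref{1}), which must be read consistently against $\xi_{2}$), and the trivialization (\ref{trTT*G}) subtracts an $ad_{TR_{g^{-1}}V_{g}}^{\ast}\mu$ term in the last fiber coordinate. Getting the signs and the placement of this $ad^{\ast}\mu$ term to align so that it is exactly cancelled by the reconstruction map, and so that the residual $ad_{\delta\bar{H}/\delta\mu}^{\ast}\mu$ survives with the correct sign in (\ref{ULP}), is the delicate point; everything else is a routine pairing computation parallel to the Lagrangian-side proof.
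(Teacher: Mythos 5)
Your overall strategy --- describe the submanifold in the trivialized coordinates by pairing both sides of Eq.(\ref{TrHamLag}) against the right invariant vector fields (\ref{RITT*G}) and then push forward with the reconstruction map (\ref{recons}) --- is exactly the paper's route, but the explicit submanifold you wrote down is wrong. The paper's Eq.(\ref{Lag2}) is
\begin{equation*}
S_{\ ^{1}TT^{\ast }G}^{\prime }=\left\{ \left( g,\mu ,\frac{\delta \bar{H}}{\delta \mu },-T_{e}^{\ast }R_{g}\frac{\delta \bar{H}}{\delta g}\right) :\left( g,\mu \right) \in G\circledS \mathfrak{g}^{\ast }\right\} ,
\end{equation*}
with \emph{no} $ad^{\ast }$-correction in the last slot. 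If you carry out the pairing you proposed, the tangent map of $^{1}\tau _{G\circledS \mathfrak{g}^{\ast }}$ sends $X_{\left( \xi _{2},\nu _{2},\xi _{3},\nu _{3}\right) }^{\ ^{1}TT^{\ast }G}$ to the right invariant field generated by $\left( \xi _{2},\nu _{2}\right) $, whose $\mathfrak{g}^{\ast }$-component is $\nu _{2}+ad_{\xi _{2}}^{\ast }\mu $; the resulting bracket term $-\left\langle \mu ,\left[ \xi _{2},\delta \bar{H}/\delta \mu \right] \right\rangle $ on the left-hand side cancels exactly against the term $\left\langle \mu ,\left[ \xi ,\xi _{2}\right] \right\rangle $ carried by $\theta _{1}$ once the $\nu _{2}$-coefficients have forced $\xi =\delta \bar{H}/\delta \mu $, leaving cleanly $\nu =-T_{e}^{\ast }R_{g}\,\delta \bar{H}/\delta g$.

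Moreover, your final step is internally inconsistent with your own intermediate formula: with $\nu =-T_{e}^{\ast }R_{g}\,\delta \bar{H}/\delta g-ad_{\delta \bar{H}/\delta \mu }^{\ast }\mu $, the reconstruction map returns $\nu +ad_{\xi }^{\ast }\mu =-T_{e}^{\ast }R_{g}\,\delta \bar{H}/\delta g$, i.e.\ the coadjoint term is destroyed rather than produced, and you do not recover Eq.(\ref{ULP}). The point of the reconstruction step is the opposite of what you describe: it must \emph{add} the term $ad_{\delta \bar{H}/\delta \mu }^{\ast }\mu $ to a $\nu $-slot that does not already contain it. The fix is precisely the computation you deferred: match coefficients of $\xi _{2}$ and $\nu _{2}$ instead of guessing the $ad^{\ast }$-term by analogy with the Lagrangian side. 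Once $S_{\ ^{1}TT^{\ast }G}^{\prime }$ is corrected as above, the rest of your argument goes through exactly as in the paper.
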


\begin{proof}
Under the global trivialization $^{1}TT^{\ast }G$ of $TT^{\ast }G$
given in Eq.(\ref{trTT*G}), the Lagrangian submanifold
(\ref{TrHamLag}) can be described as
\begin{equation}
S_{\ ^{1}TT^{\ast }G}^{\prime }=\left\{ \left( g,\mu ,\frac{\delta \bar{H}}{%
\delta \mu },-T^{\ast }R_{g}\frac{\delta \bar{H}}{\delta g}\right)
\in \ ^{1}TT^{\ast }G:\left( g,\mu \right) \in G\circledS
\mathfrak{g}^{\ast }\right\} .  \label{Lag2}
\end{equation}%
The reconstruction mapping $\left( tr_{T\left( G\circledS
\mathfrak{g}^{\ast }\right) }^{1}\right) ^{-1}$ in Eq.(\ref{recons})
maps $S_{\ ^{1}TT^{\ast }G}^{\prime }$ to the Lagrangian submanifold
\begin{equation}
S_{T\left( G\circledS \mathfrak{g}^{\ast }\right) }^{\prime
}=\left\{ \left( TR_{g}\left( \frac{\delta \bar{H}}{\delta \mu
}\right) ,ad_{\frac{\delta
\bar{H}}{\delta \mu }}^{\ast }\mu -TR_{g}^{\ast }\frac{\delta \bar{H}}{%
\delta g}\right) \in T\left( G\circledS \mathfrak{g}^{\ast }\right)
:\left( g,\mu \right) \in G\circledS \mathfrak{g}^{\ast }\right\}
\end{equation}%
which is the image of Hamiltonian vector field
$X_{\bar{H}}^{G\circledS \mathfrak{g}^{\ast }}$ defined in
Eq.(\ref{ULP}).
\end{proof}

Alternatively, using the trivialization of the exterior derivative
\begin{equation*}
-\ ^{1}d\bar{H}=-tr_{T^{\ast }\left( G\circledS \mathfrak{g}^{\ast
}\right) }^{R}\circ d\left( \bar{H}\right)
\end{equation*}%
we obtain the Lagrangian submanifold
\begin{equation}
S_{^{1}T^{\ast }T^{\ast }G}^{\prime }=\left\{ \left( g,\mu
,ad_{\frac{\delta
\bar{H}}{\delta \mu }}^{\ast }\mu -T_{e}^{\ast }R_{g}\frac{\delta \bar{H}}{%
\delta g},-\frac{\delta \bar{H}}{\delta \mu }\right) \in \text{
}^{1}T^{\ast }T^{\ast }G:\left( g,\mu \right) \in G\circledS
\mathfrak{g}^{\ast }\right\} \label{DH}
\end{equation}%
of $^{1}T^{\ast }T^{\ast }G$. The inverse $^{1}\Omega _{G\circledS \mathfrak{%
g}^{\ast }}^{\sharp }$ of the isomorphism $^{1}\Omega _{G\circledS \mathfrak{%
g}^{\ast }}^{\flat }$ maps $S_{^{1}T^{\ast }T^{\ast }G}^{\prime }$
to the Lagrangian submanifold $S_{\ ^{1}TT^{\ast }G}^{\prime }$.
This description of $S_{^{1}TT^{\ast }G}^{\prime }$ is the usual
form of Hamilton's equation
with respect to the symplectic two-form $^{1}\Omega _{G\circledS \mathfrak{g}%
^{\ast }}$.

\subsection{Legendre Transformation for Trivialized Dynamics}

In the previous section, the trivialized Euler-Lagrange equations (\ref%
{preeulerlagrange}) have been reformulated as the Lagrangian submanifold $S_{%
\text{ }^{1}TT^{\ast }G}$ described in Eq.(\ref{Lag1}). We are now
ready to
perform the Legendre transformation, that is to describe $S_{\text{ }%
^{1}TT^{\ast }G}$ from Hamiltonian side (bundles over $G\circledS \mathfrak{g%
}^{\ast }$) of the trivialized Tulczyjew's triplet (\ref{TrTT}).

\begin{proposition}
The Lagrangian dynamics determined by the Lagrangian submanifold $%
S_{^{1}TT^{\ast }G}$ in Eq.(\ref{Lag1}) is generated by the Morse
family
\begin{equation}
E^{\bar{L}\rightarrow \bar{H}}=\left( \bar{L}\circ \text{ }^{1}T\pi
_{G}\right) +\Delta =\bar{L}\left( g,\xi \right) -\left\langle \mu
,\xi \right\rangle  \label{ener}
\end{equation}%
defined on the (right) trivialized Pontryagin bundle
$^{1}PG=G\circledS
\left( \mathfrak{g\times g}^{\ast }\right) $ over $G\circledS \mathfrak{g}%
^{\ast }$. Here, the function $\Delta =\left\langle \mu ,\xi
\right\rangle $ is defined as to satisfy
\begin{equation*}
d\Delta =\theta _{1}-\theta _{2}=-\left\langle \mu ,\xi
_{3}\right\rangle -\left\langle \nu _{2},\xi \right\rangle .
\end{equation*}
\end{proposition}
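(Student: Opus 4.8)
The plan is to verify the claim through the general Legendre-transformation machinery set up in Section 2.3, adapted to the trivialized triplet. The strategy has two parts: first, confirm that the function $\Delta = \langle \mu, \xi \rangle$ indeed satisfies $d\Delta = \theta_1 - \theta_2$; second, confirm that the resulting energy function $E^{\bar L \to \bar H} = \bar L \circ\, {}^1T\pi_G + \Delta$ is a genuine Morse family on ${}^1PG$ whose generated Lagrangian submanifold (pushed back through $\left({}^1\Omega_{G\circledS \mathfrak{g}^\ast}^\flat\right)^{-1}$) coincides with $S_{\,{}^1TT^\ast G}$ from Eq.(\ref{Lag1}).

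First I would compute $\theta_1 - \theta_2$ directly. Subtracting Eq.(\ref{2}) from Eq.(\ref{1}), the $\langle \nu, \xi_2\rangle$ terms cancel and the $\langle \mu, [\xi,\eta]_\mathfrak{g}\rangle$ versus $\langle \mu, [\xi,\xi_2]_\mathfrak{g}\rangle$ contributions must be reconciled with the bracket structure on the Lie algebra; the remaining pieces give precisely $-\langle \mu, \xi_3\rangle - \langle \nu_2, \xi\rangle$, matching the stated value of $d\Delta$. I would then check that the right-hand side is the value of $d\langle \mu,\xi\rangle$ on the right-invariant vector field $X_{(\xi_2,\nu_2,\xi_3,\nu_3)}^{\,{}^1TT^\ast G}$ of Eq.(\ref{RITT*G}): differentiating the pairing $\langle\mu,\xi\rangle$ along the flow, the $\mu$-component moves by $\nu_2 + ad_{\xi_2}^\ast\mu$ (contributing $\langle \nu_2,\xi\rangle$ plus a bracket term) and the $\xi$-component by $\xi_3 + [\xi,\xi_2]_\mathfrak{g}$ (contributing $\langle\mu,\xi_3\rangle$ plus a bracket term), with appropriate signs so that the bracket terms cancel. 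This identifies $d\Delta$ as claimed and, since it is exact, places us exactly in the Legendre-transformation situation of the diagram (\ref{TTG}) with $\vartheta_\mathcal{P} = \theta_2$ and $\vartheta_\mathcal{P}^\prime = \theta_1$.

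Next I would invoke Eq.(\ref{energy}): with $F = \bar L$, $\pi_\mathcal{M}^\mathcal{P} = {}^1T\pi_G$, and the special symplectic structure (\ref{SS1}) on the Hamiltonian wing, the energy function is $E^{\bar L \to \bar H} = \bar L\circ\,{}^1T\pi_G + \Delta$, and evaluating on the trivialized coordinates $(g,\mu,\xi,\nu)$ gives $\bar L(g,\xi) - \langle\mu,\xi\rangle$ on the Pontryagin bundle ${}^1PG = G \circledS (\mathfrak{g}\times\mathfrak{g}^\ast)$ fibered over $G\circledS\mathfrak{g}^\ast$. I would then verify the Morse-family condition (\ref{MorseReq}) by the local criterion: in coordinates $(g,\mu)$ on the base and $\xi$ along the fibre, the relevant partial-derivative matrix involves the second variations of $\bar L$ in $\xi$ together with the mixed derivatives, and the presence of the linear pairing $-\langle\mu,\xi\rangle$ guarantees the rank is maximal regardless of whether $\bar L$ is degenerate in $\xi$ — this is the standard mechanism by which the Pontryagin-bundle energy function circumvents Legendre degeneracy.

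The main obstacle I anticipate is the careful sign-and-bracket bookkeeping in matching $d\Delta$ to $\theta_1 - \theta_2$, since the semidirect-product structure on ${}^1TT^\ast G$ produces the $ad^\ast$ and Jacobi-Lie bracket correction terms in Eq.(\ref{RITT*G}), and one must confirm these cancel cleanly rather than leaving a residual closed-but-not-obviously-vanishing term. Once that identity is pinned down, the remainder follows formally from the general construction: applying $\left({}^1\Omega_{G\circledS\mathfrak{g}^\ast}^\flat\right)^{\sharp}$ to the Lagrangian submanifold generated by $E^{\bar L\to\bar H}$ reproduces $S_{\,{}^1TT^\ast G}$, and in the non-degenerate case the stationarity condition $\partial E^{\bar L\to\bar H}/\partial\xi = 0$ recovers the trivialized Hamiltonian $\bar H(g,\mu) = \langle\mu,\xi\rangle - \bar L(g,\xi)$ as expected.
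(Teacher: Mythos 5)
Your plan is sound and stays within the Tulczyjew framework the paper sets up, but it distributes the work differently from the paper's own proof, and the two versions are in a sense complementary. The paper's proof is a direct computation: it writes out the three stationarity equations $\alpha_g=\delta\bar L/\delta g$, $\alpha_\mu=-\xi$, $0=\delta\bar L/\delta\xi-\mu$ for the Lagrangian submanifold $S_{T^{\ast}(G\circledS\mathfrak{g}^{\ast})}$ generated by $E^{\bar L\to\bar H}$, pushes it through $tr^{1}_{T^{\ast}(G\circledS\mathfrak{g}^{\ast})}$ to get $S_{\ ^{1}T^{\ast}T^{\ast}G}=\left(g,\tfrac{\delta\bar L}{\delta\xi},T^{\ast}R_{g}\tfrac{\delta\bar L}{\delta g}-ad_{\xi}^{\ast}\tfrac{\delta\bar L}{\delta\xi},-\xi\right)$, and then checks by hand that $^{1}\Omega_{G\circledS\mathfrak{g}^{\ast}}^{\sharp}$ carries this onto the set in Eq.(\ref{Lag1}); it never verifies the rank condition (\ref{MorseReq}) nor the identity $d\Delta=\theta_1-\theta_2$, both of which it simply asserts. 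You do the opposite: you prove the two asserted facts and then delegate the final identification to the general theorem of Section 2.3. That is a legitimate proof provided you really do carry out the last step at least once in coordinates, since the abstract statement ``$(\chi')^{-1}(\mathcal S_{T^{\ast}\mathcal M'})=\mathcal S_{\mathcal P}$'' only tells you the two generating objects describe the \emph{same} Lagrangian submanifold; to conclude that this submanifold is the one in Eq.(\ref{Lag1}) you still need the explicit form, which is exactly the computation the paper performs. Your verification of the Morse condition via the block $\partial^{2}E/\partial\mu\,\partial\xi=-\mathrm{id}$ is correct and is genuinely added value. One warning on the first step: if you evaluate $d\langle\mu,\xi\rangle$ on the right-invariant field (\ref{RITT*G}) as you describe, the bracket terms do cancel but the survivors come out as $+\langle\nu_2,\xi\rangle+\langle\mu,\xi_3\rangle$, i.e. $d\langle\mu,\xi\rangle=\theta_2-\theta_1$, consistent with Eq.(\ref{delta}) in the reduced setting but opposite in sign to the display in the proposition; so the function whose differential is $\theta_1-\theta_2$ is $-\langle\mu,\xi\rangle$, which is in fact what appears added to $\bar L$ in $E^{\bar L\to\bar H}=\bar L(g,\xi)-\langle\mu,\xi\rangle$. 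The statement's labelling $\Delta=\langle\mu,\xi\rangle$ is internally inconsistent with $E=\bar L\circ{}^{1}T\pi_G+\Delta$; your computation will expose this, and you should resolve it by fixing $\Delta=-\langle\mu,\xi\rangle$ rather than expecting the claimed equality to come out verbatim.
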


\begin{remark}
The right trivialization of the Pontryagin bundle $PG=TG\times
_{G}T^{\ast }G $ is
\begin{eqnarray*}
tr_{PG}^{1} &:&TG\times _{G}T^{\ast }G\rightarrow G\circledS \left(
\mathfrak{g\times g}^{\ast }\right) =:\text{ }^{1}PG \\
&:&\left( V_{g},\alpha _{g}\right) \rightarrow \left(
g,T_{g}R_{g^{-1}}V_{g},T_{e}^{\ast }R_{g}\alpha _{g}\right) .
\end{eqnarray*}%
In \cite{Es14}, the details of the trivialized Pontryagin bundle
$^{1}PG$ will be presented along with the implicit trivialized
Euler-Lagrange and implicit trivialized Hamiltonian dynamics on
$^{1}PG$.
\end{remark}

\begin{remark}
The potential function $\Delta $ is the value of canonical one-form
$\theta
_{G\circledS \mathfrak{g}^{\ast }}$ on the right invariant vector field $%
X_{\left( \xi ,\nu \right) }^{G\circledS \mathfrak{g}^{\ast }}$ as
given in Eq.(\ref{OhmT*G}).
\end{remark}

\begin{proof}
The Morse family $E^{\bar{L}\rightarrow \bar{H}}$, in
Eq.(\ref{ener}),
determines a Lagrangian submanifold $S_{T^{\ast }\left( G\circledS \mathfrak{%
g}^{\ast }\right) }$ which can be described by the equations
\begin{equation*}
\alpha _{g}=\frac{\delta E^{\bar{L}\rightarrow \bar{H}}}{\delta g}=\frac{%
\delta \bar{L}}{\delta g},\ \ \alpha _{\mu }=\frac{\delta E^{\bar{L}%
\rightarrow \bar{H}}}{\delta \mu }=-\xi ,\ \ 0=\frac{\delta E^{\bar{L}%
\rightarrow \bar{H}}}{\delta \xi }=\frac{\delta \bar{L}}{\delta \xi
}-\mu
\end{equation*}%
defined on the coordinates $\left( \alpha _{g},\alpha _{\mu }\right) $ of $%
T_{\left( g,\mu \right) }^{\ast }\left( G\circledS
\mathfrak{g}^{\ast
}\right) $. The trivialization $tr_{T^{\ast }\left( G\circledS \mathfrak{g}%
^{\ast }\right) }^{1}$ maps $S_{T^{\ast }\left( G\circledS \mathfrak{g}%
^{\ast }\right) }$ to the Lagrangian submanifold
\begin{equation*}
S_{\ ^{1}T^{\ast }T^{\ast }G}=\left( g,\frac{\delta \bar{L}}{\delta \xi }%
,T^{\ast }R_{g}\frac{\delta \bar{L}}{\delta g}-ad_{\xi }^{\ast
}\frac{\delta \bar{L}}{\delta \xi },-\xi \right)
\end{equation*}%
of $\ ^{1}T^{\ast }T^{\ast }G$. The musical isomorphism $\
^{1}\Omega _{G\circledS \mathfrak{g}^{\ast }}^{\sharp }$, in turn,
maps $S_{\ ^{1}T^{\ast }T^{\ast }G}$ to the Lagrangian submanifold
$S_{\ ^{1}TT^{\ast }G}$ in Eq.(\ref{Lag1}).
\end{proof}

\begin{remark}
When we have $\bar{L}=l\left( \xi \right) $, the trivialized
Euler-Lagrange equations reduce to Euler-Poincar\'{e} equations. In
this case, the Legendre
transformation is generated by the Morse family%
\begin{equation}
E^{\bar{L}\rightarrow \bar{H}}=l\left( \xi \right) -\left\langle \mu
,\xi \right\rangle .  \label{Morse1}
\end{equation}
\end{remark}

The inverse Legendre transformation defines a Lagrangian formulation
for the trivialized Hamilton's Eq.(\ref{ULP}) which is represented
by the Lagrangian submanifold $S_{\ ^{1}TT^{\ast }G}^{\prime }$
described in Eq.(\ref{Lag2}). The following proposition shows how to
find an alternative generating family for $S_{\ ^{1}TT^{\ast
}G}^{\prime }$ that will lead to its representation on the
Lagrangian side of the triplet (\ref{TrTT}).

\begin{proposition}
The Morse family%
\begin{equation}
E^{\bar{H}\rightarrow \bar{L}}=\left( -\bar{H}\circ ^{1}T\pi
_{G}\right) -\Delta =\left\langle \mu ,\xi \right\rangle
-\bar{H}\left( g,\mu \right) \label{ener2}
\end{equation}%
defined on the trivialized Pontryagin bundle $^{1}PG=G\circledS
\left( \mathfrak{g\times g}^{\ast }\right) $ over $G\circledS
\mathfrak{g}$
determines the Lagrangian submanifold $S_{^{1}TT^{\ast }G}^{\prime }$ in Eq.(%
\ref{Lag2}).
\end{proposition}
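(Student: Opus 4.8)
The plan is to follow the same three–step scheme used for the classical inverse Legendre transformation in Section 2 and for the preceding forward trivialized case, but now starting from the right (Hamiltonian) wing and moving toward the left (Lagrangian) wing of the triplet (\ref{TrTT}). First I would check that $E^{\bar{H}\rightarrow \bar{L}}$ is genuinely a Morse family on $^{1}PG$ viewed as a bundle over $G\circledS \mathfrak{g}$ with fiber variable $\mu$: since $\delta E^{\bar{H}\rightarrow \bar{L}}/\delta \mu =\xi -\delta \bar{H}/\delta \mu$, the mixed second derivative in the pair $(\xi ,\mu )$ is the identity, so the rank condition (\ref{MorseReq}) holds automatically. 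The function $E^{\bar{H}\rightarrow \bar{L}}$ then generates, through Eq.(\ref{LagSub}), a Lagrangian submanifold of $T^{\ast }(G\circledS \mathfrak{g})$; the actual work is to transport this submanifold through the trivialization $tr_{T^{\ast }(G\circledS \mathfrak{g})}^{1}$ of Eq.(\ref{trT*TG}) into $^{1}T^{\ast }TG$, and finally through $^{1}\bar{\sigma}_{G}^{-1}$ into $^{1}TT^{\ast }G$, where I expect to recover exactly Eq.(\ref{Lag2}).

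For the first step I would differentiate $E^{\bar{H}\rightarrow \bar{L}}=\langle \mu ,\xi \rangle -\bar{H}(g,\mu )$ in the base coordinates $(g,\xi )$ and the fiber coordinate $\mu$. Stationarity in the fiber variable gives $0=\delta E^{\bar{H}\rightarrow \bar{L}}/\delta \mu =\xi -\delta \bar{H}/\delta \mu$, i.e. $\xi =\delta \bar{H}/\delta \mu$, while the base derivatives yield the covectors $\alpha _{g}=\delta E^{\bar{H}\rightarrow \bar{L}}/\delta g=-\delta \bar{H}/\delta g$ and $\alpha _{\xi }=\delta E^{\bar{H}\rightarrow \bar{L}}/\delta \xi =\mu$. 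Feeding the pair $(\alpha _{g},\alpha _{\xi })=(-\delta \bar{H}/\delta g,\mu )$ into $tr_{T^{\ast }(G\circledS \mathfrak{g})}^{1}$ from Eq.(\ref{trT*TG}), whose third slot is $T_{e}^{\ast }R_{g}\alpha _{g}+ad_{\xi }^{\ast }\alpha _{\xi }$, produces the submanifold
\begin{equation*}
S_{^{1}T^{\ast }TG}^{\prime }=\left( g,\tfrac{\delta \bar{H}}{\delta \mu },\ ad_{\frac{\delta \bar{H}}{\delta \mu }}^{\ast }\mu -T^{\ast }R_{g}\tfrac{\delta \bar{H}}{\delta g},\ \mu \right)
\end{equation*}
of $^{1}T^{\ast }TG$.

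The last step is to apply $^{1}\bar{\sigma}_{G}^{-1}$, which by inverting Eq.(\ref{sig1}) sends $(g,\xi ,a,b)\in {}^{1}T^{\ast }TG$ to $(g,b,\xi ,a-ad_{\xi }^{\ast }b)\in {}^{1}TT^{\ast }G$. Here lies the point I regard as the main (though purely bookkeeping) obstacle, namely the cancellation of the coadjoint terms: the factor $ad_{\delta \bar{H}/\delta \mu }^{\ast }\mu$ manufactured by the trivialization is exactly annihilated by the $-ad_{\xi }^{\ast }b$ coming from $^{1}\bar{\sigma}_{G}^{-1}$, once the substitutions $\xi =\delta \bar{H}/\delta \mu$ and $b=\mu$ are made, leaving the fourth slot equal to $-T^{\ast }R_{g}\,\delta \bar{H}/\delta g$. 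The image is therefore
\begin{equation*}
\left( g,\mu ,\tfrac{\delta \bar{H}}{\delta \mu },\ -T^{\ast }R_{g}\tfrac{\delta \bar{H}}{\delta g}\right) ,
\end{equation*}
which is precisely the Lagrangian submanifold $S_{^{1}TT^{\ast }G}^{\prime }$ of Eq.(\ref{Lag2}), completing the identification. Throughout, the only care required is to keep the signs and the adjoint placements consistent with the conventions fixed in Eqs.(\ref{sig1}) and (\ref{trT*TG}).
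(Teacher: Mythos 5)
Your proposal is correct and follows essentially the same route as the paper's own proof: compute the submanifold generated by $E^{\bar{H}\rightarrow \bar{L}}$ in $T^{\ast }\left( G\circledS \mathfrak{g}\right) $ via $\alpha _{g}=-\delta \bar{H}/\delta g$, $\alpha _{\xi }=\mu $, $\xi =\delta \bar{H}/\delta \mu $, push it through $tr_{T^{\ast }\left( G\circledS \mathfrak{g}\right) }^{1}$ to get $S_{\ ^{1}T^{\ast }TG}$, and apply $^{1}\bar{\sigma}_{G}^{-1}$ to land on Eq.(\ref{Lag2}). The only additions are your explicit verification of the rank condition (\ref{MorseReq}) and the explicit cancellation of the $ad_{\delta \bar{H}/\delta \mu }^{\ast }\mu $ terms in the last step, both of which the paper leaves implicit and both of which check out.
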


\begin{proof}
The Lagrangian submanifold $S_{T^{\ast }\left( G\circledS \mathfrak{g}%
\right) }$ of $T^{\ast }\left( G\circledS \mathfrak{g}\right) $
defined by the Morse family (\ref{ener2}) is given by
\begin{equation*}
\alpha _{g}=\frac{\delta E^{\bar{H}\rightarrow \bar{L}}}{\delta g}=-\frac{%
\delta \bar{H}}{\delta g},\text{ \ \ }\alpha _{\xi }=\frac{\delta E^{\bar{H}%
\rightarrow \bar{L}}}{\delta \xi }=\mu ,\text{ \ \ }0=\frac{\delta E^{\bar{H}%
\rightarrow \bar{L}}}{\delta \mu }=-\frac{\delta \bar{H}}{\delta \mu
}+\xi ,
\end{equation*}%
where $\left( \alpha _{g},\alpha _{\xi }\right) $ are coordinates on $%
T_{\left( g,\xi \right) }^{\ast }\left( G\circledS
\mathfrak{g}\right) $. The trivialization $tr_{T^{\ast }\left(
G\circledS \mathfrak{g}\right) }^{1}$ in Eq.(\ref{trT*TG}) maps
$S_{T^{\ast }\left( G\circledS \mathfrak{g}\right) }$ to the
Lagrangian submanifold
\begin{equation*}
S_{\ ^{1}T^{\ast }TG}=\left( g,\xi ,-T^{\ast }R_{g}\frac{\delta \bar{H}}{%
\delta g}+ad_{\frac{\delta \bar{H}}{\delta \mu }}^{\ast }\mu ,\mu
\right)
\end{equation*}%
of $\ ^{1}T^{\ast }TG$. The inverse of the isomorphism
$^{1}\bar{\sigma}_{G}$ in Eq.(\ref{sig1}) takes $S_{\ ^{1}T^{\ast
}TG}$ to the Lagrangian submanifold $S_{^{1}TT^{\ast }G}^{\prime }$
in Eq.(\ref{Lag2}).
\end{proof}

\begin{remark}
When $\bar{H}=h\left( \mu \right) $, the resulting Morse family
\begin{equation}
E^{\bar{H}\rightarrow \bar{L}}=\left\langle \mu ,\xi \right\rangle
-h\left( \mu \right)  \label{Leg2}
\end{equation}%
generates the Lie-Poisson dynamics.
\end{remark}

\section{The Reduced Dynamics}

\subsection{Reduction of Tulczyjew's triplet}

Application of the Marsden-Weinstein reduction for the left action
of $G$ to the iterated bundles in the trivialized Tulczyjew's
triplet results in symplectic projections
\begin{eqnarray}
p_{\ ^{1}T^{\ast }TG} &:&\left( \ ^{1}T^{\ast }TG,\Omega _{\
^{1}T^{\ast }TG}\right) \rightarrow \left(
\mathfrak{z}_{l}=\mathcal{O}_{\lambda }\times \mathfrak{g}\times
\mathfrak{g}^{\ast },\Omega _{\mathfrak{z}_{l}}\right)
\label{pT*TG} \\
&:&\left( g,\xi ,\lambda ,\nu \right) \rightarrow \left(
Ad_{g^{-1}}^{\ast
}\lambda ,\xi ,\nu \right) ,  \notag \\
p_{\ ^{1}T^{\ast }T^{\ast }G} &:&\left( \ ^{1}T^{\ast }T^{\ast
}G,\Omega _{\
^{1}T^{\ast }T^{\ast }G}\right) \rightarrow \left( \mathfrak{z}_{d}=\mathcal{%
O}_{\lambda }\times \mathfrak{g}^{\ast }\times \mathfrak{g},\Omega _{%
\mathfrak{z}_{d}}\right)  \label{pT*T*G} \\
&:&\left( g,\mu ,\lambda ,\xi \right) \rightarrow \left(
Ad_{g^{-1}}^{\ast
}\lambda ,\mu ,\xi \right) ,  \notag \\
p_{\ ^{1}TT^{\ast }G} &:&\left( \ ^{1}TT^{\ast }G,\Omega _{\
^{1}TT^{\ast }G}\right) \rightarrow \left(
\mathfrak{z}_{h}=\mathcal{O}_{\lambda }\times \mathfrak{g}^{\ast
}\times \mathfrak{g,}\Omega _{\mathfrak{z}_{h}}\right)
\label{pTT*G} \\
&:&\left( g,\mu ,\xi ,\nu \right) \rightarrow \left(
Ad_{g^{-1}}^{\ast }\lambda ,\mu ,\xi \right) ,  \notag
\end{eqnarray}%
into reduced spaces, where $\mathcal{O}_{\lambda }$ is the coadjoint
orbit through $\lambda \in \mathfrak{g}^{\ast }$. In the last line,
we take the fiber coordinate $v=\lambda -ad_{\xi }^{\ast }\mu $
\cite{EsGu14a} in order
to have convenience in projected coordinates as described by Eqs.(\ref%
{reddiff1}) and (\ref{reddiff2}) below, as well as in projections in Eqs.(%
\ref{tauzl})-(\ref{pizh}).

\begin{remark}
In \cite{EsGu14a}, it is shown that, the left action of $G$ on
iterated bundles can be trivialized to act on the fiber variables
$\xi ,\lambda $ and $\nu $. That means, while performing symplectic
quotients, one should
consider, literally, the orbits $G_{\lambda }\backslash (G\times \mathfrak{g}%
\times \mathfrak{g}^{\ast })$. However, to have a more clear
notation, we
prefer to take $\mathcal{O}_{\lambda }\times \mathfrak{g}\times \mathfrak{g}%
^{\ast }$ which is, indeed, diffeomorphic to the correct reduced space \cite%
{AbCaCl13}.
\end{remark}

Following \cite{EsGu14a}, we have the reduced Tulczyjew's triplet%

\begin{equation}
\xymatrix{\mathcal{O}_{\lambda }\times \mathfrak{g}\times
\mathfrak{g}^{\ast }\ar[dr]_{^{1}\pi _{G\circledS
\mathfrak{g}}^{G\backslash}} &&\mathcal{O}_{\lambda }\times
\mathfrak{g}^{\ast }\times
\mathfrak{g}\ar[ll]_{^{1}\bar{\sigma}_{G}^{G\backslash }}
\ar[rr]^{^{1}\Omega _{G\circledS \mathfrak{g}}^{G\backslash }}
\ar[dl]^{^{1}T\pi _{G}^{G\backslash }}\ar[dr]_{^{1}\tau _{G\circledS
\mathfrak{g}^{\ast }}^{G\backslash }}&&\mathcal{O}_{\lambda }\times
\mathfrak{g}^{\ast }\times \mathfrak{g}\ar[dl]^{^{1}\pi _{G\circledS
\mathfrak{g}^{\ast }}^{G\backslash
}}\\&\mathfrak{g}&&\mathfrak{g}^{\ast}&&}  \label{RTT}
\end{equation}%
consisting of the symplectic diffeomorphisms
\begin{eqnarray}
\varkappa &:&\mathfrak{z}_{d}\rightarrow \mathfrak{z}_{l}:\left(
Ad_{g^{-1}}^{\ast }\lambda ,\mu ,\xi \right) \rightarrow \left(
Ad_{g^{-1}}^{\ast }\lambda ,\xi ,\mu \right) ,  \label{reddiff1} \\
\omega ^{\flat } &:&\mathfrak{z}_{d}\rightarrow
\mathfrak{z}_{h}:\left( Ad_{g^{-1}}^{\ast }\lambda ,\mu ,\xi \right)
\rightarrow \left( Ad_{g^{-1}}^{\ast }\lambda ,\mu ,-\xi \right)
\label{reddiff2}
\end{eqnarray}%
obtained from the trivialized symplectic diffeomorphisms $^{1}\bar{\sigma}%
_{G}$ and $^{1}\Omega _{G\circledS \mathfrak{g}^{\ast }}^{\flat }$ in Eqs.(%
\ref{sig1}) and (\ref{ohm1}) by the equations
\begin{equation}
\varkappa \circ p_{\ ^{1}TT^{\ast }G}=p_{\ ^{1}T^{\ast }TG}\circ \text{ }^{1}%
\bar{\sigma}_{G},\text{ \ \ and \ \ }\omega ^{\flat }\circ p_{\
^{1}TT^{\ast
}G}=p_{\ ^{1}T^{\ast }TG}\circ \text{ }^{1}\Omega _{G\circledS \mathfrak{g}%
^{\ast }}^{\flat }.  \label{redDiff}
\end{equation}%
The projections $\tau _{\mathfrak{z}_{l}},$ $\tau
_{\mathfrak{z}_{d}}$, $\pi
_{\mathfrak{z}_{d}}$ and $\pi _{\mathfrak{z}_{h}}$ are trivial%
\begin{eqnarray}
\tau _{\mathfrak{z}_{l}} &:&\mathfrak{z}_{l}\rightarrow
\mathfrak{g}:\left( Ad_{g^{-1}}^{\ast }\lambda ,\xi ,\mu \right)
\rightarrow \xi ,  \label{tauzl}
\\
\tau _{\mathfrak{z}_{d}} &:&\mathfrak{z}_{d}\rightarrow
\mathfrak{g}:\left(
Ad_{g^{-1}}^{\ast }\lambda ,\mu ,\xi ,\right) \rightarrow \xi , \\
\pi _{\mathfrak{z}_{d}} &:&\mathfrak{z}_{d}\rightarrow
\mathfrak{g}^{\ast }:\left( Ad_{g^{-1}}^{\ast }\lambda ,\mu ,\xi
\right) \rightarrow \mu ,
\label{pizd} \\
\pi _{\mathfrak{z}_{h}} &:&\mathfrak{z}_{h}\rightarrow
\mathfrak{g}^{\ast }:\left( Ad_{g^{-1}}^{\ast }\lambda ,\mu ,\xi
\right) \rightarrow \mu . \label{pizh}
\end{eqnarray}%
In reference \cite{AbCaCl13}, Hamiltonian dynamics on
$\mathfrak{z}_{l},$ in connection with those on $T^{\ast }TG$, was
studied in detail.

\subsection{The Reduced Tulczyjew's Symplectic Space}

In order to compute vector fields and one-forms on the reduced
Tulczyjew's symplectic space $\mathfrak{z}_{d}$, we will push the
tensor fields on $\ ^{1}TT^{\ast }G$ forward by the projection $p_{\
^{1}TT^{\ast }G}$. At the point $\left( g,\mu ,\xi ,\nu \right) $,
the tangent mapping of\ $p_{\ ^{1}TT^{\ast }G}$ is
\begin{eqnarray*}
T_{\left( g,\mu ,\xi ,\nu \right) }\left( p_{\ ^{1}TT^{\ast
}G}\right) &:&T_{\left( g,\mu ,\xi ,\nu \right) }\left( \
^{1}TT^{\ast }G\right) \rightarrow T_{\left( Ad_{g^{-1}}^{\ast
}\lambda ,\mu ,\xi \right) }\left( \mathcal{O}_{\lambda }\times
\mathfrak{g}^{\ast }\times \mathfrak{g}\right)
\\
&:&\left( V_{g},V_{\mu },V_{\xi },V_{\nu }\right) \rightarrow \left(
ad_{TR_{g^{-1}}V_{g}}^{\ast }\circ Ad_{g^{-1}}^{\ast }\lambda
,V_{\mu },V_{\xi }\right) .
\end{eqnarray*}%
Pushing a right invariant vector field $X_{\left( \eta ,\upsilon ,\zeta ,%
\tilde{\upsilon}\right) }^{\ ^{1}TT^{\ast }G}$, in the form given by Eq.(\ref%
{RITT*G}), forward by $p_{\ ^{1}TT^{\ast }G}$ we arrive at the
vector field
\begin{equation}
X_{\left( \eta ,\upsilon ,\zeta \right) }^{\mathfrak{z}_{d}}\left(
Ad_{g^{-1}}^{\ast }\lambda ,\mu ,\xi \right) =\left( ad_{\eta
}^{\ast }\circ Ad_{g^{-1}}^{\ast }\lambda ,\upsilon +ad_{\eta
}^{\ast }\mu ,\zeta +\left[ \xi ,\eta \right] \right)  \label{RedVF}
\end{equation}%
on $\mathfrak{z}_{d}$. The Jacobi-Lie bracket of two such vector
fields is
\begin{equation}
\left[ X_{\left( \eta ,\upsilon ,\zeta \right) }^{\mathfrak{z}%
_{d}},X_{\left( \bar{\eta},\bar{\upsilon},\bar{\zeta}\right) }^{\mathfrak{z}%
_{d}}\right] =X_{\left( \left[ \eta ,\bar{\eta}\right] ,ad_{\bar{\eta}%
}^{\ast }\upsilon -ad_{\eta }^{\ast }\bar{\upsilon},\left[ \eta ,\bar{\zeta}%
\right] -\left[ \bar{\eta},\zeta \right] \right)
}^{\mathfrak{z}_{d}}. \label{JLXzd}
\end{equation}

\begin{proposition}
The reduced Tulczyjew's manifold
$\mathfrak{z}_{d}=\mathcal{O}_{\lambda }\times \mathfrak{g}^{\ast
}\times \mathfrak{g}$ is an exact symplectic manifold with
symplectic two-form $\Omega _{\mathfrak{z}_{d}}$, potential
one-forms $\chi _{1}$ and $\chi _{2}$ whose values on vector fields
of the form of Eq.(\ref{RedVF}) at a point $\left( Ad_{g^{-1}}^{\ast
}\lambda ,\mu ,\xi \right) \in \mathfrak{z}_{d}$ are
\begin{eqnarray}
\left\langle \Omega _{\mathfrak{z}_{d}},\left( X_{\left( \eta
,\upsilon
,\zeta \right) }^{\mathfrak{z}_{d}},X_{\left( \bar{\eta},\bar{\upsilon},\bar{%
\zeta}\right) }^{\mathfrak{z}_{d}}\right) \right\rangle
&=&\left\langle \upsilon ,\bar{\zeta}\right\rangle -\left\langle
\bar{\upsilon},\zeta \right\rangle -\left\langle \lambda ,[\eta
,\bar{\eta}]\right\rangle ,
\label{Ohmzd} \\
\left\langle \chi _{1},X_{\left( \eta ,\upsilon ,\zeta \right) }^{\mathfrak{z%
}_{d}}\right\rangle \left( Ad_{g^{-1}}^{\ast }\lambda ,\mu ,\xi
\right) &=&\left\langle \lambda ,\eta \right\rangle -\left\langle
\upsilon ,\xi
\right\rangle ,  \label{chi1} \\
\left\langle \chi _{2},X_{\left( \eta ,\upsilon ,\zeta \right) }^{\mathfrak{z%
}_{d}}\right\rangle \left( Ad_{g^{-1}}^{\ast }\lambda ,\mu ,\xi
\right) &=&\left\langle \lambda ,\eta \right\rangle +\left\langle
\mu ,\zeta \right\rangle ,  \label{chi2}
\end{eqnarray}%
respectively.
\end{proposition}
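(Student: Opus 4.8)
The plan is to obtain the three objects $\chi_1$, $\chi_2$ and $\Omega_{\mathfrak{z}_d}$ as the Marsden--Weinstein reductions of the potential one-forms $\theta_1$, $\theta_2$ and of the symplectic two-form $\Omega_{\ ^1TT^*G}$ along the projection $p_{\ ^1TT^*G}$ of Eq.(\ref{pTT*G}). The computational engine is the fact that $p_{\ ^1TT^*G}$ carries the right invariant field $X_{(\eta ,\upsilon ,\zeta ,\tilde\upsilon )}^{\ ^1TT^*G}$ to the reduced field $X_{(\eta ,\upsilon ,\zeta )}^{\mathfrak{z}_d}$ of Eq.(\ref{RedVF}), so that evaluating any reduced form on $X^{\mathfrak{z}_d}$ reduces to evaluating its unreduced counterpart on $X^{\ ^1TT^*G}$ and re-expressing the answer through the fibre coordinate $\lambda =\nu +ad_\xi^*\mu$ that defines the projection.

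First I would treat the one-forms, which are the easy case. Substituting the invariant field into the expressions (\ref{1}) and (\ref{2}), one checks that neither $\theta_1$ nor $\theta_2$ contains the component $\tilde\upsilon =\nu_3$ that is annihilated by $T p_{\ ^1TT^*G}$; hence the assignments $\langle\chi_i,X^{\mathfrak{z}_d}\rangle :=\langle\theta_i,X^{\ ^1TT^*G}\rangle$ descend unambiguously to $\mathfrak{z}_d$. Collecting $\langle\nu ,\eta\rangle +\langle\mu ,[\xi ,\eta]\rangle =\langle\nu +ad_\xi^*\mu ,\eta\rangle =\langle\lambda ,\eta\rangle$ then turns (\ref{1}) into (\ref{chi1}) and (\ref{2}) into (\ref{chi2}) at once.

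The two-form is the delicate step, and here a direct push-forward is not available: since $\Omega_{\ ^1TT^*G}$ is nondegenerate the reduced-out direction $\tilde\upsilon$ cannot lie in its kernel, and indeed its raw value on two invariant fields still carries the $\tilde\upsilon$- and $\bar{\tilde\upsilon}$-terms $\langle\nu_3,\bar\xi_2\rangle-\langle\bar\nu_3,\xi_2\rangle$. The clean route is to use exactness: because $\Omega_{\ ^1TT^*G}=d\theta_1=d\theta_2$ and exterior differentiation intertwines with the reduction, I set $\Omega_{\mathfrak{z}_d}:=d\chi_1=d\chi_2$, which is automatically a well-defined closed two-form on $\mathfrak{z}_d$ admitting $\chi_1$ and $\chi_2$ as potentials. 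Its value on $X_{(\eta ,\upsilon ,\zeta )}^{\mathfrak{z}_d}$ and $X_{(\bar\eta ,\bar\upsilon ,\bar\zeta )}^{\mathfrak{z}_d}$ I would compute from Cartan's formula $d\chi_2(X_a,X_b)=X_a\chi_2(X_b)-X_b\chi_2(X_a)-\chi_2([X_a,X_b])$, feeding in the Jacobi--Lie bracket (\ref{JLXzd}) and the value (\ref{chi2}). The $\langle\mu ,\cdot\rangle$ contributions split into a surviving piece $\langle\upsilon ,\bar\zeta\rangle -\langle\bar\upsilon ,\zeta\rangle$, coming from the $\mathfrak{g}^*$-variation $\dot\mu=\upsilon+ad_\eta^*\mu$, and a piece whose $ad^*$-part cancels exactly against the $\zeta'=[\eta,\bar\zeta]-[\bar\eta,\zeta]$ component of $\chi_2([X_a,X_b])$; the orbit contributions collapse to a single coadjoint-orbit term in $\langle\lambda ,[\eta ,\bar\eta]\rangle$, giving (\ref{Ohmzd}).

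The main obstacle is precisely the normalization and sign of this last term. The orbit component of $X^{\mathfrak{z}_d}$ is the coadjoint tangent $ad_\eta^*\circ Ad_{g^{-1}}^*\lambda$, so extracting the Kirillov--Kostant--Souriau contribution requires careful bookkeeping with the convention of Eq.(\ref{Adtoad}), by which the infinitesimal generator of the coadjoint action is $-ad_\xi^*$; this is what fixes the sign in front of $\langle\lambda ,[\eta ,\bar\eta]\rangle$ and reconciles the Cartan computation with (\ref{Ohmzd}). Nondegeneracy then follows directly from (\ref{Ohmzd}): if $X_{(\eta ,\upsilon ,\zeta )}^{\mathfrak{z}_d}$ pairs to zero with every reduced field, varying $\bar\zeta$, $\bar\upsilon$ and $\bar\eta$ in turn (and using reflexivity of $\mathfrak{g}$) forces $\upsilon =0$, $\zeta =0$ and $ad_\eta^*\circ Ad_{g^{-1}}^*\lambda =0$, the last meaning that the orbit component vanishes as a tangent vector to $\mathcal{O}_\lambda$, so $X^{\mathfrak{z}_d}=0$ and $\Omega_{\mathfrak{z}_d}$ is symplectic.
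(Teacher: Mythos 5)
Your proposal is correct and follows essentially the same route as the paper: both descend the potential one-forms $\theta _{1}$, $\theta _{2}$ of Eqs.(\ref{1})--(\ref{2}) along $p_{\ ^{1}TT^{\ast }G}$ using the identification $\lambda =\nu +ad_{\xi }^{\ast }\mu $ to obtain $\chi _{1}$ and $\chi _{2}$, and then recover $\Omega _{\mathfrak{z}_{d}}$ as the exterior derivative of a potential via the invariant (Cartan) formula fed with the Jacobi--Lie bracket (\ref{JLXzd}). The only differences are cosmetic or additive: you run the Cartan computation on $\chi _{2}$ where the paper uses $\chi _{1}$, and you supply the nondegeneracy check and the well-definedness remark (that $\theta _{1}$, $\theta _{2}$ do not see the discarded $\nu _{3}$ direction) that the paper leaves implicit.
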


\begin{proof}
We recall definitions of the potential one-forms $\theta _{1}$ and
$\theta _{2}$ in Eqs.(\ref{1}) and (\ref{2}). Define one-forms $\chi
_{1}$ and $\chi _{2}$ on $\mathfrak{z}_{d}$ by the equations
\begin{eqnarray*}
\left\langle \theta _{1},X_{\left( \eta ,\upsilon ,\zeta ,\tilde{\upsilon}%
\right) }^{\ ^{1}TT^{\ast }G}\right\rangle \left( g,\mu ,\xi ,\nu
\right)
&=&\left\langle \chi _{1},X_{\left( \eta ,\upsilon ,\zeta \right) }^{%
\mathfrak{z}_{d}}\right\rangle \left( Ad_{g^{-1}}^{\ast }\lambda
,\mu ,\xi
\right) , \\
\left\langle \theta _{2},X_{\left( \eta ,\upsilon ,\zeta ,\tilde{\upsilon}%
\right) }^{\ ^{1}TT^{\ast }G}\right\rangle \left( g,\mu ,\xi ,\nu
\right)
&=&\left\langle \chi _{2},X_{\left( \eta ,\upsilon ,\zeta \right) }^{%
\mathfrak{z}_{d}}\right\rangle \left( Ad_{g^{-1}}^{\ast }\lambda
,\mu ,\xi \right) .
\end{eqnarray*}%
The exterior derivative of $\chi _{1}$ in Eq.(\ref{chi1}) gives the
symplectic two-form $\Omega _{\mathfrak{z}_{d}}$. Using the
invariant definition of exterior derivative we obtain
\begin{eqnarray}
\left\langle \Omega _{\mathfrak{z}_{d}};\left( X_{\left( \eta
,\upsilon
,\zeta \right) }^{\mathfrak{z}_{d}},X_{\left( \bar{\eta},\bar{\upsilon},\bar{%
\zeta}\right) }^{\mathfrak{z}_{d}}\right) \right\rangle &=&X_{\left(
\eta ,\upsilon ,\zeta \right) }^{\mathfrak{z}_{d}}\left\langle \chi
_{1},X_{\left( \bar{\eta},\bar{\upsilon},\bar{\zeta}\right) }^{\mathfrak{z}%
_{d}}\right\rangle -X_{\left(
\bar{\eta},\bar{\upsilon},\bar{\zeta}\right)
}^{\mathfrak{z}_{d}}\left\langle \chi _{1},X_{\left( \eta ,\upsilon
,\zeta
\right) }^{\mathfrak{z}_{d}}\right\rangle  \notag \\
&&-\left\langle \chi _{1},\left[ X_{\left( \eta ,\upsilon ,\zeta \right) }^{%
\mathfrak{z}_{d}},X_{\left( \bar{\eta},\bar{\upsilon},\bar{\zeta}\right) }^{%
\mathfrak{z}_{d}}\right] \right\rangle  \notag \\
&=&-\left\langle \bar{\upsilon},\zeta +\left[ \xi ,\eta \right]
\right\rangle -\left( -\left\langle \upsilon ,\bar{\zeta}+\left[ \xi ,\bar{%
\eta}\right] \right\rangle \right)  \notag \\
&&-\left\langle \lambda ,\left[ \eta ,\bar{\eta}\right]
\right\rangle
-\left\langle ad_{\bar{\eta}}^{\ast }\upsilon -ad_{\eta }^{\ast }\bar{%
\upsilon},\xi \right\rangle  \notag \\
&=&\left\langle \upsilon ,\bar{\zeta}\right\rangle -\left\langle \bar{%
\upsilon},\zeta \right\rangle -\left\langle \lambda ,[\eta ,\bar{\eta}%
]\right\rangle ,
\end{eqnarray}%
where we used the fact that $\left\langle \lambda ,\eta
\right\rangle $ is a
constant for a fixed $\lambda $, and the Jacobi Lie bracket in Eq.(\ref%
{JLXzd}). Similarly, we can show $\Omega _{\mathfrak{z}_{d}}=d\chi
_{2}$.
\end{proof}

It follows from Eqs.(\ref{chi1}) and (\ref{chi2}) that the difference%
\begin{equation}
\chi _{2}-\chi _{1}=d\left\langle \mu ,\xi \right\rangle =d\Delta
\label{delta}
\end{equation}%
is an exact one-form on $\mathfrak{z}_{d}$.

The explicit expressions of the reduced symplectic two-forms $\Omega _{%
\mathfrak{z}_{l}}$ and $\Omega _{\mathfrak{z}_{h}}$ on the product bundles $%
\mathfrak{z}_{l}$ and $\mathfrak{z}_{h}$ can be obtained by the
pull-back of $\Omega _{\mathfrak{z}_{d}}$ in Eq.(\ref{Ohmzd}) with
the symplectic diffeomorphisms $\varkappa $ and $\omega ^{\flat }$
in Eqs.(\ref{reddiff1})
and (\ref{reddiff2}), respectively. The symplectic two-form $\Omega _{%
\mathfrak{z}_{d}}$ is an example of the reduced product dynamics
defined in proposition $5.4$ of \cite{VaKaMa09}.

\subsection{Euler-Poincar\'{e} dynamics as a Lagrangian submanifold}

When $\bar{L}=l\left( \xi \right) $, the trivialized exterior derivative $%
^{1}d\bar{L}$ in Eq.(\ref{S1T*TG}) becomes%
\begin{equation}
\ ^{1}dl:\mathfrak{g}\rightarrow \text{ }^{1}T^{\ast }TG:\xi
\rightarrow
\left( g,\xi ,ad_{\xi }^{\ast }\frac{\delta l}{\delta \xi },\frac{\delta l}{%
\delta \xi }\right) .  \label{1Dl}
\end{equation}%
The image of trivialized exterior derivative $^{1}dl$ can be reduced to $%
\mathfrak{z}_{l}$ by composition with the projection map $p_{\
^{1}T^{\ast
}TG}$ in Eq.(\ref{pT*TG}). That is, we define%
\begin{equation*}
d^{G\backslash }l=p_{\ ^{1}T^{\ast }TG}\circ \text{ }^{1}dl:\mathfrak{g}%
\rightarrow \mathfrak{z}_{l}:\xi \rightarrow \left( ad_{\xi }^{\ast }\frac{%
\delta l}{\delta \xi },\xi ,\frac{\delta l}{\delta \xi }\right) ,
\end{equation*}%
where we choose $g=e$ without loss of generality. Applying the inverse $%
\varkappa ^{-1}$ of the symplectic diffeomorphism $\varkappa :\mathfrak{z}%
_{d}\rightarrow \mathfrak{z}_{l}$ in Eq.(\ref{reddiff1}), we define
Lagrange-Dirac derivative%
\begin{equation}
\mathfrak{d}l=\varkappa ^{-1}\circ d^{G\backslash }l=\varkappa
^{-1}\circ
p_{\ ^{1}T^{\ast }TG}\circ \text{ }^{1}dl:\mathfrak{\mathfrak{g}\rightarrow z%
}_{d}:\xi \rightarrow \left( ad_{\xi }^{\ast }\frac{\delta l}{\delta \xi },%
\frac{\delta l}{\delta \xi },\xi \right) .  \label{LagDir}
\end{equation}

\begin{proposition}
The image of Lagrange-Dirac derivative $\mathfrak{d}l$, in Eq.(\ref{LagDir}%
), is a Lagrangian submanifold $s_{\mathfrak{z}_{d}}$ of $\left( \mathfrak{z}%
_{d},\Omega _{\mathfrak{z}_{d}}\right) $ defining the
Euler-Poincar\'{e} equations (\ref{EPEq}).

\begin{proof}
Since, the trivialization map $tr_{T^{\ast }T^{\ast }G}^{1}$ is
symplectic, the image of $^{1}dl$ is a Lagrangian submanifold of
$^{1}T^{\ast }TG$. The
projection $p_{\ ^{1}T^{\ast }TG}$ is symplectic, and hence the image of $%
d^{G\backslash }l$ is a Lagrangian submanifold $s_{\mathfrak{z}_{l}}$ of $%
\mathfrak{z}_{l}$. The inverse symplectic diffeomorphism $\varkappa
^{-1}$ maps this Lagrangian submanifold $s_{\mathfrak{z}_{l}}$ to a
Lagrangian
submanifold $s_{\mathfrak{z}_{d}}$ of $\mathfrak{z}_{d}$. So, the image $s_{%
\mathfrak{z}_{d}}$ of $\mathfrak{d}l$ is a Lagrangian submanifold of
$\left( \mathfrak{z}_{d},\Omega _{\mathfrak{z}_{d}}\right) $. Under
the global trivialization, $s_{\mathfrak{z}_{d}}$ is obtained to be
\begin{equation}
s_{\mathfrak{z}_{d}}=\left\{ \left( ad_{\xi }^{\ast }\frac{\delta
l}{\delta \xi },\frac{\delta l}{\delta \xi },\xi \right) \in
\mathfrak{z}_{d}:\xi \in \mathfrak{\mathfrak{g}}\right\} .
\label{szd}
\end{equation}%
When $\bar{L}=l\left( \xi \right) $, the Lagrangian submanifold
$S_{\ ^{1}TT^{\ast }G}$ in Eq.(\ref{Lag1}) reduces to
\begin{equation*}
s_{\ ^{1}TT^{\ast }G}=\left\{ \left( e,\frac{\delta l}{\delta \xi
},\xi ,0\right) \in \ ^{1}TT^{\ast }G:\xi \in
\mathfrak{\mathfrak{g}}\right\} ,
\end{equation*}%
and the first definition in Eq.(\ref{redDiff}) shows that the projection of $%
s_{\ ^{1}TT^{\ast }G}$ by $p_{\ ^{1}TT^{\ast }G}$ is
$s_{\mathfrak{z}_{d}}$. The reconstruction mapping$\ ^{1}TT^{\ast
}G\rightarrow T\left( G\circledS \mathfrak{g}^{\ast }\right) $ in
Eq.(\ref{recons}) takes $s_{\ ^{1}TT^{\ast }G}$ to the Lagrangian
submanifold
\begin{equation*}
s_{T\left( G\circledS \mathfrak{g}^{\ast }\right) }=\left\{ \left( e,\frac{%
\delta l}{\delta \xi };\xi ,ad_{\xi }^{\ast }\frac{\delta l}{\delta \xi }%
\right) \in T\left( G\circledS \mathfrak{g}^{\ast }\right) :\xi \in
\mathfrak{\mathfrak{g}}\right\}
\end{equation*}%
of $T\left( G\circledS \mathfrak{g}^{\ast }\right) $, and this
defines Euler-Poincar\'{e} equations (\ref{EPEq}).
\end{proof}
\end{proposition}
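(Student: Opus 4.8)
The plan is to establish the statement in two parts: first show that $s_{\mathfrak{z}_d}=im(\mathfrak{d}l)$ is Lagrangian by exhibiting $\mathfrak{d}l$ as a composite of symplectic maps applied to a manifestly Lagrangian submanifold, and then identify $s_{\mathfrak{z}_d}$ with the Euler-Poincar\'{e} dynamics by transporting it through the reconstruction map of Eq.(\ref{recons}) into $T(G\circledS\mathfrak{g}^*)$.

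For the first part, I would start from the observation that, for $\bar{L}=l(\xi)$, the image $im(d\bar{L})$ of the differential is automatically a Lagrangian submanifold of $T^*(G\circledS\mathfrak{g})$, being the image of an exact one-form. Since the trivialization $tr_{T^*(G\circledS\mathfrak{g})}^1$ is a symplectic diffeomorphism (Section 4.1), it carries $im(d\bar{L})$ to the Lagrangian submanifold $im({}^1dl)$ of $(^1T^*TG,\Omega_{^1T^*TG})$, whose coordinate form is recorded in Eq.(\ref{1Dl}). Because $p_{^1T^*TG}$ in Eq.(\ref{pT*TG}) is a symplectic projection coming from Marsden-Weinstein reduction, and because $im({}^1dl)$ descends under the left $G$-action (as $l$ depends on $\xi$ alone), its image $s_{\mathfrak{z}_l}=im(d^{G\backslash}l)$ is a Lagrangian submanifold of $\mathfrak{z}_l$; here one selects the representative $g=e$ to obtain a well-defined coordinate expression. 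Finally, the symplectic diffeomorphism $\varkappa^{-1}:\mathfrak{z}_l\to\mathfrak{z}_d$ of Eq.(\ref{reddiff1}) maps $s_{\mathfrak{z}_l}$ to a Lagrangian submanifold $s_{\mathfrak{z}_d}$ of $(\mathfrak{z}_d,\Omega_{\mathfrak{z}_d})$, which by construction is precisely the image of $\mathfrak{d}l$ displayed in Eq.(\ref{szd}).

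For the second part, I would recognize $s_{\mathfrak{z}_d}$ as the reduction of the trivialized Lagrangian submanifold $S_{^1TT^*G}$ of Eq.(\ref{Lag1}). Specializing to $\bar{L}=l(\xi)$ collapses $S_{^1TT^*G}$ to $s_{^1TT^*G}=\{(e,\delta l/\delta\xi,\xi,0)\}$, and the first identity in Eq.(\ref{redDiff}) confirms that $p_{^1TT^*G}(s_{^1TT^*G})=s_{\mathfrak{z}_d}$, giving an independent check that the two descriptions agree. To read off the dynamics, I would push $s_{^1TT^*G}$ forward by the reconstruction map $(tr_{T(G\circledS\mathfrak{g}^*)}^1)^{-1}$ of Eq.(\ref{recons}); at $g=e$ this sends $(e,\delta l/\delta\xi,\xi,0)$ to the tangent vector with base point $(e,\delta l/\delta\xi)$ and fiber part $(\xi,ad_\xi^*(\delta l/\delta\xi))$ in $T(G\circledS\mathfrak{g}^*)$. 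Interpreting the fiber part as $(\dot{g},\dot{\mu})$ with $\mu=\delta l/\delta\xi$ yields $\dot{g}=\xi$ and $\dot{\mu}=ad_\xi^*(\delta l/\delta\xi)$, which is exactly the Euler-Poincar\'{e} equation (\ref{EPEq}).

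The main obstacle will be the bookkeeping across the chain of trivialization, reduction and diffeomorphism, ensuring the algebraic form in Eq.(\ref{szd}) emerges unambiguously; the symplecticity of each map is already in hand from Section 4.1 and Eq.(\ref{redDiff}), so this reduces to tracking coordinates. The only genuine subtlety is the reduction step $p_{^1T^*TG}$: since it quotients by the coadjoint orbit, one must fix the representative $g=e$ so that $d^{G\backslash}l$ is well defined, and then verify that the reconstruction to $T(G\circledS\mathfrak{g}^*)$ correctly restores the time-derivative reading that identifies the static submanifold $s_{\mathfrak{z}_d}$ with the evolution equation (\ref{EPEq}).
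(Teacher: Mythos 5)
Your proposal is correct and follows essentially the same route as the paper's own proof: establish that $s_{\mathfrak{z}_{d}}=im\left( \mathfrak{d}l\right) $ is Lagrangian by composing the symplectic trivialization, the symplectic reduction projection $p_{\ ^{1}T^{\ast }TG}$, and the symplectic diffeomorphism $\varkappa ^{-1}$, then identify the dynamics by specializing $S_{\ ^{1}TT^{\ast }G}$ to $\bar{L}=l\left( \xi \right) $ and pushing forward by the reconstruction map of Eq.(\ref{recons}). The extra details you supply (that $im\left( d\bar{L}\right) $ is Lagrangian as the image of an exact one-form, and the need to fix the representative $g=e$) are consistent elaborations of the same argument rather than a different approach.
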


Alternatively, the formulation that uses the de Rham exterior
derivative and the potential one-form $\chi _{2}$ in Eq.(\ref{chi2})
goes as follows.

\begin{proposition}
The identity
\begin{equation*}
\tau _{\mathfrak{z}_{d}}^{\ast }dl=\chi _{2}
\end{equation*}%
defines the Lagrangian submanifold $s_{\mathfrak{z}_{d}}$ in
Eq.(\ref{szd}),
hence the Euler-Poincar\'{e} equations (\ref{EPEq}). Here, $\tau _{\mathfrak{%
z}_{d}}$ is the projection $\mathfrak{z}_{d}\rightarrow
\mathfrak{g}$, $dl$ is the (de Rham) exterior derivative of $l$ on
$\mathfrak{g}$, and $\chi _{2} $ is the potential one-form in
Eq.(\ref{chi2}).

\begin{proof}
We compute the value of exact one-from $\tau
_{\mathfrak{z}_{d}}^{\ast
}dl=d\left( l\circ \tau _{\mathfrak{z}_{d}}\right) $ on a vector field $%
X_{\left( \eta ,\upsilon ,\zeta \right) }^{\mathfrak{z}_{d}}$ in Eq.(\ref%
{RedVF}). At a point\ $\left( Ad_{g^{-1}}^{\ast }\lambda ,\mu ,\xi
\right) ,$ we have
\begin{eqnarray*}
\left\langle \tau _{\mathfrak{z}_{d}}^{\ast }dl,X_{\left( \eta
,\upsilon ,\zeta \right) }^{\mathfrak{z}_{d}}\right\rangle \left(
Ad_{g^{-1}}^{\ast
}\lambda ,\mu ,\xi \right) &=&\left\langle dl,\left( \tau _{\mathfrak{z}%
_{d}}\right) _{\ast }X_{\left( \eta ,\upsilon ,\zeta \right) }^{\mathfrak{z}%
_{d}}\right\rangle \\
&=&\left\langle \frac{\delta l}{\delta \xi },\zeta +\left[ \xi ,\eta
\right]
\right\rangle \\
&=&\left\langle \frac{\delta l}{\delta \xi },\zeta \right\rangle
+\left\langle ad_{\xi }^{\ast }\frac{\delta l}{\delta \xi },\eta
\right\rangle ,
\end{eqnarray*}%
where $\left( \tau _{\mathfrak{z}_{d}}\right) _{\ast }X_{\left( \eta
,\upsilon ,\zeta \right) }^{\mathfrak{z}_{d}}$ is the push forward
of the vector field $X_{\left( \eta ,\upsilon ,\zeta \right)
}^{\mathfrak{z}_{d}}$ by the projection $\tau _{\mathfrak{z}_{d}}$
from $\mathfrak{z}_{d}$ to its
third factor $\mathfrak{g}$, that is, to the vector $\zeta +\left[ \xi ,\eta %
\right] $ in $T_{\xi }\mathfrak{g}\simeq \mathfrak{g}$. Equating this to $%
\left\langle \chi _{2},X_{\left( \eta ,\upsilon ,\zeta \right) }^{\mathfrak{z%
}_{d}}\right\rangle $ in Eq.(\ref{chi2}) gives the Lagrangian submanifold $%
s_{\mathfrak{z}_{d}}=im\left( \mathfrak{d}l\right) $ in Eq.(\ref{szd}) via%
\begin{equation*}
\lambda =ad_{\xi }^{\ast }\frac{\delta l}{\delta \xi }\text{ \ \ and \ }\mu =%
\frac{\delta l}{\delta \xi }
\end{equation*}%
in coordinates $\left( \lambda ,\mu ,\xi \right) $ of
$\mathfrak{z}_{d}$.
\end{proof}
\end{proposition}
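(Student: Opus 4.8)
The plan is to read the stated identity as an equality of one-forms on $\mathfrak{z}_{d}$ and to test both sides against the distinguished vector fields $X_{\left( \eta ,\upsilon ,\zeta \right) }^{\mathfrak{z}_{d}}$ of Eq.(\ref{RedVF}). These vector fields span $T_{z}\mathfrak{z}_{d}$ at every point $z=\left( Ad_{g^{-1}}^{\ast }\lambda ,\mu ,\xi \right) $: as $\eta $ runs over $\mathfrak{g}$ the first component $ad_{\eta }^{\ast }\circ Ad_{g^{-1}}^{\ast }\lambda $ sweeps out the full tangent space to the coadjoint orbit $\mathcal{O}_{\lambda }$, while $\upsilon \in \mathfrak{g}^{\ast }$ and $\zeta \in \mathfrak{g}$ independently fill the $\mathfrak{g}^{\ast }$- and $\mathfrak{g}$-factors. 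Hence agreement of the two covectors at $z$ is equivalent to their agreement on this whole family, and it suffices to compare them there.

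First I would compute the left-hand side. Writing $\tau _{\mathfrak{z}_{d}}^{\ast }dl=d\left( l\circ \tau _{\mathfrak{z}_{d}}\right) $ and using $\left\langle \tau _{\mathfrak{z}_{d}}^{\ast }dl,X\right\rangle =\left\langle dl,\left( \tau _{\mathfrak{z}_{d}}\right) _{\ast }X\right\rangle $, the key step is to push the test field forward by the projection onto the third factor. Since $\tau _{\mathfrak{z}_{d}}$ retains only the $\mathfrak{g}$-component, the pushforward of $X_{\left( \eta ,\upsilon ,\zeta \right) }^{\mathfrak{z}_{d}}$ is the vector $\zeta +\left[ \xi ,\eta \right] \in T_{\xi }\mathfrak{g}\simeq \mathfrak{g}$, so that
\[
\left\langle \tau _{\mathfrak{z}_{d}}^{\ast }dl,X_{\left( \eta ,\upsilon ,\zeta \right) }^{\mathfrak{z}_{d}}\right\rangle =\left\langle \frac{\delta l}{\delta \xi },\zeta +\left[ \xi ,\eta \right] \right\rangle =\left\langle \frac{\delta l}{\delta \xi },\zeta \right\rangle +\left\langle ad_{\xi }^{\ast }\frac{\delta l}{\delta \xi },\eta \right\rangle ,
\]
where the last equality uses that $ad_{\xi }^{\ast }$ is the linear dual of $ad_{\xi }$ together with $ad_{\xi }\eta =\left[ \xi ,\eta \right] $. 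Note that neither side carries a $\upsilon $-dependence, consistent with $\chi _{2}$ annihilating the $\mathfrak{g}^{\ast }$-direction.

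Setting this equal to $\left\langle \chi _{2},X_{\left( \eta ,\upsilon ,\zeta \right) }^{\mathfrak{z}_{d}}\right\rangle =\left\langle \lambda ,\eta \right\rangle +\left\langle \mu ,\zeta \right\rangle $ from Eq.(\ref{chi2}), and using that $\eta $ and $\zeta $ are free, I would match the two coefficients to obtain
\[
\lambda =ad_{\xi }^{\ast }\frac{\delta l}{\delta \xi },\qquad \mu =\frac{\delta l}{\delta \xi }.
\]
In the coordinates $\left( \lambda ,\mu ,\xi \right) $ these are precisely the equations cutting out $s_{\mathfrak{z}_{d}}$ of Eq.(\ref{szd}), i.e.\ the image of the Lagrange-Dirac derivative $\mathfrak{d}l$ of Eq.(\ref{LagDir}). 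Its Lagrangian character need not be re-derived: the preceding proposition already exhibits $s_{\mathfrak{z}_{d}}=im\left( \mathfrak{d}l\right) $ as a Lagrangian submanifold of $\left( \mathfrak{z}_{d},\Omega _{\mathfrak{z}_{d}}\right) $ reproducing the Euler-Poincar\'{e} equations (\ref{EPEq}), so invoking that result closes the argument. This detour is deliberate, since the left wing of the reduced triplet is not a special symplectic structure in the classical sense and the generic Lagrangian-submanifold guarantee of Eq.(\ref{LagSubSSS}) cannot simply be quoted.

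The main obstacle I anticipate is the pushforward bookkeeping. One must be careful that $\tau _{\mathfrak{z}_{d}}$ sees only the third component, so that the orbit label $\lambda $ (constant along $\mathcal{O}_{\lambda }$) never appears on the left-hand side; it emerges solely from matching the $\eta $-coefficient, which is exactly what forces the relation $\lambda =ad_{\xi }^{\ast }\left( \delta l/\delta \xi \right) $. Everything else is the routine identity relating $\left[ \xi ,\cdot \right] $ to $ad_{\xi }^{\ast }$ and the independence of the parameters $\eta $, $\zeta $ in the spanning family.
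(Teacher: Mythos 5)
Your proposal is correct and follows essentially the same route as the paper's own proof: you pair both sides with the test fields $X_{\left( \eta ,\upsilon ,\zeta \right) }^{\mathfrak{z}_{d}}$, push forward by $\tau _{\mathfrak{z}_{d}}$ to get $\zeta +\left[ \xi ,\eta \right] $, and match the $\eta$- and $\zeta$-coefficients against $\chi _{2}$ to recover $\lambda =ad_{\xi }^{\ast }\left( \delta l/\delta \xi \right) $ and $\mu =\delta l/\delta \xi $. Your added remarks on the spanning property of the test fields and the explicit appeal to the preceding proposition for the Lagrangian character are harmless elaborations of what the paper leaves implicit.
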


\subsection{Lie-Poisson dynamics as a Lagrangian submanifold}

Consider a Hamiltonian function $\bar{H}$ on $G\circledS
\mathfrak{g}^{\ast } $ and define $h:\mathfrak{g}^{\ast }\rightarrow
\mathbb{R}
$ by $\bar{H}=h\left( \mu \right) $. With the trivialized exterior derivative%
\begin{equation*}
-\ ^{1}dh:\mathfrak{g}^{\ast }\rightarrow \text{ }^{1}T^{\ast
}T^{\ast }G:\left( g,\mu \right) \rightarrow \left( g,\mu
,ad_{\frac{\delta h}{\delta \mu }}^{\ast }\mu ,-\frac{\delta
h}{\delta \mu }\right) ,
\end{equation*}%
and the projection $p_{\ ^{1}T^{\ast }T^{\ast }G}:$ $^{1}T^{\ast
}T^{\ast
}G\rightarrow \mathfrak{z}_{h}$ in Eq.(\ref{pT*T*G}), we define%
\begin{equation*}
-d^{G\backslash }h=p_{\ ^{1}T^{\ast }T^{\ast }G}\circ \ ^{1}d\left(
\mathfrak{-}h\right) :\mathfrak{g}^{\ast }\mathfrak{\rightarrow
z}_{h}:\mu
\rightarrow \left( ad_{\frac{\delta h}{\delta \mu }}^{\ast }\mu ,\mu ,-\frac{%
\delta h}{\delta \mu }\right)
\end{equation*}%
by choosing $g=e.$ Applying the inverse $\omega ^{\sharp }$ of the
symplectic diffeomorphism $\omega ^{\flat }$ in Eq.(\ref{reddiff2}),
we obtain the Hamilton-Dirac derivative
\begin{equation}
-\mathfrak{d}h=\omega ^{\sharp }\circ d^{G\backslash }\left( -h\right) :%
\mathfrak{\mathfrak{g}}^{\ast }\mathfrak{\rightarrow z}_{d}:\mu
\rightarrow
\left( ad_{\frac{\delta h}{\delta \mu }}^{\ast }\mu ,\mu ,\frac{\delta h}{%
\delta \mu }\right) .  \label{HamDir}
\end{equation}

\begin{proposition}
The image of the Hamilton-Dirac derivative $-\mathfrak{d}h$ is a
Lagrangian
submanifold $s_{\mathfrak{z}_{d}}^{\prime }$ of $\left( \mathfrak{z}%
_{d},\Omega _{\mathfrak{z}_{d}}\right) $ and it defines the
Lie-Poisson equations (\ref{LP}).

\begin{proof}
The image of $-\ ^{1}dh$ is a Lagrangian submanifold of $^{1}T^{\ast
}T^{\ast }G$ and $p_{\ ^{1}T^{\ast }T^{\ast }G}$ maps this
Lagrangian
submanifold to a Lagrangian submanifold $s_{\mathfrak{z}_{h}}^{\prime }$ of $%
\mathfrak{z}_{h}$. The musical isomorphism $\omega ^{\sharp }$ takes $s_{%
\mathfrak{z}_{h}}^{\prime }$ to the Lagrangian submanifold $s_{\mathfrak{z}%
_{d}}^{\prime }$ of $\left( \mathfrak{z}_{d},\Omega _{\mathfrak{z}%
_{d}}\right) $. Thus,
\begin{equation}
s_{\mathfrak{z}_{d}}^{\prime }=im\left( -\mathfrak{d}h\right)
=\omega ^{\sharp }\circ im\left( d^{G\backslash }\left( -h\right)
\right) =\omega
^{\sharp }\circ p_{\ ^{1}T^{\ast }T^{\ast }G}\circ im\left( -\text{ }%
^{1}dh\right) .  \label{szd2}
\end{equation}%
From Eqs. (\ref{pT*T*G}) and (\ref{pTT*G}) we have
\begin{equation*}
\omega ^{\sharp }\circ p_{\ ^{1}T^{\ast }T^{\ast }G}=p_{\
^{1}TT^{\ast }G}\circ \ ^{1}\Omega _{G\circledS \mathfrak{g}^{\ast
}}^{\sharp },
\end{equation*}%
where $^{1}\Omega _{G\circledS \mathfrak{g}^{\ast }}^{\sharp }$ is
the inverse of the isomorphism $^{1}\Omega _{G\circledS
\mathfrak{g}^{\ast
}}^{\flat }$ in Eq.(\ref{ohm1}). This implies that $s_{\mathfrak{z}%
_{d}}^{\prime }$ is the projection $p_{\ ^{1}TT^{\ast }G}\left( s_{\
^{1}TT^{\ast }G}^{\prime }\right) $ of the Lagrangian submanifold
\begin{equation*}
s_{\ ^{1}TT^{\ast }G}^{\prime }=\left\{ \left( g,\mu ;\frac{\delta
h}{\delta \mu },0\right) \in \ ^{1}TT^{\ast }G:\mu \in
\mathfrak{\mathfrak{g}}^{\ast }\right\}
\end{equation*}%
obtained from $S_{\ ^{1}TT^{\ast }G}^{\prime }$ in Eq.(\ref{Lag2})
by substituting $\bar{H}=h\left( \mu \right) $. The reconstruction
mapping$\ ^{1}TT^{\ast }G\rightarrow T\left( G\circledS
\mathfrak{g}^{\ast }\right) $ in the Eq.(\ref{recons}) takes $s_{\
^{1}TT^{\ast }G}^{\prime }$ to the Lagrangian submanifold
\begin{equation*}
s_{T\left( G\circledS \mathfrak{g}^{\ast }\right) }^{\prime
}=\left\{ \left(
g,\mu ;TR_{g}\frac{\delta h}{\delta \mu },ad_{\frac{\delta h}{\delta \mu }%
}^{\ast }\mu \right) \in T\left( G\circledS \mathfrak{g}^{\ast
}\right) :\mu \in \mathfrak{\mathfrak{g}}^{\ast }\right\}
\end{equation*}%
of $T\left( G\circledS \mathfrak{g}^{\ast }\right) $ which is the
Lie-Poisson equation (\ref{LPA}) equivalent to (\ref{LP}).
\end{proof}
\end{proposition}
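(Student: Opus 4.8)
The plan is to reproduce, on the Hamiltonian wing, the transport-of-Lagrangian-submanifolds argument already carried out for the Euler-Poincar\'{e} proposition. The backbone of the argument is that each arrow in the relevant chain --- the trivialization $tr^{1}_{T^{*}T^{*}G}$, the Marsden--Weinstein projection $p_{{}^{1}T^{*}T^{*}G}$, and the reduced musical isomorphism $\omega^{\sharp}$ --- is a symplectic map, so the image of a Lagrangian submanifold stays Lagrangian at every stage.

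First I would promote the reduced Hamiltonian $h$ on $\mathfrak{g}^{*}$ to $\bar{H}=h(\mu)$ on $G\circledS\mathfrak{g}^{*}$ and note that $im(-d\bar{H})$ is the canonical Lagrangian submanifold of $T^{*}(G\circledS\mathfrak{g}^{*})=T^{*}T^{*}G$. Since $tr^{1}_{T^{*}T^{*}G}$ is a symplectic diffeomorphism, the image $im(-{}^{1}dh)$ is a Lagrangian submanifold of ${}^{1}T^{*}T^{*}G$; in coordinates it is the set $(g,\mu,ad^{*}_{\delta h/\delta\mu}\mu,-\delta h/\delta\mu)$ obtained from $S'_{{}^{1}T^{*}T^{*}G}$ of Eq.(\ref{DH}) by setting $\delta\bar{H}/\delta g=0$. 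Applying the symplectic projection $p_{{}^{1}T^{*}T^{*}G}$ of Eq.(\ref{pT*T*G}) then yields a Lagrangian submanifold $s'_{\mathfrak{z}_{h}}$, and applying the inverse musical isomorphism $\omega^{\sharp}$ (the inverse of $\omega^{\flat}$ in Eq.(\ref{reddiff2})) transports it to a Lagrangian submanifold $s'_{\mathfrak{z}_{d}}$ of $(\mathfrak{z}_{d},\Omega_{\mathfrak{z}_{d}})$. Because this composite is by definition $-\mathfrak{d}h$, I obtain $s'_{\mathfrak{z}_{d}}=im(-\mathfrak{d}h)$, which is the first assertion.

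For the dynamical content I would use the intertwining identity $\omega^{\sharp}\circ p_{{}^{1}T^{*}T^{*}G}=p_{{}^{1}TT^{*}G}\circ{}^{1}\Omega^{\sharp}_{G\circledS\mathfrak{g}^{*}}$, read off by comparing the explicit projection formulas (\ref{pT*T*G}) and (\ref{pTT*G}) after inverting the musical isomorphisms. This identity identifies $s'_{\mathfrak{z}_{d}}$ with the image under $p_{{}^{1}TT^{*}G}$ of the unreduced Hamilton submanifold $S'_{{}^{1}TT^{*}G}$ of Eq.(\ref{Lag2}), specialized to $\bar{H}=h(\mu)$, namely $\{(g,\mu;\delta h/\delta\mu,0)\}$. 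Finally, feeding this through the reconstruction mapping $(tr^{1}_{T(G\circledS\mathfrak{g}^{*})})^{-1}$ of Eq.(\ref{recons}) produces the submanifold $\{(g,\mu;TR_{g}(\delta h/\delta\mu),ad^{*}_{\delta h/\delta\mu}\mu)\}$ of $T(G\circledS\mathfrak{g}^{*})$, which is precisely the graph of the Lie-Poisson vector field, i.e. the equations (\ref{LPA}) equivalent to (\ref{LP}).

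The step I expect to be the main obstacle is the verification of the intertwining identity $\omega^{\sharp}\circ p_{{}^{1}T^{*}T^{*}G}=p_{{}^{1}TT^{*}G}\circ{}^{1}\Omega^{\sharp}$, i.e. that the Marsden--Weinstein reduction genuinely intertwines the unreduced musical isomorphism ${}^{1}\Omega^{\flat}_{G\circledS\mathfrak{g}^{*}}$ of Eq.(\ref{ohm1}) with the reduced one $\omega^{\flat}$ of Eq.(\ref{reddiff2}). This is where the nonstandard choice of fiber coordinate $v=\lambda-ad^{*}_{\xi}\mu$ used to define the projections becomes essential: only with this choice do the three symplectic quotients align so that the reduced musical isomorphism acts simply by $\xi\mapsto-\xi$ in the last slot and the commuting square closes. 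Everything else --- the preservation of the Lagrangian property along the chain and the final coordinate reading of the dynamics --- is routine, being entirely parallel to the Euler-Poincar\'{e} computation already completed.
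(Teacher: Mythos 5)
Your proposal follows the paper's own proof essentially step for step: the same chain of symplectic maps (trivialization, Marsden--Weinstein projection, reduced musical isomorphism) establishing the Lagrangian property of $im(-\mathfrak{d}h)$, the same intertwining identity $\omega^{\sharp}\circ p_{\ ^{1}T^{\ast}T^{\ast}G}=p_{\ ^{1}TT^{\ast}G}\circ\ ^{1}\Omega_{G\circledS\mathfrak{g}^{\ast}}^{\sharp}$ read off from the projection formulas, and the same final reconstruction to recover the Lie-Poisson equations. It is correct and matches the paper's argument.
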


Alternatively, with exterior derivative and the potential one-form
$\chi _{1} $ in Eq.(\ref{chi1}), we have

\begin{proposition}
The equation
\begin{equation*}
-\pi _{\mathfrak{z}_{d}}^{\ast }dh=\chi _{1}
\end{equation*}%
defines the Lagrangian submanifold $s_{\mathfrak{z}_{d}}^{\prime }$ in Eq.(%
\ref{szd2}) and gives the Lie-Poisson equations (\ref{LP}). Here, $\pi _{%
\mathfrak{z}_{d}}$ is the projection $\mathfrak{z}_{d}\rightarrow \mathfrak{g%
}^{\ast }$, $dh$ is the (de Rham) exterior derivative of $h$ on $\mathfrak{g}%
^{\ast }$, and $\chi _{1}$ is the potential one-form in
Eq.(\ref{chi1}).

\begin{proof}
To prove this identity, we compute the value of exact one-from $\pi _{%
\mathfrak{z}_{d}}^{\ast }dh=d\left( h\circ \pi
_{\mathfrak{z}_{d}}\right) $
on a vector field $X_{\left( \eta ,\upsilon ,\zeta \right) }^{\mathfrak{z}%
_{d}}$ over $\mathfrak{z}_{d}$. At a point\ $\left(
Ad_{g^{-1}}^{\ast
}\lambda ,\mu ,\xi \right) ,$ we have%
\begin{eqnarray*}
\left\langle -\pi _{\mathfrak{z}_{d}}^{\ast }dh,X_{\left( \eta
,\upsilon ,\zeta \right) }^{\mathfrak{z}_{d}}\right\rangle \left(
Ad_{g^{-1}}^{\ast
}\lambda ,\mu ,\xi \right)  &=&-\left\langle dh,\left( \pi _{\mathfrak{z}%
_{d}}\right) _{\ast }X_{\left( \eta ,\upsilon ,\zeta \right) }^{\mathfrak{z}%
_{d}}\right\rangle  \\
&=&-\left\langle \frac{\delta h}{\delta \mu },\upsilon +ad_{\eta
}^{\ast
}\mu \right\rangle  \\
&=&-\left\langle \frac{\delta h}{\delta \mu },\upsilon \right\rangle
+\left\langle ad_{\frac{\delta h}{\delta \mu }}^{\ast }\mu ,\eta
\right\rangle ,
\end{eqnarray*}%
where $\left( \pi _{\mathfrak{z}_{d}}\right) _{\ast }X_{\left( \eta
,\upsilon ,\zeta \right) }^{\mathfrak{z}_{d}}$ is the push forward
of the vector field $X_{\left( \eta ,\upsilon ,\zeta \right)
}^{\mathfrak{z}_{d}}$ by the projection $\pi _{\mathfrak{z}_{d}}$
from $\mathfrak{z}_{d}$ to its second factor $\mathfrak{g}^{\ast }$,
that is to the dual vector $\upsilon
+ad_{\eta }^{\ast }\mu $ in $T_{\mu }\mathfrak{g}^{\ast }\simeq \mathfrak{g}%
^{\ast }$. Equating this to $\left\langle \chi _{1},X_{\left( \eta
,\upsilon ,\zeta \right) }^{\mathfrak{z}_{d}}\right\rangle $ in
Eq.(\ref{chi1}) defines the Lagrangian submanifold
$s_{\mathfrak{z}_{d}}^{\prime }=im\left( \mathfrak{d}h\right) $ in
Eq.(\ref{szd2}) given in coordinates $\left( \lambda ,\mu ,\xi
\right) $ of $\mathfrak{z}_{d}$ by
\begin{equation*}
\lambda =ad_{\frac{\delta h}{\delta \mu }}^{\ast }\mu \text{ \ \ and \ }\xi =%
\frac{\delta h}{\delta \mu }.
\end{equation*}
\end{proof}
\end{proposition}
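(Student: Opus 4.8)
The plan is to mirror the proof of the preceding proposition, exchanging the roles of the two legs of the reduced triplet (there $\tau_{\mathfrak{z}_d}$ and $\chi_2$, here $\pi_{\mathfrak{z}_d}$ and $\chi_1$). First I would evaluate the exact one-form $-\pi_{\mathfrak{z}_d}^{\ast}dh = -d(h\circ\pi_{\mathfrak{z}_d})$ on a generic reduced right-invariant vector field $X_{(\eta,\upsilon,\zeta)}^{\mathfrak{z}_d}$ of the form in Eq.(\ref{RedVF}). Because $\pi_{\mathfrak{z}_d}$ projects $\mathfrak{z}_d$ onto its second factor $\mathfrak{g}^{\ast}$ (Eq.(\ref{pizd})), the pushforward $(\pi_{\mathfrak{z}_d})_{\ast}X_{(\eta,\upsilon,\zeta)}^{\mathfrak{z}_d}$ is just the middle component of Eq.(\ref{RedVF}), namely the covector $\upsilon+ad_{\eta}^{\ast}\mu\in T_{\mu}\mathfrak{g}^{\ast}\simeq\mathfrak{g}^{\ast}$. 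Pairing with $dh$ then yields $-\langle\frac{\delta h}{\delta\mu},\upsilon+ad_{\eta}^{\ast}\mu\rangle$.

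The key algebraic step is to convert the term $\langle\frac{\delta h}{\delta\mu},ad_{\eta}^{\ast}\mu\rangle$ into a pairing that is linear in $\eta$. Using the defining duality of the infinitesimal coadjoint action, $\langle ad_{\eta}^{\ast}\mu,\frac{\delta h}{\delta\mu}\rangle=\langle\mu,[\eta,\frac{\delta h}{\delta\mu}]\rangle=-\langle ad_{\frac{\delta h}{\delta\mu}}^{\ast}\mu,\eta\rangle$, so that the whole expression becomes $-\langle\frac{\delta h}{\delta\mu},\upsilon\rangle+\langle ad_{\frac{\delta h}{\delta\mu}}^{\ast}\mu,\eta\rangle$. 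This is now written purely in terms of the free Lie-algebra parameters $\eta\in\mathfrak{g}$ and $\upsilon\in\mathfrak{g}^{\ast}$, with the third parameter $\zeta$ absent.

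I would then equate this with the value of $\chi_1$ given in Eq.(\ref{chi1}), namely $\langle\chi_1,X_{(\eta,\upsilon,\zeta)}^{\mathfrak{z}_d}\rangle=\langle\lambda,\eta\rangle-\langle\upsilon,\xi\rangle$. Since $\eta$ and $\upsilon$ range freely, matching their coefficients forces $\lambda=ad_{\frac{\delta h}{\delta\mu}}^{\ast}\mu$ and $\xi=\frac{\delta h}{\delta\mu}$. In the coordinates $(\lambda,\mu,\xi)$ of $\mathfrak{z}_d$ these two relations cut out exactly the image $s_{\mathfrak{z}_d}^{\prime}=im(-\mathfrak{d}h)$ recorded in Eq.(\ref{szd2}). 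Reading $\lambda$ as the velocity $d\mu/dt$ along the reduced dynamics, the first relation is precisely the Lie-Poisson equation (\ref{LP}).

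I expect the main difficulty to be bookkeeping rather than conceptual: one must carry the paper's right-action coadjoint convention (Eqs.(\ref{dist*}) and (\ref{Adtoad})) correctly through the duality manipulation so that the signs in $ad_{\frac{\delta h}{\delta\mu}}^{\ast}\mu$ come out right, and verify that the $\zeta$-component genuinely drops out so the defining equations involve only $(\lambda,\mu,\xi)$. A secondary point of care, exactly as in the $\chi_2$ computation, is that $\langle\lambda,\eta\rangle$ is treated as constant for fixed $\lambda$ when the exact one-form $-d(h\circ\pi_{\mathfrak{z}_d})$ is evaluated on the bracket of vector fields.
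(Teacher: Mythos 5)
Your proposal is correct and follows essentially the same route as the paper's proof: evaluate $-\pi_{\mathfrak{z}_{d}}^{\ast}dh$ on a reduced right-invariant vector field, identify the pushforward as the second component $\upsilon+ad_{\eta}^{\ast}\mu$, transfer the coadjoint term onto $\eta$ via duality, and match coefficients against $\chi_{1}$ to obtain $\lambda=ad_{\delta h/\delta\mu}^{\ast}\mu$ and $\xi=\delta h/\delta\mu$. The only difference is that you make explicit the duality manipulation $\langle ad_{\eta}^{\ast}\mu,\delta h/\delta\mu\rangle=-\langle ad_{\delta h/\delta\mu}^{\ast}\mu,\eta\rangle$, which the paper performs silently, and your sign bookkeeping is consistent with the paper's right-action conventions.
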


\subsection{Legendre Transformation for Reduced Dynamics}

Being cotangent bundles, $T^{\ast }\mathfrak{g}=\mathfrak{\mathfrak{g}}%
\times \mathfrak{\mathfrak{g}^{\ast }}$ and $T^{\ast }\mathfrak{g}^{\ast }=%
\mathfrak{\mathfrak{g}}^{\ast }\times \mathfrak{\mathfrak{g}}$ are
canonically symplectic. It is possible to embed $T^{\ast }\mathfrak{g}$ and $%
T^{\ast }\mathfrak{g}^{\ast }$ symplectically into the total space $%
\mathfrak{z}_{d}$
\begin{eqnarray}
\hat{\varkappa} &:&T^{\ast }\mathfrak{g}\rightarrow
\mathfrak{z}_{d}:\left( \xi ,\mu \right) \rightarrow \left( ad_{\xi
}^{\ast }\mu ,\mu ,\xi \right) ,
\label{chis} \\
\hat{\omega} &:&T^{\ast }\mathfrak{g}^{\ast }\mathfrak{\rightarrow z}%
_{d}:\left( \mu ,\xi \right) \rightarrow \left( ad_{\xi }^{\ast }\mu
,\mu ,\xi \right) .  \label{ohmh}
\end{eqnarray}%
The following proposition shows how to define Lagrangian submanifold $%
im\left( \mathfrak{d}l\right) =s_{\mathfrak{z}_{d}}$ in
Eq.(\ref{szd}) from the right wing (that is from the Hamiltonian
side) of the reduced Tulczyjew's triplet (\ref{RTT}).

\begin{proposition}
The Lagrangian dynamics determined by the Lagrangian submanifold $s_{%
\mathfrak{z}_{d}}$ in Eq.(\ref{szd}) is generated by the Morse
family
\begin{equation}
E^{l\rightarrow h}=\left( l\circ \tau _{\mathfrak{z}_{d}}\right)
+\Delta =l\left( \xi \right) -\left\langle \mu ,\xi \right\rangle
\label{RedLeg}
\end{equation}%
on the bundle $\mathfrak{\mathfrak{g}}\times \mathfrak{\mathfrak{g}^{\ast }}%
\rightarrow \mathfrak{\mathfrak{g}^{\ast }}$. Here, $\Delta $ is a
real
valued function on $\mathfrak{\mathfrak{g}}\times \mathfrak{\mathfrak{g}%
^{\ast }}$ obtained from the equation%
\begin{equation*}
\chi _{2}-\chi _{1}=d\Delta =d\left\langle \mu ,\xi \right\rangle
\end{equation*}%
where $\chi _{1}$ and $\chi _{2}$ are given in Eqs.(\ref{chi1}) and (\ref%
{chi2}), respectively.

\begin{proof}
In the trivialized cases, the Legendre transformations have been
achieved by Morse families on the trivialized Pontryagin bundle
$G\circledS \left( \mathfrak{\mathfrak{g}}\times
\mathfrak{\mathfrak{g}^{\ast }}\right) $. For the reduced dynamics,
due to the invariance under the group action $G$, the Morse families
will be defined on $G\backslash \left( G\circledS \left(
\mathfrak{\mathfrak{g}}\times \mathfrak{\mathfrak{g}^{\ast }}\right)
\right) \simeq \mathfrak{\mathfrak{g}}\times
\mathfrak{\mathfrak{g}^{\ast }}$.
Recall the generating object%
\begin{equation*}
\xymatrix{\mathfrak{z}_{d} \ar[drr]^{\pi
_{\mathfrak{z}_{d}}}&&\mathfrak{g}^{\ast
}\times\mathfrak{g}\ar[d]\ar[ll]_{\hat{\omega}}\\&&\mathfrak{g}^{\ast
}}
\end{equation*}
where $\hat{\omega}$ is the embedding in Eq.(\ref{ohmh}). According
to general theory of generating families (c.f. Eq.(\ref{LagSub})),
the Morse
family $E^{l\rightarrow h}$ generates a Lagrangian submanifold $s_{T^{\ast }%
\mathfrak{g}^{\ast }}$ of $T^{\ast }\mathfrak{g}^{\ast }$ given by%
\begin{equation}
s_{T^{\ast }\mathfrak{g}^{\ast }}=\left\{ \left( \mu ,\xi \right)
\in T^{\ast }\mathfrak{g}^{\ast }:T^{\ast }\pi
_{\mathfrak{z}_{d}}\left( \mu ,\xi \right) =dE^{l\rightarrow
h}\left( \mu ,\xi \right) \right\} .
\end{equation}%
Explicitly, the Lagrangian submanifold $s_{T^{\ast
}\mathfrak{g}^{\ast }}$ consists of two-tuples $\left( \delta
l/\delta \xi ,\xi \right) $. Hence, the image of $s_{T^{\ast
}\mathfrak{g}^{\ast }}$ under the map $\hat{\omega}$ is
$s_{\mathfrak{z}_{d}}$. When the Lagrangian $l$ is not regular then
it is
not possible to define a function $h$ on $\mathfrak{g}^{\ast }$ generating $%
s_{\mathfrak{z}_{d}}$. In this case, we only have Morse family $%
E^{l\rightarrow h}$.
\end{proof}
\end{proposition}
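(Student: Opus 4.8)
The plan is to feed $E^{l\rightarrow h}$ into the generating-family machinery of Section~2, taking $\mathfrak{g}^{\ast}\times\mathfrak{g}\rightarrow\mathfrak{g}^{\ast}$ for the bundle $(\mathcal{P},\pi^{\mathcal{P}}_{\mathcal{M}},\mathcal{M})$ with base coordinate $\mu\in\mathfrak{g}^{\ast}$ and fibre coordinate $\xi\in\mathfrak{g}$, and then to transport the Lagrangian submanifold it generates in $T^{\ast}\mathfrak{g}^{\ast}$ into $\mathfrak{z}_{d}$ through the embedding $\hat{\omega}$ of Eq.(\ref{ohmh}), exactly as the trivialized Legendre transformation was carried out on the Pontryagin bundle. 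Before any computation I would record that the triangle in the statement commutes, $\pi_{\mathfrak{z}_{d}}\circ\hat{\omega}=\mathrm{pr}_{\mathfrak{g}^{\ast}}$, since $\hat{\omega}(\mu,\xi)=(ad_{\xi}^{\ast}\mu,\mu,\xi)$ and $\pi_{\mathfrak{z}_{d}}$ reads off the middle slot; this secures the intrinsic base-point requirement attached to Eq.(\ref{LagSub}) under the identification $T^{\ast}\mathfrak{g}^{\ast}\simeq\mathfrak{g}^{\ast}\times\mathfrak{g}$.

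Next I would differentiate $E^{l\rightarrow h}(\mu,\xi)=l(\xi)-\langle\mu,\xi\rangle$, obtaining the fibre derivative $\delta E^{l\rightarrow h}/\delta\xi=\delta l/\delta\xi-\mu$ and the base derivative $\delta E^{l\rightarrow h}/\delta\mu=-\xi$. The decisive point, which I would verify first, is that the mixed second derivative supplied by the pairing term $-\langle\mu,\xi\rangle$ is $\partial^{2}E^{l\rightarrow h}/\partial\mu\,\partial\xi=-\mathrm{Id}$, an isomorphism; hence the matrix in the rank test of Eq.(\ref{MorseReq}) has maximal rank for every $l$, whether or not $\delta^{2}l/\delta\xi^{2}$ is degenerate. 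This is exactly why $E^{l\rightarrow h}$ qualifies as a Morse family, and why the construction survives the non-hyperregular case in which no Hamiltonian $h$ on $\mathfrak{g}^{\ast}$ exists.

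I would then read off the generated submanifold $s_{T^{\ast}\mathfrak{g}^{\ast}}$ from Eq.(\ref{LagSub}): the fibre equation $0=\delta l/\delta\xi-\mu$ pins the base point to $\mu=\delta l/\delta\xi$, while the base component provides the conjugate element $\xi\in\mathfrak{g}\simeq T^{\ast}_{\mu}\mathfrak{g}^{\ast}$, so $s_{T^{\ast}\mathfrak{g}^{\ast}}$ is the set of two-tuples $(\delta l/\delta\xi,\xi)$. Applying $\hat{\omega}$ sends each such tuple to $(ad_{\xi}^{\ast}(\delta l/\delta\xi),\delta l/\delta\xi,\xi)$, which is precisely $s_{\mathfrak{z}_{d}}$ of Eq.(\ref{szd}) and therefore encodes the Euler-Poincaré equations (\ref{EPEq}). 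The identity $\chi_{2}-\chi_{1}=d\langle\mu,\xi\rangle$ of Eq.(\ref{delta}) is what legitimizes writing $E^{l\rightarrow h}=l\circ\tau_{\mathfrak{z}_{d}}+\Delta$, so this invocation merely repackages the general recipe of Eq.(\ref{energy}) in the reduced setting.

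The step I expect to be the real obstacle is justifying that $\hat{\omega}$ is a symplectic embedding, for only then does it carry Lagrangian submanifolds of the canonical $T^{\ast}\mathfrak{g}^{\ast}$ to Lagrangian submanifolds of $(\mathfrak{z}_{d},\Omega_{\mathfrak{z}_{d}})$. Concretely I would verify $\hat{\omega}^{\ast}\chi_{1}=\theta_{T^{\ast}\mathfrak{g}^{\ast}}$, equivalently $\hat{\omega}^{\ast}\Omega_{\mathfrak{z}_{d}}=\Omega_{T^{\ast}\mathfrak{g}^{\ast}}$, by evaluating $\chi_{1}$ of Eq.(\ref{chi1}) on the $\hat{\omega}$-images of the coordinate vector fields of $T^{\ast}\mathfrak{g}^{\ast}$; the subtlety is that along the image the first slot is constrained by $\lambda=ad_{\xi}^{\ast}\mu$, so one must check that the coadjoint-orbit term $\langle\lambda,[\eta,\bar{\eta}]\rangle$ in $\Omega_{\mathfrak{z}_{d}}$ of Eq.(\ref{Ohmzd}) recombines with the remaining pairings to reproduce exactly the canonical two-form. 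Granting this, the rest is immediate: hyperregularity of $l$ permits inverting $\mu=\delta l/\delta\xi$ to reduce $E^{l\rightarrow h}$ to a genuine Hamiltonian, whereas in the degenerate case the Morse family is the sole available generator, which I would record as the closing remark.
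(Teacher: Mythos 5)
Your proposal follows essentially the same route as the paper's proof: apply the generating-family construction of Eq.(\ref{LagSub}) to $E^{l\rightarrow h}$ on the bundle $\mathfrak{g}^{\ast}\times\mathfrak{g}\rightarrow\mathfrak{g}^{\ast}$, read off from the fibre equation $0=\delta l/\delta\xi-\mu$ and the base derivative the Lagrangian submanifold of two-tuples $\left(\delta l/\delta\xi,\xi\right)$ in $T^{\ast}\mathfrak{g}^{\ast}$, and push it into $\mathfrak{z}_{d}$ by $\hat{\omega}$ to recover $s_{\mathfrak{z}_{d}}$ of Eq.(\ref{szd}). The only difference is that you explicitly verify the maximal-rank condition of Eq.(\ref{MorseReq}) and propose checking that $\hat{\omega}$ is a symplectic embedding, points the paper asserts without proof (the latter in the sentence introducing Eqs.(\ref{chis}) and (\ref{ohmh})); these additions are sound but do not alter the argument.
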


\begin{remark}
Recall that, the Morse family $E^{\bar{L}\rightarrow \bar{H}}$,
defined in Eq.(\ref{Morse1}), is also a generating family for
Euler-Poincar\'{e} equations. Here, the function
$E^{\bar{L}\rightarrow \bar{H}}$ is defined on
the Pontryagin bundle $^{1}PG$ with base manifold $G\circledS \mathfrak{g}%
^{\ast }$ whereas $E^{l\rightarrow h}$, in Eq.(\ref{RedLeg}), is defined on $%
T^{\ast }\mathfrak{\mathfrak{g}}^{\ast }\mathfrak{\ }$with cotangent
bundle projection.
\end{remark}

Now, we will establish the inverse Legendre transformation. The
Hamiltonian dynamics is defined by a Hamiltonian function $h$ on
$\mathfrak{g}^{\ast }$. A Hamiltonian functional on
$\mathfrak{g}^{\ast }$ defines the Lagrangian submanifold
$s_{\mathfrak{z}_{d}}^{\prime }$, in Eq.(\ref{szd2}), of $\left(
\mathfrak{z}_{d},\Omega _{\mathfrak{z}_{d}}\right) $. The following
proposition shows how to generate $s_{\mathfrak{z}_{d}}^{\prime }$
using the
left wing (that is the Lagrangian side) of the reduced Tulczyjew's triplet (%
\ref{RTT}).

\begin{proposition}
The Morse family
\begin{equation}
E^{h\rightarrow l}=\Delta -h\left( \mu \right) =\left\langle \mu
,\xi \right\rangle -h\left( \mu \right)  \label{Ehl}
\end{equation}%
on the bundle $\mathfrak{\mathfrak{g}}\times \mathfrak{\mathfrak{g}^{\ast }}%
\rightarrow \mathfrak{\mathfrak{g}}$ generates the Lagrangian submanifold $%
s_{\mathfrak{z}_{d}}^{\prime }$ in Eq.(\ref{szd2}).

\begin{proof}
In this case, diagram is%
\begin{equation*}
\xymatrix{\mathfrak{g}^{\ast
}\times\mathfrak{g}\ar[d]\ar[rr]^{\hat{\varkappa}}&&\mathfrak{z}_{d}\ar[dll]^{\tau_{\mathfrak{z}_{d}}}
\\\mathfrak{g}}
\end{equation*}
where $\hat{\varkappa}$ is the embedding, in Eq.(\ref{chis}), of $T^{\ast }%
\mathfrak{g}$ into $\mathfrak{z}_{d}$. The Morse family
$E^{h\rightarrow l}$ in Eq.(\ref{Ehl}) generates a Lagrangian
submanifold
\begin{equation}
s_{T^{\ast }\mathfrak{g}}=\left\{ \left( \xi ,\mu \right) \in T^{\ast }%
\mathfrak{g}:T^{\ast }\tau _{\mathfrak{z}_{d}}\left( \xi ,\mu
\right) =dE^{h\rightarrow l}\left( \xi ,\mu \right) \right\} ,
\end{equation}%
of $T^{\ast }\mathfrak{g}$ (c.f. Eq.(\ref{LagSub})). This Lagrangian
submanifold consists of two-tuples $\left( \delta h/\delta \mu ,\mu \right) $%
. $\hat{\varkappa}$ maps $s_{T^{\ast }\mathfrak{g}}$ to $s_{\mathfrak{z}%
_{d}}^{\prime }.$ When the Hamiltonian $h$ is not regular then it is
not possible to find a Lagrangian function $l$ on $\mathfrak{g}$. In
this case, we only have Morse family $E^{h\rightarrow l}$.
\end{proof}
\end{proposition}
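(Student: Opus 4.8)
The plan is to treat this as the inverse (dual) of the Legendre construction that produced $s_{\mathfrak{z}_{d}}$ from $E^{l\rightarrow h}$, with the roles of $\mathfrak{g}$ and $\mathfrak{g}^{\ast }$ interchanged. Here the Morse family lives on the cotangent bundle $T^{\ast }\mathfrak{g}=\mathfrak{g}\times \mathfrak{g}^{\ast }$, with base $\mathfrak{g}$ carrying the coordinate $\xi $ and fiber $\mathfrak{g}^{\ast }$ carrying the coordinate $\mu $; the base projection is $\left( \xi ,\mu \right) \mapsto \xi $, which coincides with $\tau _{\mathfrak{z}_{d}}\circ \hat{\varkappa}$. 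The target is the Lagrangian submanifold $s_{\mathfrak{z}_{d}}^{\prime }=im\left( -\mathfrak{d}h\right) $ of Eq.(\ref{szd2}), i.e. the set of triples $\left( ad_{\delta h/\delta \mu }^{\ast }\mu ,\mu ,\delta h/\delta \mu \right) $ in the coordinates $\left( \lambda ,\mu ,\xi \right) $ of $\mathfrak{z}_{d}$ described in Eq.(\ref{HamDir}).

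First I would confirm that $E^{h\rightarrow l}=\left\langle \mu ,\xi \right\rangle -h\left( \mu \right) $ is a genuine Morse family on $T^{\ast }\mathfrak{g}\rightarrow \mathfrak{g}$ by checking the rank condition (\ref{MorseReq}). The variational derivatives are $\delta E^{h\rightarrow l}/\delta \xi =\mu $ and $\delta E^{h\rightarrow l}/\delta \mu =\xi -\delta h/\delta \mu $, so the block of second derivatives along the base, $\left( \delta ^{2}E^{h\rightarrow l}/\delta \xi \delta \xi ,\ \delta ^{2}E^{h\rightarrow l}/\delta \xi \delta \mu \right) =\left( 0,\ \mathrm{Id}\right) $, has maximal rank because the mixed term is the identity pairing between $\mathfrak{g}$ and $\mathfrak{g}^{\ast }$. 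This is exactly the situation where the fiber derivative is transverse to the zero section, so the construction of Eq.(\ref{LagSub}) applies.

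Next I would generate the Lagrangian submanifold $s_{T^{\ast }\mathfrak{g}}$ of $T^{\ast }\mathfrak{g}$ using the generating object
\begin{equation*}
\xymatrix{\mathfrak{g}^{\ast }\times\mathfrak{g}\ar[d]\ar[rr]^{\hat{\varkappa}}&&\mathfrak{z}_{d}\ar[dll]^{\tau_{\mathfrak{z}_{d}}}\\\mathfrak{g}}
\end{equation*}
The critical-fiber condition $\delta E^{h\rightarrow l}/\delta \mu =0$ forces $\xi =\delta h/\delta \mu $, while the covector over $\mathfrak{g}$ is read off as $\delta E^{h\rightarrow l}/\delta \xi =\mu $. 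Hence $s_{T^{\ast }\mathfrak{g}}$ is the set of two-tuples $\left( \delta h/\delta \mu ,\mu \right) \in T^{\ast }\mathfrak{g}$. Finally I would push this forward by the embedding $\hat{\varkappa}$ of Eq.(\ref{chis}); since $\hat{\varkappa}\left( \xi ,\mu \right) =\left( ad_{\xi }^{\ast }\mu ,\mu ,\xi \right) $, the image is $\left( ad_{\delta h/\delta \mu }^{\ast }\mu ,\mu ,\delta h/\delta \mu \right) $, which is precisely $s_{\mathfrak{z}_{d}}^{\prime }$.

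The step I expect to carry the real content is confirming that $\hat{\varkappa}$ is a \emph{symplectic} embedding, i.e. that $\hat{\varkappa}^{\ast }\Omega _{\mathfrak{z}_{d}}$ equals the canonical symplectic form of $T^{\ast }\mathfrak{g}$ — equivalently that $\hat{\varkappa}$ is compatible with the potential one-form $\chi _{2}$ and the exact correction $d\Delta =d\left\langle \mu ,\xi \right\rangle $ of Eq.(\ref{delta}) — for only then is the image of $s_{T^{\ast }\mathfrak{g}}$ again Lagrangian in $\left( \mathfrak{z}_{d},\Omega _{\mathfrak{z}_{d}}\right) $. This property is asserted above Eq.(\ref{chis}), which I would invoke. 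The algebra is routine; the genuine subtlety is the analytic regularity of the scheme when $G$ is infinite-dimensional, e.g. a diffeomorphism group, where the transversality underlying the Morse-family condition must be justified rather than treated formally.
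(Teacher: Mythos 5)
Your proposal is correct and follows essentially the same route as the paper: generate $s_{T^{\ast }\mathfrak{g}}$ from the Morse family via the fiber-critical condition $\xi =\delta h/\delta \mu $ with covector $\mu $, then push forward by the symplectic embedding $\hat{\varkappa}$ to land on $s_{\mathfrak{z}_{d}}^{\prime }=im\left( -\mathfrak{d}h\right) $. The only additions are your explicit check of the rank condition (\ref{MorseReq}) and the remark that $\hat{\varkappa}^{\ast }\Omega _{\mathfrak{z}_{d}}$ must be the canonical form on $T^{\ast }\mathfrak{g}$ — both of which the paper leaves implicit, the latter being asserted just above Eq.(\ref{chis}).
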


\begin{remark}
Recall that, the Morse family $E^{\bar{H}\rightarrow \bar{L}}$,
defined in
Eq.(\ref{Leg2}), is another generating family for Lie-Poisson dynamics. $E^{%
\bar{H}\rightarrow \bar{L}}$ is defined on the Pontryagin bundle
$^{1}PG$ with base manifold $G\circledS \mathfrak{g}$ whereas
$E^{l\rightarrow h}$, in Eq.(\ref{Ehl}), is defined on $T^{\ast
}\mathfrak{\mathfrak{g}\ }$with tangent bundle projection.
\end{remark}

To summarize, in order to use the classical formulations of
generating
objects, we employ the following reduced form of Tulczyjew's triplet%
\begin{equation*}
\xymatrix{T^{\ast
}\mathfrak{g}\ar[dr]\ar[rr]^{\hat{\varkappa}}&&\mathfrak{z}_{d}\ar[dl]^{\tau_{\mathfrak{z}_{d}}}\ar[dr]_{\pi_{\mathfrak{z}_{d}}}&&T^{\ast
}\mathfrak{g}{\ast }\ar[ll]_{\hat{\omega}}\ar[dl]
\\&\mathfrak{g}&&\mathfrak{g}^{\ast}&}
\end{equation*}
where we replace the total spaces $\mathfrak{z}_{l}$ and
$\mathfrak{z}_{h}$ by $T^{\ast }\mathfrak{g}$ and $T^{\ast
}\mathfrak{g}^{\ast }$,
respectively. The payoff is that the mappings $\hat{\varkappa}$ and $\hat{%
\omega}$ are symplectic embeddings but not isomorphisms.

\section{Example:\ Diffeomorphism Groups}

\subsection{Group Structure}

Group $\mathcal{D}$ of diffeomorphisms on $\mathcal{Q}$ is a Lie group (see for example \cite%
{Ba97, EsGu11, RaSc81}). Lie algebra of $\mathcal{D}$ is the space $\mathfrak{%
X}$ of vector fields on $\mathcal{Q}$. The (right) adjoint action $Ad$ of $%
\mathcal{D}$ on $\mathfrak{X}$ is given by the pull-back operation
$\varphi ^{\ast }X$, for $\varphi \in G$ and $X\in \mathfrak{X}$.
The infinitesimal adjoint action of an element $Y\in \mathfrak{X}$
on $X\in \mathfrak{X}$ is
the Lie derivative of $X$ in the direction of $Y$, that is $\mathcal{L}_{Y}X$%
. The tangent space
\begin{equation*}
T_{\varphi }\mathcal{D}=\left\{ X_{\varphi }:\mathcal{Q}\rightarrow T%
\mathcal{Q}:X_{\varphi }=X\circ \varphi \text{ for some }X\in \mathfrak{X}%
\right\}
\end{equation*}%
at $\varphi \in \mathcal{D}$ consists of material velocity fields.
The lifted group multiplication on the tangent bundle $T\mathcal{D}$
is
\begin{equation}
\varpi _{T\mathcal{D}}\left( X_{\varphi },Y_{\psi }\right)
=X_{\varphi \circ \psi }+T\varphi \circ Y_{\psi }.  \label{GrTGsu}
\end{equation}%
The right and the left trivializations of $T\mathcal{D}$ are
\begin{eqnarray}
tr_{T\mathcal{D}}^{R} &:&T\mathcal{D}\rightarrow \mathcal{D}\times \mathfrak{%
X}:X_{\varphi }\rightarrow \left( \varphi ,X\right)  \label{trRsu} \\
tr_{T\mathcal{D}}^{L} &:&T\mathcal{D}\rightarrow \mathcal{D}\times \mathfrak{%
X}:X_{\varphi }\rightarrow \left( \varphi ,\varphi ^{\ast }X\right)
.  \notag
\end{eqnarray}%
After choosing the right trivialization $tr_{T\mathcal{D}}^{R}$, we
arrive at the semidirect product group multiplication
\begin{equation}
\varpi _{\mathcal{D}\circledS \mathfrak{X}}\left( \left( \varphi
,X\right) ,\left( \psi ,Y\right) \right) =\left( \varphi \psi
,X+\varphi _{\ast }Y\right)  \label{GrTGSDsu}
\end{equation}%
on $\mathcal{D}\circledS \mathfrak{X}$. The Lie algebra of $\mathcal{D}%
\circledS \mathfrak{X}$ is $\mathfrak{X}\circledS \mathfrak{X}$ with
semi-direct product%
\begin{equation*}
\lbrack \left( X_{1},X_{2}\right) ,\left( Y_{1},Y_{2}\right) ]_{\mathfrak{X}%
\circledS \mathfrak{X}}=\left( \left[ X_{1},Y_{1}\right] ,\left[ X_{1},Y_{2}%
\right] -\left[ Y_{1},X_{2},\right] \right) .
\end{equation*}

The dual space $\mathfrak{X}^{\ast }\ $of the Lie algebra
$\mathfrak{X}$ is
the space $\Lambda ^{1}\left( \mathcal{Q}\right) \otimes Den\left( \mathcal{Q%
}\right) $ of one-form densities on $\mathcal{Q}$. The pairing
between $\mu \otimes d^{n}q\in \mathfrak{X}^{\ast }$ and $X\in
\mathfrak{X}$ is given by
the integration%
\begin{equation}
\left\langle \mu \otimes d^{n}q,X\right\rangle =\int_{\mathcal{M}%
}\left\langle \mu ,X\right\rangle _{\mathcal{Q}}d^{n}q,
\end{equation}%
where $d^{n}q$ is the top-form on $\mathcal{Q}$. The pairing inside
the
integral is the natural pairing of finite dimensional spaces $T_{q}\mathcal{Q%
}$ and $T_{q}^{\ast }\mathcal{Q}$. The coadjoint action $Ad^{\ast }$ of $%
\mathcal{D}$ on $\mathfrak{X}^{\ast }$ is the pull-back operation
$\varphi ^{\ast }\left( \mu \otimes d^{n}q\right) $ for $\varphi \in
\mathcal{D}$ and $\mu \otimes d^{n}q\in \mathfrak{X}^{\ast }$. The
infinitesimal coadjoint action $ad^{\ast }$ of an element $X\in
\mathfrak{X}$ on $\mu \otimes d^{n}q\in \mathfrak{X}^{\ast }$ is
minus the Lie derivative of $\mu \otimes
d^{n}q$ by $X$, that is%
\begin{equation}
ad_{X}^{\ast }:\mathfrak{X}^{\ast }\rightarrow \mathfrak{X}^{\ast
}:\mu \otimes d^{n}q\rightarrow -\left( \mathcal{L}_{X}\mu +\left(
div_{d^{n}q}X\right) \mu \right) \otimes d^{n}q.
\end{equation}%
Here, $div_{d^{n}q}X$ denotes divergence of the vector field $X$
with respect to the top-form $d^{n}q$. The cotangent space at
$\varphi $ is
\begin{equation*}
T_{\varphi }^{\ast }\mathcal{D}=\left\{ \left( \mu _{\varphi }:\mathcal{Q}%
\rightarrow T^{\ast }\mathcal{Q}\right) \otimes d^{n}q:\mu _{\varphi
}=\mu \circ \varphi \text{, }\mu \in \Lambda ^{1}\left(
\mathcal{Q}\right) \right\} .
\end{equation*}%
The pairing between $T_{\varphi }^{\ast }\mathcal{D}$ and $T_{\varphi }%
\mathcal{D}$ is taken to be the right invariant $L^{2}-$integral.
Cotangent lifts of right and left actions of $\mathcal{D}$ on
$T^{\ast }\mathcal{D}$ can be computed using
\begin{equation*}
T_{\varphi \circ \psi }^{\ast }R_{\psi ^{-1}}\left( \mu _{\varphi
}\right) =\mu _{\varphi \circ \psi }\text{, \ \ }T_{\psi \circ
\varphi }^{\ast
}L_{\psi ^{-1}}\mu _{\varphi }=T^{\ast }\psi ^{-1}\circ \mu _{\varphi }\text{%
,}
\end{equation*}%
respectively. The cotangent bundle $T^{\ast }\mathcal{D}$ is a Lie
group
with the group multiplication%
\begin{equation*}
\left( \mu _{\varphi },\nu _{\psi }\right) =\mu _{\varphi \circ \psi
}+T^{\ast }\varphi ^{-1}\circ \nu _{\psi }.
\end{equation*}%
The right and left trivializations of $T^{\ast }\mathcal{D}$ are
\begin{eqnarray}
tr_{T^{\ast }\mathcal{D}}^{R} &:&T^{\ast }\mathcal{D}\rightarrow \mathcal{D}%
\times \mathfrak{X}^{\ast }:\mu _{\varphi }\otimes d^{n}q\rightarrow
\left(
\varphi ,\mu \otimes d^{n}q\right)  \notag  \label{trT*} \\
tr_{T^{\ast }\mathcal{D}}^{L} &:&T^{\ast }\mathcal{D}\rightarrow \mathcal{D}%
\times \mathfrak{X}^{\ast }:\mu _{\varphi }\otimes d^{n}q\rightarrow
\left( \varphi ,\varphi ^{\ast }\mu \otimes \varphi ^{\ast
}d^{n}q\right) .
\end{eqnarray}%
We choose the right trivialization to arrive at the semi-direct
product group structure with multiplication
\begin{equation*}
\varpi _{\mathcal{D}\circledS \mathfrak{X}^{\ast }}\left( \left(
\varphi ,\mu \otimes d^{n}q\right) ,\left( \psi ,\nu \otimes
d^{n}q\right) \right) =\left( \varphi \circ \psi ,\left( \mu
+\varphi _{\ast }\nu \right) \otimes \varphi _{\ast }d^{n}q\right)
\end{equation*}%
on the trivialization $\mathcal{D}\circledS \mathfrak{X}^{\ast }$.

\subsection{The Trivialized Dynamics}

Let $\bar{L}=\bar{L}\left( \varphi ,X\right) $ be a Lagrangian density on $%
\mathcal{D}\circledS \mathfrak{X}$, the trivialized Euler-Lagrange
equations are
\begin{equation}
\frac{d}{dt}\frac{\delta \bar{L}}{\delta X}=\frac{\delta
\bar{L}}{\delta
\varphi }\circ \varphi ^{-1}-\mathcal{L}_{X}\frac{\delta \bar{L}}{\delta X}%
-\left( div_{d^{n}q}X\right) \frac{\delta \bar{L}}{\delta X}.
\label{tr-EL-Diff}
\end{equation}%
$\bar{L}$ generates a Lagrangian submanifold%
\begin{equation}
S_{^{1}TT^{\ast }\mathcal{D}}=\left( \varphi ,\frac{\delta \bar{L}}{\delta X}%
,X,\frac{\delta \bar{L}}{\delta \varphi }\circ \varphi ^{-1}\right)
\end{equation}%
of the trivialized Tulczyjew's symplectic space $^{1}TT^{\ast
}\mathcal{D}$
defined by the semi-direct product $\left( \mathcal{D}\circledS \mathfrak{X}%
^{\ast }\right) \circledS \left( \mathfrak{X}\circledS
\mathfrak{X}^{\ast }\right) $. Here, the trivialization map
\begin{equation}
T\left( \mathcal{D}\circledS \mathfrak{X}^{\ast }\right) \rightarrow \text{ }%
^{1}TT^{\ast }\mathcal{D}:\left( X_{\varphi },Y_{\mu }\right)
\rightarrow \left( \varphi ,\mu \otimes d^{n}q,X,Y_{\mu
}+\mathcal{L}_{X}\mu +\left( div_{d^{n}q}X\right) \mu \otimes
d^{n}q\right)   \label{tr-diff}
\end{equation}%
realizes the relation between the Lagrangian submanifold $S_{^{1}TT^{\ast }%
\mathcal{D}}$ and the trivialized Euler-Lagrange equation (\ref{tr-EL-Diff}%
). The Legendre transformation of trivialized Euler-Lagrange
equation can be achieved by the Morse family
\begin{equation*}
E\left( \varphi ,\mu \otimes d^{n}q,X\right) =\bar{L}\left( \varphi
,X\right) -\int_{\mathcal{Q}}\left\langle \mu ,X\right\rangle _{\mathcal{Q}%
}d^{n}q
\end{equation*}%
on the Pontryagin bundle $\mathcal{D}\circledS \left(
\mathfrak{X}^{\ast }\oplus \mathfrak{X}\right) $ over
$\mathcal{D}\circledS \mathfrak{X}^{\ast } $.

A right invariant vector field on $\mathcal{D}\circledS
\mathfrak{X}^{\ast }$
is given by%
\begin{equation*}
X_{\left( X,\nu \right) }^{\mathcal{D}\circledS \mathfrak{X}^{\ast
}}\left( \varphi ,\mu \right) =\left( X_{\varphi },\nu
-\mathcal{L}_{X}\mu -\left( div_{d^{n}q}X\right) \mu \otimes
d^{n}q\right) .
\end{equation*}%
At ($\varphi ,\mu \otimes d^{n}q$), the values of canonical one-form
$\theta
_{\mathcal{D}\circledS \mathfrak{X}^{\ast }}$\ and the symplectic two-form $%
\Omega _{\mathcal{D}\circledS \mathfrak{X}^{\ast }}$ on the right
invariant vector fields are
\begin{eqnarray*}
\left\langle \theta _{\mathcal{D}\circledS \mathfrak{X}^{\ast
}},X_{\left( X,\nu \right) }^{\mathcal{D}\circledS
\mathfrak{X}^{\ast }}\right\rangle &=&\int_{\mathcal{Q}}\left\langle
\mu ,X\right\rangle _{\mathcal{Q}}d^{n}q,
\\
\left\langle \Omega _{\mathcal{D}\circledS \mathfrak{X}^{\ast
}};\left( X_{\left( X,\nu \right) }^{\mathcal{D}\circledS
\mathfrak{X}^{\ast }},X_{\left( Y,\lambda \right)
}^{\mathcal{D}\circledS \mathfrak{X}^{\ast }}\right) \right\rangle
&=&\int_{\mathcal{Q}}\left\langle \nu
,Y\right\rangle _{\mathcal{Q}}-\left\langle \lambda ,X\right\rangle _{%
\mathcal{Q}}+\left\langle \mu ,\left[ X,Y\right] \right\rangle _{\mathcal{Q}%
}d^{n}q.
\end{eqnarray*}%
For a Hamiltonian function $\bar{H}$ on $\mathcal{D}\circledS \mathfrak{X}%
^{\ast }$, the trivialized Hamilton's equations are
\begin{equation}
\frac{d\varphi }{dt}=\left( \frac{\delta \bar{H}}{\delta \mu
}\right)
_{\varphi },\ \text{\ }\ \frac{d\mu }{dt}=-\mathcal{L}_{\frac{\delta \bar{H}%
}{\delta \mu }}\mu -\left( div_{d^{n}q}\frac{\delta \bar{H}}{\delta \mu }%
\right) \mu -\left( \frac{\delta \bar{H}}{\delta \varphi }\right)
\circ \varphi ^{-1},  \label{tr-Ham-Diff}
\end{equation}%
where, due to the reflexivity assumption, $\delta \bar{H}/\delta \mu
$ is
assumed to be a vector field in the Lie algebra, and $\left( \delta \bar{H}%
/\delta \mu \right) _{\varphi }$ is the material velocity field. The
Lagrangian submanifold generated by the Hamiltonian function
$\bar{H}$ is
\begin{equation*}
S_{^{1}TT^{\ast }\mathcal{D}}^{\prime }=\left( \varphi ,\mu \otimes d^{n}q,%
\frac{\delta \bar{H}}{\delta \mu },-\frac{\delta \bar{H}}{\delta \varphi }%
\circ \varphi ^{-1}\otimes d^{n}q\right) .
\end{equation*}%
To establish the link between $S_{^{1}TT^{\ast }\mathcal{D}}^{\prime
}$ and Eq.(\ref{tr-Ham-Diff}), we refer to the trivialization map
(\ref{tr-diff}).
The Legendre transformation of the Hamiltonian dynamics described by Eqs.(%
\ref{tr-Ham-Diff}) results from the Morse family
\begin{equation*}
E\left( \varphi ,\mu \otimes d^{n}q,X\right)
=\int_{\mathcal{Q}}\left\langle \mu ,X\right\rangle
_{\mathcal{Q}}d^{n}q-\bar{H}\left( \varphi ,\mu \right)
\end{equation*}%
on the Pontryagin bundle $\mathcal{D}\circledS \left(
\mathfrak{X}^{\ast }\oplus \mathfrak{X}\right) $ over
$\mathcal{D}\circledS \mathfrak{X}$.

\subsection{The Reduced Dynamics}

When the Lagrangian $\bar{L}$ is free of the group variable, we have $\bar{L}%
=l\left( X\right) $ and the trivialized Euler-Lagrange equations (\ref%
{tr-EL-Diff}) reduces to the Euler-Poincar\'{e} equations
\begin{equation}
\frac{d}{dt}\frac{\delta l}{\delta X}=-\mathcal{L}_{X}\frac{\delta
l}{\delta X}-\left( div_{d^{n}q}X\right) \frac{\delta l}{\delta X}.
\end{equation}%
Similarly, when the Hamiltonian depends on the fiber variable $\mu $ only, $%
\bar{H}\left( g,\mu \right) =h\left( \mu \right) $, the trivialized
Hamilton's equation (\ref{tr-Ham-Diff}) gives the Lie-Poisson equation%
\begin{equation}
\frac{d\mu }{dt}=-\mathcal{L}_{\frac{\delta h}{\delta \mu }}\mu
-\left( div_{d^{n}q}\frac{\delta h}{\delta \mu }\right) \mu .
\label{LP-Diff}
\end{equation}

In order to perform the Legendre transformations of the reduced
dynamics, we present them as Lagrangian submanifolds of the reduced
Tulczyjew's symplectic space $\mathcal{O}_{\lambda }\times
\mathfrak{X}^{\ast }\times \mathfrak{X}$. Here,
$\mathcal{O}_{\lambda }\times \mathfrak{X}^{\ast }\times
\mathfrak{X}$ is obtained by application of the Marsden-Weinstein
reduction to the trivialized Tulczyjew's symplectic space $^{1}TT^{\ast }%
\mathcal{D}$, that is

\begin{equation}
^{1}TT^{\ast }\mathcal{D}\rightarrow \mathcal{O}_{\lambda }\times \mathfrak{X%
}^{\ast }\times \mathfrak{X}:\left( \varphi ,\mu \otimes
d^{n}q,X,\nu \otimes d^{n}q\right) \rightarrow \left( \varphi _{\ast
}\left( \lambda \otimes d^{n}q\right) ,\mu \otimes d^{n}q,X\right) ,
\label{sr-Diff}
\end{equation}%
where $\lambda =\nu -\mathcal{L}_{X}\mu -\left( div_{d^{n}q}X\right)
\mu $. For a Lagrangian $l$ on $\mathfrak{X}$, image of the
Lagrange-Dirac derivative
\begin{equation*}
\mathfrak{d}l:\mathfrak{X}\rightarrow \mathcal{O}_{\lambda }\times \mathfrak{%
X}^{\ast }\times \mathfrak{X}:X\rightarrow \left( -\mathcal{L}_{X}\frac{%
\delta l}{\delta X}-\left( div_{d^{n}q}X\right) \frac{\delta l}{\delta X}%
\otimes d^{n}q,\frac{\delta l}{\delta X}\otimes d^{n}q,X\right)
\end{equation*}%
is a Lagrangian submanifold of $\mathcal{O}_{\lambda }\times \mathfrak{X}%
^{\ast }\times \mathfrak{X}$. The image $im\left(
\mathfrak{d}l\right) $ determines Euler-Poincar\'{e} equations. The
Legendre transformation is generated by the Morse family
\begin{equation*}
E^{l\rightarrow h}\left( \mu \otimes d^{n}q,X\right) =l\left(
X\right) -\int_{\mathcal{Q}}\left\langle \mu ,X\right\rangle
_{\mathcal{Q}}d^{n}q
\end{equation*}%
on the bundle $\mathfrak{X}^{\ast }\times \mathfrak{X}\rightarrow \mathfrak{X%
}^{\ast }$. Similarly, for a Hamiltonian $h$ on $\mathfrak{X}^{\ast
}$, the
image of Hamilton-Dirac derivative%
\begin{equation*}
-\mathfrak{d}h:\mathfrak{X}^{\ast }\rightarrow \mathcal{O}_{\lambda
}\times
\mathfrak{X}^{\ast }\times \mathfrak{X}:\mu \rightarrow \left( -\mathcal{L}_{%
\frac{\delta h}{\delta \mu }}\mu -\left( div_{d^{n}q}\frac{\delta
h}{\delta \mu }\right) \mu \otimes d^{n}q,\mu \otimes
d^{n}q,\frac{\delta h}{\delta \mu }\right)
\end{equation*}%
is a Lagrangian submanifold of $\mathcal{O}_{\lambda }\times \mathfrak{X}%
^{\ast }\times \mathfrak{X}$ and determines Lie-Poisson equations.
The inverse Legendre transformation of the Euler-Poincar\'{e}
dynamics is generated by the Morse family
\begin{equation}
E^{h\rightarrow l}\left( \mu \otimes d^{n}q,X\right) =\int_{\mathcal{Q}%
}\left\langle \mu ,X\right\rangle _{\mathcal{Q}}d^{n}q-h\left( \mu
\right)
\end{equation}%
on the bundle $\mathfrak{X}^{\ast }\times \mathfrak{X}\rightarrow \mathfrak{X%
}$.

\section{Summary, Discussions and Prospectives}

We obtain trivialized and reduced dynamics for Hamiltonian and
Lagrangian formulations of systems with configuration space $G$.
Following diagram summarizes these equations and their
representations by Lagrangian
submanifolds.%
\begin{equation*}
\begin{tabular}{|l|l|l|}
\hline & Dynamics & Lagrangian Submanifold \\ \hline
\begin{tabular}{l}
Trivialized \\
Euler-Lagrange \\
Equations \\
on $G\circledS \mathfrak{g}$%
\end{tabular}
& $\frac{d}{dt}\frac{\delta \bar{L}}{\delta \xi }=T_{e}^{\ast }R_{g}\frac{%
\delta \bar{L}}{\delta g}+ad_{\xi }^{\ast }\frac{\delta
\bar{L}}{\delta \xi }
$ & $\left\{ g,\frac{\delta \bar{L}}{\delta \xi },\xi ,T^{\ast }R_{g}\frac{%
\delta \bar{L}}{\delta g}\right\} \subset $ $^{1}TT^{\ast }G$ \\
\hline
\begin{tabular}{l}
Trivialized \\
Hamilton's \\
Equations \\
on $G\circledS \mathfrak{g}^{\ast }$%
\end{tabular}
&
\begin{tabular}{l}
$\frac{dg}{dt}=T_{e}R_{g}\left( \frac{\delta \bar{H}}{\delta \mu
}\right) ,$
\\
$\frac{d\mu }{dt}=ad_{\frac{\delta \bar{H}}{\delta \mu }}^{\ast }\mu
-T_{e}^{\ast }R_{g}\frac{\delta \bar{H}}{\delta g}$%
\end{tabular}
& $\left\{ g,\mu ,\frac{\delta \bar{H}}{\delta \mu },-T^{\ast }R_{g}\frac{%
\delta \bar{H}}{\delta g}\right\} \subset $ $^{1}TT^{\ast }G$ \\
\hline
\begin{tabular}{l}
Euler-Poincar\'{e} \\
Equations \\
on $\mathfrak{g}$%
\end{tabular}
& $ad_{\xi }^{\ast }\frac{\delta l}{\delta \xi }-\frac{d}{dt}\frac{\delta l}{%
\delta \xi }=0$ & $\left\{ ad_{\xi }^{\ast }\frac{\delta l}{\delta \xi },%
\frac{\delta l}{\delta \xi },\xi \right\} \subset \mathfrak{z}_{d}$
\\ \hline
\begin{tabular}{l}
Lie-Poisson \\
Equations \\
on $\mathfrak{g}^{\ast }$%
\end{tabular}
& $\frac{d\mu }{dt}=ad_{\frac{\delta h}{\delta \mu }}^{\ast }\mu $ & $%
\left\{ ad_{\frac{\delta h}{\delta \mu }}^{\ast }\mu ,\mu ,\frac{\delta h}{%
\delta \mu }\right\} \subset \mathfrak{z}_{d}$ \\ \hline
\end{tabular}%
\end{equation*}
We identify the following
Morse families for trivialized and reduced dynamics.
\begin{equation*}
\begin{tabular}{|l|c|c|}
\hline & Morse family & Bundle \\ \hline
\begin{tabular}{l}
Trivialized \\
Euler-Lagrange \\
Equations \\
on $G\circledS \mathfrak{g}$%
\end{tabular}
& $E^{\bar{L}\rightarrow \bar{H}}\left( g,\xi ,\mu \right)
=\bar{L}\left( g,\xi \right) -\left\langle \mu ,\xi \right\rangle $
& $G\circledS \left( \mathfrak{g}\times \mathfrak{g}^{\ast }\right)
\rightarrow G\circledS \mathfrak{g}^{\ast }$ \\ \hline
\begin{tabular}{l}
Trivialized \\
Hamilton's \\
Equations \\
on $G\circledS \mathfrak{g}^{\ast }$%
\end{tabular}
& $E^{\bar{H}\rightarrow \bar{L}}\left( g,\xi ,\mu \right)
=\left\langle \mu ,\xi \right\rangle -\bar{H}\left( g,\mu \right) $
& $G\circledS \left( \mathfrak{g}\times \mathfrak{g}^{\ast }\right)
\rightarrow G\circledS \mathfrak{g}$ \\ \hline
\begin{tabular}{l}
Euler-Poincar\'{e} \\
Equations \\
on $\mathfrak{g}$%
\end{tabular}
& $E^{l\rightarrow h}\left( \xi ,\mu \right) =l\left( \xi \right)
-\left\langle \mu ,\xi \right\rangle $ & $\mathfrak{g}\times \mathfrak{g}%
^{\ast }\rightarrow \mathfrak{g}^{\ast }$ \\ \hline
\begin{tabular}{l}
Lie-Poisson \\
Equations \\
on $\mathfrak{g}^{\ast }$%
\end{tabular}
& $E^{h\rightarrow l}\left( \xi ,\mu \right) =\left\langle \mu ,\xi
\right\rangle -h\left( \mu \right) $ & $\mathfrak{g}\times \mathfrak{g}%
^{\ast }\rightarrow \mathfrak{g}$. \\ \hline
\end{tabular}%
\end{equation*}

Obviously, the form of dynamical equations obtained in this work
depends on the trivialization we employed. What we refer to
trivialization of the first kind carries the group operations to
iterated bundles and contributes additional term due to semi-direct
product structures. Higher order dynamics on Lie groups with adapted
trivializations of higher order and iterated bundles as well as
their symplectic and Poisson reductions are under investigation
\cite{Es14}.

The reduction of Tulczyjew's symplectic space can be generalized to
symplectic reduction of tangent bundle of a symplectic manifold with
the lifted symplectic structure. This could be the first step toward
more general studies on the reduction of the special symplectic
structures and the reduction of Tulczyjew's triplet with
configuration manifold $\mathcal{Q} $.

Finally, we want to mention that the foremost example of degenerate
system that falls into application area of present formulation is
the Vlasov-Poisson equation of plasma dynamics which was, indeed,
the motivation for this work.

\end{document}